\documentclass[11pt,a4paper,reqno]{amsart}
\usepackage[english]{babel}
\usepackage[T1]{fontenc}
\usepackage{verbatim}
\usepackage{palatino}
\usepackage{amsmath}
\usepackage{mathabx}
\usepackage{amssymb}
\usepackage{amsthm}
\usepackage{amsfonts}
\usepackage{graphicx}
\usepackage{esint}
\usepackage{xcolor}
\usepackage{mathtools}
\usepackage{overpic}
\usepackage{enumerate}

\usepackage[colorlinks = true, linkcolor={red!80!black}, citecolor = {blue!60!black}]{hyperref}
\pagestyle{headings}
\author{Katrin F\"assler and Tuomas Orponen}
\title[Vertical projections]{Vertical projections in the Heisenberg group \\ via cinematic functions and point-plate incidences}
\address{Department of Mathematics and Statistics\\ University of Jyv\"askyl\"a,
P.O. Box 35 (MaD)\\
FI-40014 University of Jyv\"askyl\"a\\
Finland}  \email{katrin.s.fassler@jyu.fi}
\email{tuomas.t.orponen@jyu.fi}

\date{\today}
\subjclass[2010]{28A80 (primary) 28A78 (secondary)}
\keywords{Vertical projections, Heisenberg group, Hausdorff dimension, Incidences}
\thanks{K.F.\ is supported by the Academy of Finland via the project
\emph{Singular integrals, harmonic functions, and boundary
regularity in Heisenberg groups}, grant No.\ 321696. T.O.\ is
 supported by the Academy of Finland via the project \emph{Incidences on Fractals}, grant No.\ 321896. }

\newcommand{\R}{\mathbb{R}}
\newcommand{\W}{\mathbb{W}}
\newcommand{\He}{\mathbb{H}}
\newcommand{\N}{\mathbb{N}}

\newcommand{\Z}{\mathbb{Z}}
\newcommand{\tn}{\mathbb{P}}

\newcommand{\spt}{\operatorname{spt}}
\newcommand{\Hd}{\dim_{\mathrm{\mathbb{H}}}}

\newcommand{\E}{\mathbb{E}}

\newcommand{\spa}{\operatorname{span}}

\newcommand{\dist}{\operatorname{dist}}

\def\Barint_#1{\mathchoice
          {\mathop{\vrule width 6pt height 3 pt depth -2.5pt
                  \kern -8pt \intop}\nolimits_{#1}}%
          {\mathop{\vrule width 5pt height 3 pt depth -2.6pt
                  \kern -6pt \intop}\nolimits_{#1}}%
          {\mathop{\vrule width 5pt height 3 pt depth -2.6pt
                  \kern -6pt \intop}\nolimits_{#1}}%
          {\mathop{\vrule width 5pt height 3 pt depth -2.6pt
                  \kern -6pt \intop}\nolimits_{#1}}}

\numberwithin{equation}{section}

\theoremstyle{plain}
\newtheorem{thm}[equation]{Theorem}
\newtheorem*{"thm"}{"Theorem"}

\newtheorem{lemma}[equation]{Lemma}

\newtheorem{cor}[equation]{Corollary}
\newtheorem{proposition}[equation]{Proposition}

\theoremstyle{definition}

\newtheorem{definition}[equation]{Definition}

\theoremstyle{remark}
\newtheorem{remark}[equation]{Remark}

\addtolength{\hoffset}{-1.15cm}
\addtolength{\textwidth}{2.3cm}
\addtolength{\voffset}{0.45cm}
\addtolength{\textheight}{-0.9cm}

\newcommand{\nref}[1]{(\hyperref[#1]{#1})}

\DeclareMathSymbol{\intop}  {\mathop}{mathx}{"B3}

\begin{document}

\begin{abstract} Let $\{\pi_{e} \colon \mathbb{H} \to \mathbb{W}_{e} : e \in S^{1}\}$ be the family of vertical projections in the first Heisenberg group $\mathbb{H}$. We prove that if $K \subset \mathbb{H}$ is a Borel set with Hausdorff dimension $\dim_{\mathbb{H}} K \in [0,2] \cup \{3\}$, then 
$$ \dim_{\mathbb{H}} \pi_{e}(K) \geq \dim_{\mathbb{H}} K $$ 
for $\mathcal{H}^{1}$ almost every $e \in S^{1}$. This was known earlier if $\dim_{\mathbb{H}} K \in [0,1]$.

The proofs for $\dim_{\mathbb{H}} K \in [0,2]$ and $\dim_{\mathbb{H}} K = 3$ are based on different techniques. For $\dim_{\mathbb{H}} K \in [0,2]$, we reduce matters to a Euclidean problem, and apply the method of cinematic functions due to Pramanik, Yang, and Zahl. 

To handle the case $\dim_{\mathbb{H}} K = 3$, we introduce a point-line duality between horizontal lines and conical lines in $\mathbb{R}^{3}$. This allows us to transform the Heisenberg problem into a point-plate incidence question in $\R^{3}$. To solve the latter, we apply a Kakeya inequality for plates in $\R^{3}$, due to Guth, Wang, and Zhang. This method also yields partial results for Borel sets $K \subset \mathbb{H}$ with $\dim_{\mathbb{H}} K \in (5/2,3)$.
\end{abstract}

\maketitle

\tableofcontents

\section{Introduction}

Fix $e \in S^{1} \times \{0\} \subset \He$, and consider the \emph{vertical plane} $\mathbb{W}_{e} := e^{\perp}$ in the first Heisenberg group $\He$, see Section \ref{s:heis} for the definitions. Every point $p \in \He$ can be uniquely decomposed as $p = w \cdot v$, where
\begin{displaymath} w \in \mathbb{W}_{e} \quad \text{and} \quad v \in \mathbb{L}_{e} := \spa(e). \end{displaymath}
This decomposition gives rise to the \emph{vertical projection} $\pi_{e} := \pi_{\mathbb{W}_{e}} \colon \He \to \mathbb{W}_{e}$, defined by $\pi_{e}(p) := w$. A good way to visualise $\pi_{e}$ is to note that the fibres $\pi_{e}^{-1}\{w\}$, $w \in \W_{e}$, coincide with the horizontal lines $w \cdot \mathbb{L}_{e}$. These lines foliate $\mathbb{H}$, as $w$ ranges in $\mathbb{W}_{e}$, but are not parallel. Thus, the projections $\pi_{e}$ are non-linear maps with linear fibres. For example, in the special cases $e_{1} = (1,0,0)$ and $e_{2} = (0,1,0)$ we have the concrete formulae
\begin{equation}\label{projections} \pi_{e_{1}}(x,y,t) = \left( 0,y,t + \tfrac{xy}{2} \right) \quad \text{and} \quad \pi_{e_{2}}(x,y,t) = \left( x,0,t - \tfrac{xy}{2} \right). \end{equation}
From the point of view of geometric measure theory in the Heisenberg group, the vertical projections are the Heisenberg analogues of orthogonal projections to $(d - 1)$-planes in $\R^{d}$. One of the fundamental theorems concerning orthogonal projections in $\R^{d}$ is the \emph{Marstrand-Mattila projection theorem} \cite{Mar,MR0409774}: if $K \subset \R^{d}$ is a Borel set, then
\begin{equation}\label{mm} \dim_{\mathrm{E}} \pi_{V}(K) = \min\{\dim_{\mathrm{E}} K,d - 1\} \end{equation}
for almost all $(d - 1)$-planes $V \subset \mathbb{R}^{d}$. Here $\dim_{\mathrm{E}}$ refers to Hausdorff dimension in Euclidean space -- in contrast to the notation "$\Hd$" which will refer to Hausdorff dimension in the Heisenberg group. In $\R^{d}$, orthogonal projections are Lipschitz maps, so the upper bound in \eqref{mm} is trivial, and the main interest in \eqref{mm} is the lower bound.

The vertical projections $\pi_{e}$ are not Lipschitz maps $\He \to
\W_{e}$ relative to the natural metric $d_{\He}$ in $\He$ and
$\W_{e}$. Indeed, they can increase Hausdorff dimension: an easy
example is a horizontal line, which is $1$-dimensional to begin
with, but gets projected to a $2$-dimensional set -- a parabola --
in almost all directions. For general (sharp) results on how much
$\pi_{e}$ can increase Hausdorff dimension, see \cite[Theorem
1.3]{MR3047423}. We note that the vertical planes $\mathbb{W}_{e}$
themselves are $3$-dimensional, and $\He$ is $4$-dimensional.

Can the vertical projections lower Hausdorff dimension? In some directions they can, and the general (sharp) universal lower bound was already found in \cite[Theorem 1.3]{MR3047423}:
\begin{displaymath} \Hd \pi_{e}(K) \geq \max\{0,\tfrac{1}{2}(\Hd K - 1),2\Hd K - 5\}, \qquad e \in S^{1}. \end{displaymath}
Our main result states that the dimension drop cannot occur in a set of directions of positive measure for sets of dimension in $[0,2] \cup \{3\}$:
\begin{thm}\label{mainSimple} Let $K \subset \He$ be a Borel set with $\Hd K \in [0,2] \cup \{3\}$. Then $\Hd \pi_{e}(K) \geq \Hd K$ for $\mathcal{H}^{1}$ almost every $e \in S^{1}$. \end{thm}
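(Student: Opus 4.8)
The plan is to split into the two regimes $\Hd K \in [0,2]$ and $\Hd K = 3$, exactly as the abstract advertises, since the mechanisms are genuinely different. For the first regime, the key structural fact is that the vertical projections $\pi_{e}$, although nonlinear, have a very rigid form: reading off \eqref{projections} (and its general version), $\pi_{e}(x,y,t)$ has a final "vertical" coordinate that is an affine function of $t$ together with a quadratic form in $(x,y)$ depending on $e$. I would first reduce the Heisenberg dimension statement to a Euclidean one. Recall that $\Hd$ and $\dim_{\mathrm E}$ are comparable up to the usual factor-of-two phenomenon on $\He = \R^3$: for a set of Heisenberg dimension $s \le 2$ one can pass to a subset, or a measure, whose Euclidean projection behavior is controlled by a Euclidean dimension $\ge s$. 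Concretely, I would fix a Frostman measure $\mu$ on $K$ with exponent close to $\Hd K$, and aim to show that for a.e.\ $e$ the pushforward $(\pi_e)_\# \mu$ has finite $s$-energy for every $s < \Hd K$; by the standard energy criterion this gives $\Hd \pi_e(K) \ge \Hd K$ a.e. The nonlinearity forces us to estimate $\int_{S^1} \int \int |\pi_e(p) - \pi_e(q)|^{-s}\, d\mu(p)\,d\mu(q)\, de$ where the inner $e$-integral is a sublevel-set estimate for the map $e \mapsto |\pi_e(p)-\pi_e(q)|$; this map is not linear in $e$, so Fourier-analytic projection methods do not directly apply, and this is exactly where the \emph{cinematic function} technology of Pramanik–Yang–Zahl enters — one checks that, for fixed $p \neq q$, the relevant one-parameter family $e \mapsto \pi_e(p) - \pi_e(q)$ satisfies the cinematic (curvature/transversality) hypotheses, uniformly in $p,q$ up to harmless degenerate configurations, and then the abstract machinery yields the needed sublevel bounds and hence the energy estimate.

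For the endpoint case $\Hd K = 3$, the energy method above is not available (the target $\W_e$ is $3$-dimensional, so we are at the critical exponent and need a more combinatorial input). Here the plan is to use the point–line duality hinted at in the abstract: horizontal lines in $\He$ correspond, under an explicit change of variables, to certain "conical" lines in $\R^3$, and the question of whether $\pi_e(K)$ can have Heisenberg dimension $< 3$ gets dualized into a statement that a family of $\delta$-plates in $\R^3$ (thin $\delta \times \delta^{1/2} \times 1$ slabs, in the scaling governed by the Heisenberg homogeneity) cannot be too concentrated — a point–plate incidence bound. I would set this up at a single scale $\delta$: decompose $K$ into $\delta$-cubes, assume for contradiction that $\pi_e(K)$ is covered efficiently by few $\delta$-balls in $\W_e$ for a positive-measure set of $e$, translate each such covering into an arrangement of plates dual to the horizontal fibers, and then invoke the Kakeya inequality for plates in $\R^3$ due to Guth–Wang–Zhang to bound the number of incidences, reaching a contradiction with the Frostman lower bound on $K$. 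Letting $\delta \to 0$ completes the proof; the same argument, run with the best currently available plate-Kakeya exponents, degrades gracefully to give the partial results in the range $\Hd K \in (5/2,3)$.

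The main obstacle, I expect, is verifying the cinematic hypotheses in the $[0,2]$ case \emph{uniformly} over all pairs $p \ne q$ (and more precisely over a suitable "multi-scale" family, since one typically needs the cinematic estimate at every scale, not just one), and in particular handling the degenerate loci where the curvature of $e \mapsto \pi_e(p)-\pi_e(q)$ vanishes or the transversality between two such curves degenerates. These degeneracies correspond to special geometric position of $p,q$ relative to the Heisenberg structure (e.g.\ $p,q$ on a common horizontal line, or differing only in the vertical coordinate), and must be excised or treated separately, which is where most of the technical work lies. In the $\Hd K = 3$ case the analogous difficulty is bookkeeping the correct plate dimensions and multiplicities coming from the Heisenberg scaling when applying the Guth–Wang–Zhang bound — ensuring the duality really lands incidences in the regime their theorem covers.
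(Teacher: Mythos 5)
Your high-level split into the two regimes $\Hd K \in [0,2]$ (cinematic functions) and $\Hd K = 3$ (point--plate incidences) matches the paper's strategy, and the broad picture for the endpoint case (duality sending horizontal lines/Heisenberg balls to conical lines/plates, then Guth--Wang--Zhang) is correctly identified. However, there is a genuine gap in your plan for the $[0,2]$ regime, and the endpoint case skips some structure that matters.

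For $\Hd K \in [0,2]$, you propose to run a Frostman/energy argument directly on the $2$-dimensional maps $\pi_e \colon \He \to \W_e$ and to verify cinematic hypotheses for $e \mapsto \pi_e(p) - \pi_e(q)$. This is not how the paper proceeds, and I do not think it goes through as stated: the Pramanik--Yang--Zahl cinematic framework (Definition \ref{d:cinematic}) applies to \emph{scalar-valued} families $f_p \colon I \to \R$; the maps $\pi_e$ take values in a plane, and $|\pi_e(p) - \pi_e(q)|_{\He}$ is a nonlinear modulus, not of the required form. The paper's key observation, which you are missing, is to pre-compose with the vertical coordinate: set $\rho_e := \pi_T \circ \pi_e \colon \R^3 \to \R$. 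Since the $t$-axis is $2$-dimensional in $\He$ and $\pi_T|_{\W_e}$ is $d_{\He}$-Lipschitz, one has $\Hd \pi_e(K) \geq \Hd \rho_e(K)$, and a \emph{Euclidean} $1$-dimensional restricted-projection theorem for $\rho_e$ (namely $\dim_{\mathrm E}\rho_e(K) = \min\{\dim_{\mathrm E} K,1\}$ a.e.) converts, via the square-root metric on $\R$, into the Heisenberg bound $\Hd \rho_e(K) \geq \min\{\Hd K,2\}$. After this reduction the family $f_p(\theta) = \rho_\theta(p) = t + \tfrac{1}{2}\langle z,e(\theta)\rangle\langle z,Je(\theta)\rangle$ \emph{is} scalar-valued, and verifying the cinematic condition is an explicit Jacobian computation (Lemma \ref{lemma3}): the only degeneracy is the $t$-axis, which is harmless because $\rho_e$ is an isometry there. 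Your concern about excising multi-scale degenerate loci of pairs $(p,q)$ therefore overestimates the difficulty once the correct reduction is made. Also, the framework proves an exceptional-set bound on $\{e : \dim_{\mathrm E}\rho_e(K) \le s\}$ directly rather than a finite-energy statement; the two are related but your energy-integral formulation would require sublevel bounds at the Heisenberg scale that the cinematic machinery does not directly supply.

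For $\Hd K = 3$, your outline is roughly right, but two points deserve attention. First, the paper does not directly prove $\Hd \pi_e(K) \geq 3$: it proves a ``mixed'' bound $\dim_{\mathrm E}\pi_e(K) \geq \min\{\Hd K - 1, 2\}$ a.e.\ (Theorem \ref{main}) and then converts via a general comparison of Euclidean and Heisenberg dimensions on $\W_e$, which at the endpoint gives exactly $3$. The discretised statement is phrased as a lower bound on $\mathrm{Leb}(\pi_e(\cup \mathcal{B}'))$ (Theorem \ref{mainDiscrete}), crucially because Lebesgue measure --- unlike $\Hd$ of the projection --- is invariant under left translation of the set. Second, and more importantly, the GWZ plate-Kakeya inequality interacts cleanly only with $(\delta,3)$-sets; for general $t \le 3$ the paper runs an ``augmentation'' argument (Proposition \ref{prop:aug} plus Lemma \ref{lemma1}), convolving the measure from the left with a random discrete set to promote a $(\delta,t)$-set to a $(\delta,3)$-set, precisely exploiting the left-translation invariance of $\mathrm{Leb}(\pi_e(\cdot))$. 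Your proposal does not engage with this step. At the exact endpoint $t = 3$ the augmentation is vacuous, but it is baked into the structure of Theorem \ref{mainDiscrete}, which the endpoint argument also needs (with $s$ strictly below $t = 3$). Your stated plate dimensions ``$\delta \times \delta^{1/2} \times 1$'' reflect a different scaling convention from the paper's $1 \times \delta \times \delta^2$; this is cosmetic but worth pinning down, since the whole point of the duality is that Heisenberg $\delta$-balls match $\delta$-plates (not $\delta^{1/2}$-plates) tangent to the cone.
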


The result is sharp for all values $\Hd K \in [0,2] \cup \{3\}$, and new for $\Hd K \in (1,2] \cup \{3\}$. It makes progress in \cite[Conjecture 1.5]{MR3047423} which proposes that
\begin{equation}\label{mainConj} \Hd \pi_{e}(K) \geq \min\{\Hd K,3\} \end{equation}
for $\mathcal{H}^{1}$ almost every $e \in S^{1}$. The cases $\Hd K \in [0,1]$ were established around a decade ago by Balogh, Durand-Cartagena, the first author, Mattila, and Tyson \cite[Theorem 1.4]{MR3047423}. For $\Hd K > 1$, the strongest previous partial result is due to Harris \cite{Harris} who in 2022 proved that
\begin{displaymath} \Hd \pi_{e}(K) \geq \min\left\{\frac{1 + \Hd K}{2},2\right\} \quad \text{for $\mathcal{H}^{1}$ a.e. $e \in S^{1}$}. \end{displaymath}
Other partial results, also higher dimensions, are contained in \cite{MR2955184,MR3495435,MR4112255,MR4321543}.

The "disconnected" assumption $\Hd K \in [0,2] \cup \{3\}$ is due to the fact that Theorem \ref{mainSimple} is a combination of two separate results, with different proofs. Perhaps surprisingly, the cases $\Hd K \in [0,2]$ are a consequence of a "$1$-dimensional" projection theorem. Namely, consider the (nonlinear) projections $\rho_{e} \colon \R^{3} \to \R$ obtained as the $t$-coordinates of the projections $\pi_{e}$: 
\begin{equation}\label{form63} \rho_{e} = \pi_{T} \circ \pi_{e}, \qquad \pi_{T}(x,y,t) = (0,0,t). \end{equation}
Since the $t$-axis in $\He$ is $2$-dimensional, it is conceivable that the maps $\rho_{e}$ do not a.e. lower the Hausdorff dimension of Borel sets of dimension at most $2$. This is what we prove: 

\begin{thm}\label{mainLowDimIntro} Let $K \subset \R^{3}$ be a Borel set. Then 
\begin{displaymath} \dim_{\mathrm{E}} \rho_{e}(K) = \min\{\dim_{\mathrm{E}} K,1\} \quad \text{and} \quad \Hd \rho_{e}(K) \geq \min\{\Hd K,2\} \end{displaymath}
for $\mathcal{H}^{1}$ almost every $e \in S^{1}$. In fact, the following shaper conclusion holds: for $0 \leq s < \min\{\Hd K,2\}$, we have $\dim_{\mathrm{E}} \{e \in S^{1} : \Hd \rho_{e}(K) \leq s\} \leq \tfrac{s}{2}$. \end{thm}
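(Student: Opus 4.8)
The plan is to prove the sharper statement, since the two displayed conclusions follow from it: the Euclidean equality $\dim_{\mathrm{E}}\rho_e(K)=\min\{\dim_{\mathrm E}K,1\}$ because projecting $K$ onto $\R$ cannot raise Euclidean dimension above $1$ (upper bound trivial) while the sharper conclusion with $s<\min\{\dim_{\mathrm E}K,1\}$ gives the lower bound; and the Heisenberg conclusion $\dim_{\mathrm H}\rho_e(K)\geq\min\{\dim_{\mathrm H}K,2\}$ because the image $\rho_e(K)\subset\R$ sits inside the $t$-axis, which is $2$-dimensional in the Heisenberg metric, so Heisenberg Hausdorff dimension of a subset of $\R$ equals twice its Euclidean dimension, and $\dim_{\mathrm H}K\leq 2\dim_{\mathrm E}K$ after unpacking how $\dim_{\mathrm H}$ compares to $\dim_{\mathrm E}$ on subsets of $\R^3$. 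So the whole theorem reduces to the Euclidean exceptional-set estimate: for $0\le s<\min\{\dim_{\mathrm E}K,1\}$ (after relabelling $\min\{\dim_{\mathrm H}K,2\}$ appropriately in terms of the Euclidean dimension), the set $E_s:=\{e\in S^1:\dim_{\mathrm E}\rho_e(K)\le s\}$ has $\dim_{\mathrm E}E_s\le s/2$.

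**The core energy computation.**

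First I would compute $\rho_e$ explicitly. From \eqref{projections} and \eqref{form63}, for $e=(\cos\theta,\sin\theta,0)$ one gets a formula of the shape $\rho_e(x,y,t)=t+Q_\theta(x,y)$ where $Q_\theta$ is a quadratic form in $(x,y)$ whose coefficients are trigonometric polynomials in $\theta$ — essentially $\rho_{e_1}(x,y,t)=t+xy/2$ and $\rho_{e_2}(x,y,t)=t-xy/2$ interpolate via rotation. The key analytic input is then a potential-theoretic / Fourier-type argument in the spirit of the Kaufman and Falconer exceptional-set bounds. Fix $s<\min\{\dim_{\mathrm E}K,1\}$ and choose (by Frostman) a measure $\mu$ on $K$ with finite $s'$-energy for some $s<s'<\min\{\dim_{\mathrm E}K,1\}$. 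For the pushforward $(\rho_e)_\sharp\mu$ on $\R$, one controls
\[
\int_{S^1}\int_{\R}\int_{\R}\frac{d(\rho_e)_\sharp\mu(u)\,d(\rho_e)_\sharp\mu(v)}{|u-v|^{s}}\,d\sigma(e)
=\iint\!\!\left(\int_{S^1}\frac{d\sigma(e)}{|\rho_e(p)-\rho_e(q)|^{s}}\right)d\mu(p)\,d\mu(q),
\]
and the task is to show the inner integral is $\lesssim |p-q|^{-s}$ (Euclidean distance in $\R^3$). Here $\rho_e(p)-\rho_e(q)=(t_p-t_q)+\bigl(Q_\theta(x_p,y_p)-Q_\theta(x_q,y_q)\bigr)$; as a function of $\theta$ this is a trigonometric polynomial of bounded degree whose sup-norm of derivative is $\lesssim|p-q|$ near the relevant scale, so by a standard sublevel-set estimate (van der Corput / the elementary fact that $\int|g(\theta)|^{-s}\,d\theta\lesssim\|g'\|_\infty^{-s}$ for $s<1$ when $g$ has one well-separated zero) one obtains the $|p-q|^{-s}$ bound — this is exactly where $s<1$ is used, and where the exponent $s/2$ versus $s$ will come from.

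**Extracting the $s/2$ exceptional exponent.**

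To get the sharper $\dim_{\mathrm E}E_s\le s/2$ rather than merely $\dim_{\mathrm E}E_s\le s$, I would follow the Falconer–Mattila refinement: instead of integrating over all of $S^1$ with arclength, integrate against an auxiliary Frostman measure $\nu$ of dimension $\beta$ supported on $E_s$ and show the double integral is finite whenever $\beta>s/2$, which forces $(\rho_e)_\sharp\mu$ to have finite $s$-energy for $\nu$-a.e. $e$, contradicting $e\in E_s$. The improvement to $s/2$ hinges on a \emph{transversality with a quadratic vanishing}: the trigonometric polynomial $\theta\mapsto\rho_e(p)-\rho_e(q)$ can vanish to first order, but the relevant curve family (the "$\rho_e$" family is a genuinely curved $1$-parameter family, not a line family) has the property that a second-order non-degeneracy lets one gain a square root — concretely, the mixed integral $\int_{S^1}\int_{S^1}|\rho_e(p)-\rho_e(q)|^{-s}\,d\nu(e)\,d\sigma(\cdot)$-type quantity only needs $\beta+\beta/1 > s$ in a naive bound, but the curvature upgrades one of the two $\theta$-integrations to give an extra factor $|p-q|^{1/2}$, replacing $s$ by $2\beta$. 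I expect the main obstacle to be making this curvature/second-order argument rigorous uniformly in $(p,q)$ — in particular handling the degenerate locus where the quadratic form difference $Q_\theta(x_p,y_p)-Q_\theta(x_q,y_q)$ degenerates (e.g. when $p,q$ lie on a common horizontal line, so $\rho_e(p)=\rho_e(q)$ for \emph{all} $e$): such pairs must be shown to carry zero $\mu\times\mu$ mass, or be absorbed, using that $K$ being a set (not a line) limits how concentrated $\mu$ can be on horizontal lines. Once the energy integral is finite for $\beta>s/2$, the standard conclusion $\dim_{\mathrm E}E_s\le s/2$ follows, and letting $s\uparrow\min\{\dim_{\mathrm E}K,1\}$ completes the proof.
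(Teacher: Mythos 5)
Your reduction of the Heisenberg statements to a Euclidean exceptional-set estimate is essentially correct in spirit (and matches the paper's Remark~\ref{rem3}), but then two things go wrong, one minor and one fatal.

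The minor one is a mis-identification of which Euclidean statement is actually needed. The exponent $s/2$ in the Heisenberg exceptional-set bound is entirely an artifact of the metric change on the $t$-axis: the square-root metric doubles Hausdorff dimension, so $\{e : \dim_{\mathbb H}\rho_e(K)\le s\}=\{e:\dim_{\mathrm E}\rho_e(K)\le s/2\}$, and one also has $\dim_{\mathbb H} K\le 2\dim_{\mathrm E}K$. Consequently the correct Euclidean input (Theorem~\ref{mainLowDim}) is the bound $\dim_{\mathrm E}\{e : \dim_{\mathrm E}\rho_e(K)\le s\}\le s$ for $s<\min\{\dim_{\mathrm E}K,1\}$ --- the Kaufman exponent $s$, not $s/2$. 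You are aiming to prove a Euclidean exceptional-set bound $\le s/2$ which is not claimed and would be stronger than what is known even for linear restricted projections.

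The fatal gap is in the energy/van~der~Corput computation itself. For the family $\rho_\theta$, the relevant function $g(\theta)=\rho_\theta(p)-\rho_\theta(q)$ can vanish to \emph{second} order in $\theta$; the cinematic condition (Proposition~\ref{prop3}) only guarantees $|g|+|g'|+|g''|\gtrsim|p-q|$, i.e.\ vanishing of order at most two. In that degenerate case $|g(\theta)|\gtrsim|p-q|\,|\theta-\theta_0|^2$, so the sublevel sets satisfy $|\{|g|<\lambda\}|\lesssim(\lambda/|p-q|)^{1/2}$ and the integral $\int|g(\theta)|^{-s}\,d\theta$ \emph{diverges} for $s\ge 1/2$. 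Thus the plain Kaufman argument breaks down already at $s=1/2$; replacing $d\theta$ by a $\beta$-Frostman measure $d\nu$ only gives the inner integral $\lesssim|p-q|^{-s}$ when $\beta>2s$, i.e.\ an exceptional-set bound $\le 2s$, which is \emph{weaker} than $\le s$, not stronger. The curvature of the family is the obstruction here, not a source of an extra $|p-q|^{1/2}$ gain. This is precisely why restricted projections onto one-parameter families are hard beyond dimension $1/2$: to reach the full range $s<1$ one needs the Pramanik--Yang--Zahl cinematic/Kakeya machinery (which the paper invokes via Theorem~\ref{mainTransversal} and Proposition~\ref{prop4}), or equivalent decoupling-based tools, rather than a direct energy computation. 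Your proposal does not engage with this at all, so it would not yield the theorem for $\dim_{\mathrm E}K>1/2$, equivalently $\dim_{\mathbb H}K>1$, which is exactly the new range claimed.

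A side remark: you are right to flag the degenerate pairs $(p,q)$ lying on a common horizontal line (for which $\rho_e(p)=\rho_e(q)$ for \emph{all} $e$), but the paper handles this by simply discarding the $t$-axis and working in a ball $B(p_0,r)$ where the Jacobian of $p\mapsto(f_p(0),f_p'(0),f_p''(0))$ is nonvanishing (Lemma~\ref{lemma3}); it is not the main analytic difficulty, and your ``zero $\mu\times\mu$ mass'' device would need additional hypotheses on $K$ that are not given.
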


Theorem \ref{mainLowDimIntro} implies the cases $\Hd K \in [0,2]$ of Theorem \ref{mainSimple}, because the map $\pi_{T}$ is Lipschitz when restricted to any plane $\W_{e}$, thus $\Hd \pi_{e}(K) \geq \Hd \rho_{e}(K)$ for all $e \in S^{1}$.

The proof of Theorem \ref{mainLowDimIntro} is a fairly straightforward application of recently developed technology to study the \emph{restricted projections} problem in $\R^{3}$ (see \cite{2022arXiv220713844G,2022arXiv220915152G,2022arXiv220806896H,2017arXiv170804859K,2022arXiv220702259P}). Even though the maps $\rho_{e}$ are nonlinear, Theorem \ref{mainLowDimIntro} falls within the scope of the \emph{cinematic function} framework introduced by Pramanik, Yang, and Zahl \cite{2022arXiv220702259P}. In Theorem \ref{mainTransversal}, we apply this framework to record a more general version of Theorem \ref{mainLowDimIntro} which simultaneously generalises \cite[Theorem 1.3]{2022arXiv220702259P} and Theorem \ref{mainLowDimIntro}. The details can be found in Section \ref{s:Dim2}.

The case $\Hd K = 3$ of Theorem \ref{mainSimple} is the harder result. This time we do not know how to deduce it from a purely Euclidean statement. Instead, it is deduced from the following "mixed" result between Heisenberg and Euclidean metrics:

\begin{thm}\label{main} Let $K \subset \He$ be a Borel set with $\Hd K \geq 2$. Then,
\begin{equation}\label{mainEuc} \dim_{\mathrm{E}} \pi_{e}(K) \geq \min\{\Hd K - 1,2\} \end{equation}
for $\mathcal{H}^{1}$ almost every $e \in S^{1}$, and consequently
\begin{equation}\label{mainHeis} \Hd \pi_{e}(K) \geq\min\{ 2\Hd K - 3 ,3\}\end{equation}
for $\mathcal{H}^{1}$ almost every $e \in S^{1}$.
\end{thm}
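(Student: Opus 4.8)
The plan is to prove the Euclidean bound \eqref{mainEuc} first, since the Heisenberg bound \eqref{mainHeis} then follows by a standard comparison between $d_{\He}$ and the Euclidean metric on the three-dimensional plane $\W_e$: on any bounded region the identity map $(\W_e, d_{\He}) \to (\W_e, |\cdot|)$ is locally $\tfrac{1}{2}$-Hölder in the vertical direction, so $\dim_{\mathrm{E}} \pi_e(K) \leq 2\,\Hd \pi_e(K) - (\text{correction})$; more precisely one has $\Hd A \geq \tfrac{1}{2}(\dim_{\mathrm{E}} A + s_0)$ type inequalities for subsets of a vertical plane, and feeding $\dim_{\mathrm{E}} \pi_e(K) \geq \min\{\Hd K - 1, 2\}$ into this yields $\Hd \pi_e(K) \geq \min\{2\Hd K - 3, 3\}$. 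I would isolate this comparison as a separate elementary lemma about Hausdorff dimension on vertical planes and then treat \eqref{mainHeis} as a one-line corollary of \eqref{mainEuc}.

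The core is therefore \eqref{mainEuc}. First I would discretize: fix $s < \min\{\Hd K - 1, 2\}$ and, using a Frostman measure $\mu$ on $K$ with $\mu(B(x,r)) \lesssim r^{s+1}$, reduce (by a standard pigeonholing / Fubini argument over dyadic scales) to the statement that the set of bad directions $E = \{e : \dim_{\mathrm{E}} \pi_e(K) \leq s\}$ cannot have positive $\mathcal{H}^1$-measure, which in turn follows from a single-scale incidence estimate at scale $\delta$. The key geometric input is the point–line duality advertised in the abstract: a horizontal line $w \cdot \mathbb{L}_e$ in $\He$ corresponds, under the map recording $(e, \pi_e(\text{point}))$, to a \emph{conical line} in $\R^3$ (a line tangent to the light cone), so a $\delta$-neighborhood of such a line is a $\delta \times \delta \times \delta^{1/2}$ \emph{plate}. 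Thus the event ``$x$ and $x'$ both project $\delta$-close under $\pi_e$'' translates into ``the dual point lies in the dual plate,'' and the measure of the bad direction set is controlled by a point–plate incidence count in $\R^3$. I would set up this dictionary carefully — tracking how a Frostman condition on $K \subset \He$ becomes a Frostman/spacing condition on the dual point set, and how the plates inherit a separation (direction-spread) condition.

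The main step, and the main obstacle, is the point–plate incidence bound itself. Here I would invoke the Kakeya inequality for plates in $\R^3$ of Guth–Wang–Zhang: a family of $\delta$-plates with $\delta^{1/2}$-separated directions satisfies an $L^{3/2}$ (or reverse square function) estimate, which after duality gives precisely the incidence count needed to force $\mathcal{H}^1(E) = 0$ when $s < \min\{\Hd K - 1, 2\}$. The delicate points are: (i) verifying that the dual plates genuinely satisfy the direction-separation hypothesis of the GWZ inequality (this is where the conical, rather than arbitrary, nature of the lines is essential, and where the threshold $5/2$ versus $3$ will show up — for $\Hd K$ strictly between $5/2$ and $3$ one only gets a partial conclusion because the available Kakeya bound is not sharp in that range); (ii) organizing the multi-scale induction/pigeonholing so that losses are only $\delta^{-\epsilon}$; and (iii) handling the transition region where $\Hd K - 1$ is close to $2$, i.e. ensuring the $\min$ with $2$ in \eqref{mainEuc} is respected because $\pi_e(K) \subset \W_e \cong \R^3$ but its Euclidean dimension is capped by the ``non-degeneracy'' of the projection in the $(x,y)$-variables, which are genuinely two-dimensional. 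Once the single-scale incidence bound is in hand with the right exponent, standard potential-theoretic machinery (an energy integral $\int\!\!\int |\pi_e(x) - \pi_e(x')|^{-s}\,d\mu(x)\,d\mu(x')$ integrated in $e$) upgrades it to the almost-everywhere dimension statement \eqref{mainEuc}.
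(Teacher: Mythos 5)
Your plan matches the paper's broad outline: deduce the Heisenberg bound \eqref{mainHeis} from the Euclidean bound \eqref{mainEuc} by comparing metrics on $\W_e$, discretize, pass through point--line duality to a point--plate incidence problem in $\R^3$, and close with the Guth--Wang--Zhang Kakeya estimate for plates. But there is one substantial missing idea, plus two technical inaccuracies that would derail the exponent bookkeeping.

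The substantial gap is the following. The GWZ plate estimate interacts cleanly only with a \emph{three}-dimensional non-concentration hypothesis, i.e.\ with families of plates dual to $(\delta,3)$-sets of Heisenberg balls. A Frostman measure on $K$ with exponent $t = s+1 < 3$ produces only a $(\delta,t)$-set, and your plan feeds this ``inherited Frostman/spacing condition'' directly into GWZ. This does not close: the paper states explicitly that the $(\delta,t)$-condition with $t<3$ no longer interacts well with the GWZ inequality. The decisive and non-obvious step in the paper is a \emph{random augmentation}: left-convolve $\mu$ with a uniform measure $\eta$ on roughly $\delta^{-(3-t)}$ random lattice points to upgrade the $\delta$-truncated Riesz energy from exponent $t$ to exponent $3$ (Proposition \ref{prop:aug}), and then exploit the left-invariance $\mathrm{Leb}(\pi_e(p\cdot E)) = \mathrm{Leb}(\pi_e(E))$ from \eqref{projLeb} to transfer the resulting Lebesgue-measure lower bound back to the original set. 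The whole reason Theorem \ref{mainDiscrete} is stated in terms of Lebesgue measure rather than covering number is precisely so that this invariance is available. Without this device, the incidence estimate you get from GWZ applied to a $(\delta,t)$-set is strictly too weak to yield $\dim_{\mathrm{E}}\pi_e(K)\geq t-1$.

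Two further points that would need correcting before the argument could be carried out. First, the relevant plates are rectangles of dimensions $1\times\delta\times\delta^2$ tangent to the cone $\mathcal{C}$ --- these are the $\ell^\ast$-duals of Heisenberg $\delta$-balls (Proposition \ref{prop1}) --- not $\delta\times\delta\times\delta^{1/2}$; with the wrong shape the $\delta^{-4}$-numerology matching plates to Heisenberg $\delta$-balls fails and so do all subsequent exponents. Second, the Kakeya inequality of Guth--Wang--Zhang invoked here is the $L^4$ reverse-square-function estimate \eqref{GWZ} for the cone; the $L^{3/2}$ estimate you mention belongs to the Pramanik--Yang--Zahl cinematic-function machinery, which the paper uses for the complementary low-dimensional regime $\Hd K\leq 2$ (Theorem \ref{mainLowDim}), not for Theorem \ref{main}. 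Conflating the two leads to the wrong incidence threshold. Finally, a small stylistic remark: the paper does not run an energy integral of the form $\iint |\pi_e(x)-\pi_e(x')|^{-s}$ over $e$, but instead controls $\int_{S^1}\|\pi_e\mu\|_{L^2}^2$ and converts it via the line measure $\mathfrak{m}$ and duality into an $L^2$-norm of a sum of plate indicators; because $\pi_e$ is not Lipschitz for $d_\He$ and because of the kernel asymmetry, this route is genuinely cleaner than a raw energy integral, though this is a matter of organization rather than a gap.
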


Theorem \ref{main} will further be deduced from a $\delta$-discretised result which may have independent interest. We state here a simplified version (the full version is Theorem \ref{mainDiscrete}):
\begin{thm}\label{mainMeasure} Let $0 \leq t \leq 3$ and $\eta > 0$. Then, the following holds for $\delta,\epsilon > 0$ small enough,
depending only on $\eta$. Let $\mathcal{B}$ be a non-empty
$(\delta,t,\delta^{-\epsilon})$-set of Heisenberg balls of radius
$\delta$, all contained in $B_{\He}(1)$. Then, there exists $e \in
S^{1}$ such that
\begin{equation}\label{form45} \mathrm{Leb}(\pi_{e}(\cup \mathcal{B})) \geq \delta^{3 - t + \eta}. \end{equation}
\end{thm}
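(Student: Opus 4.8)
The plan is to set up a point-plate duality, as announced in the abstract, and then to invoke the Kakeya inequality for plates of Guth--Wang--Zhang. Here is the outline. Fix $t$ and $\eta$, and let $\mathcal{B}$ be a $(\delta,t,\delta^{-\epsilon})$-set of Heisenberg $\delta$-balls in $B_{\He}(1)$, so $\card \mathcal{B} \gtrsim \delta^{-t+\epsilon}$ with the standard non-concentration condition. We argue by contradiction: suppose that for \emph{every} $e \in S^{1}$ one has $\mathrm{Leb}(\pi_{e}(\cup\mathcal{B})) < \delta^{3-t+\eta}$. Since $\pi_{e}(\cup\mathcal{B})$ is (essentially) a union of $\delta$-balls in the $3$-dimensional plane $\W_{e}$, this says that the $\delta$-covering number $N_{e}$ of $\pi_{e}(\cup\mathcal{B})$ satisfies $N_{e} \lesssim \delta^{-t+\eta}$. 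In other words, the $\approx \delta^{-t}$ balls of $\mathcal{B}$ collapse, under each $\pi_{e}$, onto only $\approx \delta^{-t+\eta}$ images; so by pigeonholing there must be, for each $e$, a family of $\approx \delta^{-\eta}$ balls sharing a common image fibre. The fibre of $\pi_{e}$ through a point $p$ is the horizontal line $p \cdot \mathbb{L}_{e}$, so "sharing an image under $\pi_{e}$'' means "lying on a common horizontal line in direction $e$''. Thus the contradiction hypothesis forces a large number of rich horizontal-line incidences with the ball centres, for every direction $e$ simultaneously.

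The next step is the duality. Following the strategy flagged in the abstract, I would associate to each horizontal line $\ell \subset \He$ (or rather to its $\delta$-neighbourhood) a \emph{conical plate} in $\R^{3}$: a $\delta \times \delta^{1/2} \times 1$ slab tangent to the light cone, whose orientation is governed by the direction $e$ of $\ell$ and whose position is governed by the intercept parameters of $\ell$. This is the "point-line duality between horizontal lines and conical lines'' mentioned in the introduction. Concretely, one checks that a ball $B(p,\delta)$ meets the $\delta$-tube around the horizontal line $\ell$ if and only if the dual point $p^{*} \in \R^{3}$ lies in the dual plate $\ell^{*}$; the quadratic term $\tfrac{xy}{2}$ in the projection formulae \eqref{projections} is exactly what produces the parabolic (conical) curvature of the plates rather than flat ones. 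Under this dictionary, the rich-incidence configuration extracted above becomes: a set of $\approx \delta^{-t}$ points in $\R^{3}$ (the dual ball-centres), with non-concentration inherited from the $(\delta,t,\delta^{-\epsilon})$-condition, such that through each point pass many plates, and the plates realise all $\approx \delta^{-1}$ directions on $S^{1}$ — i.e. a Kakeya-type family of conical plates pinned to few points.

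Finally, I would run the Kakeya inequality for plates in $\R^{3}$ of Guth, Wang, and Zhang against this configuration. Roughly, that inequality bounds the overlap of a direction-separated family of $\delta \times \delta^{1/2} \times 1$ plates, and translated through the duality it upper-bounds how many ball-centres can be "shadowed'' onto a small set by all projections at once; quantitatively it will say that if all the shadows $\pi_{e}(\cup\mathcal{B})$ were as small as $\delta^{3-t+\eta}$, then $\cup \mathcal{B}$ itself would be forced to be much smaller than a $(\delta,t)$-set permits — contradicting $\card\mathcal{B} \gtrsim \delta^{-t+\epsilon}$ once $\delta,\epsilon$ are small in terms of $\eta$. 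I expect the main obstacle to be \textbf{making the duality quantitatively faithful}: verifying that the $(\delta,t,\delta^{-\epsilon})$ non-concentration of the balls translates into precisely the hypotheses (a Frostman/non-clustering condition on points, direction-separation and the right "shading'' of plates) required to feed the Guth--Wang--Zhang inequality, and tracking the $\delta^{-\epsilon}$ losses so that they are absorbed by the $\delta^{\eta}$ gain. A secondary technical point is handling the parameter ranges where the plate family is not a genuine single-scale Kakeya set — one must localise to the right spatial and angular scales (and pay attention to whether the conical structure degenerates), and it is exactly this bookkeeping that limits the method to $t = 3$ (and to $t \in (5/2,3)$ only partially) rather than all $t \in [0,3]$.
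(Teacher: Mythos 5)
Your proposal correctly identifies the two big ingredients — the point-plate duality (Section \ref{s:duality}) and the Guth--Wang--Zhang Kakeya inequality for plates (Section \ref{s:GWZ}) — but it only covers the case $t = 3$, and it is built on a misreading of the statement: you write that the method is ``limited to $t = 3$ (and to $t \in (5/2,3)$ only partially) rather than all $t \in [0,3]$'', whereas Theorem \ref{mainMeasure} is claimed (and proved) for all $t \in [0,3]$; the ``$(5/2,3)$'' restriction in the abstract refers to the Hausdorff-dimension corollary \eqref{mainHeis}, not to the discretised Theorem \ref{mainMeasure}. The missing idea is the \emph{randomised augmentation} carried out in Proposition \ref{prop:aug} and the proof of Theorem \ref{mainDiscrete}. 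The difficulty, which you correctly sense, is that a $(\delta,t)$-set with $t<3$ can be \emph{more} concentrated than a $(\delta,3)$-set at intermediate scales, so it does not satisfy the $(r/\delta)^3$ non-concentration hypothesis that Proposition \ref{prop5} (the translated GWZ estimate) actually uses. The paper resolves this by choosing a random $\delta$-separated set $H \subset B_\He(1)$ of cardinality $\approx \delta^{t-3}$ and forming the Heisenberg convolution $\tau \ast_\He \mu$ of the uniform measure $\tau$ on $H$ with the measure $\mu$ on $\cup\mathcal{B}$; Proposition \ref{prop:aug} shows this produces a measure with bounded $3$-energy $I_3^\delta$, so Corollary \ref{cor1} (the $L^2$ form of the $t=3$ statement) applies. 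One then exploits the crucial fact, recorded in \eqref{projLeb}, that $\mathrm{Leb}(\pi_e(p \cdot E)) = \mathrm{Leb}(\pi_e(E))$: since $H \cdot (\cup\mathcal{B}_e)$ is a union of $\leq \delta^{t-3}$ left-translates of $\cup\mathcal{B}_e$, its projected measure is at most $\delta^{t-3}\cdot \mathrm{Leb}(\pi_e(\cup\mathcal{B}_e))$, and the contradiction is reached by comparing this to the lower bound that the $L^2$ estimate forces for the convolved set. Without this lift-and-translate step there is no route from the $t=3$ case to general $t$.

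A secondary issue is that your pigeonhole extraction of ``one rich fibre per direction'' is not the right input for the GWZ inequality. The paper instead encodes the whole projection problem as an $L^2$ bound on the X-ray transform $Xf(\ell) = \int_\ell f\,d\mathcal{H}^1$ over the measure $\mathfrak{m}$ on horizontal lines (equation \eqref{form22}), converts this via duality into the integral $\int \big(\sum_B \mathbf{1}_{\ell^*(B)}\big)^2$ over $B(2)\subset\R^3$ (Proposition \ref{prop5}), and only \emph{then} applies GWZ to the full family of conical plates $\{\ell^*(B)\}_{B\in\mathcal{B}}$. The $L^2$ framework is what allows the $(\delta,3)$ non-concentration hypothesis to interact cleanly with the right-hand side of \eqref{GWZ}, with the plate directions sorted into the sets $\mathcal{P}(\theta)$ and the bounded overlap inside each $\mathcal{P}(\theta)$ coming from Lemma \ref{lemma2}. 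A single rich fibre per direction gives you $\delta^{-1}$ plates in $\delta^{-1}$ distinct directions, which is far too sparse a family to extract the sharp exponent from a Kakeya inequality designed to be summed over all plates.
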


Here $\mathrm{Leb}$ denotes Lebesgue measure on $\W_{e}$, identified with $\R^{2}$. For the definition of $(\delta,t)$-sets of $\delta$-balls, see Definition \ref{deltaTSet}. Theorems \ref{main} and \ref{mainMeasure} are proved in Sections \ref{s:discretisation}-\ref{s:mainProof}.

\begin{remark} It seems likely that the lower bound \eqref{form45} remains valid under the alternative assumptions that $|\mathcal{B}| = \delta^{-t}$ and 
\begin{equation}\label{form40} |\{B \in \mathcal{B} : B \subset B_{\He}(p,r)\}| \leq \delta^{-\epsilon} \cdot \left(\frac{r}{\delta} \right)^{3}, \qquad p \in \He, \, r \geq \delta. \end{equation}
This is because the estimate \eqref{form45} ultimately follows from Proposition \ref{prop5} which works under the non-concentration condition \eqref{form40}. We will not need this version of Theorem \ref{mainMeasure}, so we omit the details.  \end{remark}

\subsection{Sharpness of the results} Theorem \ref{mainSimple} is sharp for all values $\Hd K \in [0,2] \cup \{3\}$. The "mixed" inequality \eqref{mainEuc} in Theorem \ref{main} is sharp for all values $\Hd K \geq 2$, even though the Heisenberg corollary \eqref{mainHeis} is unlikely to be sharp for any value $\Hd K < 3$ (in fact, Theorem \ref{mainSimple} shows that \eqref{mainHeis} is \textbf{not} sharp for $\Hd K < 5/2)$. 

The sharpness examples are as follows: if $s := \Hd K \leq 2$, take an $s$-dimensional subset of the $t$-axis, and note that the $t$-axis is preserved by the projections $\rho_{e}$ and $\pi_{e}$. If $s > 2$, take $K$ to be a union of translates of the $t$-axis, thus $K := K_{0} \times \R$. The $\pi_{e}$-projections send vertical lines to vertical lines, so $\pi_{e}(K)$ is a union of vertical lines on $\W_{e}$; more precisely $\pi_{e}(K) = \bar{\pi}_{e}(K_{0}) \times \R$, where $\bar{\pi}_{e}$ is an orthogonal projection in $\R^{2}$. These observations lead to the sharpness of \eqref{mainEuc}, and the sharpness of conjecture \eqref{mainConj}.

Theorem \ref{mainMeasure} is sharp for all values of $t \in [0,3]$. Indeed, it is possible that $|\mathcal{B}| = \delta^{-t}$, and then $\mathrm{Leb}(\pi_{e}(\cup \mathcal{B})) \lesssim \delta^{3 - t}$ for every $e \in S^{1}$. It also follows from \eqref{form45} that the smallest number of $d_{\He}$-balls of radius $\delta$ needed to cover $\pi_{e}(\cup \mathcal{B})$ is $\gtrsim \delta^{-t + \eta}$. One might think that this solves Conjecture \ref{mainConj} for all $\Hd K \in [0,3]$, but we were not able to make this deduction rigorous: the difficulty appears when attempting to $\delta$-discretise Conjecture \ref{mainConj}, and is caused by the non-Lipschitz behaviour of $\pi_{e} \colon (\He,d_{\He}) \to (\W_{e},d_{\He})$. This problem will be apparent in the proof of Theorem \ref{main} in Section \ref{s:mainProof}. Another, more heuristic, way of understanding the difference between Theorem \ref{mainMeasure} and Conjecture \ref{mainConj} is this: $\mathrm{Leb}(\pi_{e}(K))$ is invariant under left-translating $K$, but $\Hd \pi_{e}(K)$ is generally not. 

As we already explained, the proof of Theorem \ref{mainLowDimIntro}, therefore the cases $\Hd K \in [0,2]$ of Theorem \ref{mainSimple}, follow from recent developments in the theory of \emph{restricted projections} in $\R^{3}$, notably the \emph{cinematic function} framework in \cite{2022arXiv220702259P}. The proof of Theorem \ref{main} does not directly overlap with these results (see Section \ref{s:outline} for more details), and for example does not use the $\ell^{2}$-decoupling theorem, in contrast with \cite{2022arXiv220713844G,2022arXiv220915152G,2022arXiv220806896H}. That said, the argument was certainly inspired by the recent developments in the restricted projection problem.

\subsection{Proof outline for Theorem \ref{main}}\label{s:outline} The proof of Theorem \ref{main} is mainly based on two ingredients. The first one is a \emph{point-line duality} principle between horizontal lines in $\He$, and $\R^{3}$. To describe this principle, let $\mathcal{L}_{\He}$ be the family of all horizontal lines in $\He$, and let $\mathcal{L}_{\mathcal{C}}$ be the family of all lines in $\R^{3}$ which are parallel to some line contained in a conical surface $\mathcal{C}$. In Section \ref{s:duality}, we show that there exist maps $\ell \colon \R^{3} \to \mathcal{L}_{\He}$ and $\ell^{\ast} \colon \He \to \mathcal{L}_{\mathcal{C}}$ (whose ranges cover almost all of $\mathcal{L}_{\He}$ and $\mathcal{L}_{\mathcal{C}}$) which preserve \emph{incidence relations} in the following way:
\begin{displaymath} q \in \ell(p) \quad \Longleftrightarrow \quad p \in \ell^{\ast}(q), \qquad p \in \R^{3}, \, q \in \He. \end{displaymath}
Thus, informally speaking, incidence-geometric questions between
points in $\He$ and lines in $\mathcal{L}_{\He}$ can always be
transformed into incidence-geometric questions between points in
$\R^{3}$ and lines in $\mathcal{L}_{\mathcal{C}}$. The point-line duality principle described here was used implicitly by Liu \cite{MR4439466} to study Kakeya sets (formed by
horizontal lines) in $\He$. However, making the principle explicit has already proved very useful since the first version of this paper appeared: we used it in \cite{2022arXiv221009955F} to study Kakeya sets associated with $SL(2)$-lines in $\R^{3}$, and Harris \cite{2023arXiv230104645H} used it to treat the case $\Hd K > 3$ of Theorem \ref{mainSimple} (in this case the projections $\pi_{e}(K)$ turn out to have positive measure almost surely).

The question about vertical projections in $\He$ can -- after suitable discretisation -- be interpreted as an incidence geometric problem between points in $\He$ and lines in $\mathcal{L}_{\He}$. It can therefore be transformed into an incidence-geometric problem between points in $\R^{3}$ and lines in $\mathcal{L}_{\mathcal{C}}$. Which problem is this? It turns out that while the dual $\ell^{\ast}(p)$ of a point $p \in \He$ is a line in $\mathcal{L}_{\mathcal{C}}$, the dual $\ell^{\ast}(B_{\He})$ of a Heisenberg $\delta$-ball resembles an \emph{$\delta$-plate} in $\R^{3}$ -- a rectangle of dimensions $1 \times \delta \times \delta^{2}$ tangent to $\mathcal{C}$. So, the task of proving Theorem \ref{mainMeasure} (hence Theorem \ref{main}) is (roughly) equivalent to the task of solving an incidence-geometric problem between points in $\R^{3}$, and family of $\delta$-plates.

Moreover: the plates in our problem appear as duals of certain Heisenberg $\delta$-balls, approximating a $t$-dimensional set $K \subset \He$, with $0 \leq t \leq 3$. Consequently, the plates can be assumed to satisfy a $t$-dimensional "non-concentration condition" relative to the metric $d_{\He}$. In common jargon, the plate family is a \emph{$(\delta,t)$-set} relative to $d_{\He}$.

In \cite{MR4151084}, Guth, Wang, and Zhang proved the \emph{sharp (reverse) square function estimate for the cone in $\R^{3}$}. A key component in their proof was a new incidence-geometric ("Kakeya") estimate \cite[Lemma 1.4]{MR4151084} for points and $\delta$-plates in $\R^{3}$ (see Section \ref{s:GWZ} for the details). While this was not relevant in \cite{MR4151084}, it turns out that the incidence estimate in \cite[Lemma 1.4]{MR4151084} interacts perfectly with a $(\delta,3)$-set condition relative to $d_{\He}$. This allows us to prove, roughly speaking, that the vertical projections of $3$-Frostman measures on $\He$ have $L^{2}$-densities. See Corollary \ref{cor1} for a more precise statement.

For $0 \leq t < 3$, the $(\delta,t)$-set condition relative to $d_{\He}$ no longer interacts so well with \cite[Lemma 1.4]{MR4151084}. However, we were able to (roughly speaking) reduce Theorem \ref{mainMeasure} for $(\delta,t)$-sets, $0 \leq t \leq 3$, to the special case $t = 3$. This argument is explained in Section \ref{s:discretisation}, so we omit the discussion here.

\subsection*{Acknowledgements} We thank the reviewer for a careful reading of the manuscript, and for providing us with helpful comments. 

\section{Preliminaries on the Heisenberg group}\label{s:heis}

We briefly introduce the Heisenberg group and relevant related concepts. A more thorough
introduction to the geometry of the Heisenberg group can be found
in many places, for instance in the monograph \cite{CDPT}.

The \emph{Heisenberg group} $\mathbb{H}=(\mathbb{R}^3,\cdot)$ is
the set $\mathbb{R}^3$ equipped with the non-commutative group
product defined by
\begin{displaymath}
(x,y,t)\cdot (x',y',t')=
\left(x+x',y+y',t+t'+\tfrac{1}{2}(xy'-yx')\right).
\end{displaymath}
The \emph{Heisenberg dilations} are the group automorphisms
$\delta_{\lambda}$, $\lambda>0$, defined by
\begin{displaymath}
\delta_{\lambda}(x,y,t)=(\lambda x,\lambda y,\lambda^2 t).
\end{displaymath}
The group product gives rise to projection-type mappings onto
subgroups that are invariant under Heisenberg dilations. For $e\in
S^1$, we define the \emph{horizontal subgroup}
\begin{displaymath}
\mathbb{L}_e := \{(se,0):\, s\in \mathbb{R}\}.
\end{displaymath}
The \emph{vertical subgroup} $\mathbb{W}_e$  is the Euclidean
 orthogonal complement of $\mathbb{L}_e$ in $\mathbb{R}^3$; in
 particular it is a plane containing the vertical axis.
Every point $p\in \mathbb{H}$ can be written in a unique way as a
product $p =
 p_{\mathbb{W}_e}\cdot p_{\mathbb{L}_e}$ with $p_{\mathbb{W}_e}\in
 \mathbb{W}_e$ and $p_{\mathbb{L}_e}\in \mathbb{L}_e$.
The \emph{vertical Heisenberg projection} onto the vertical plane
$\mathbb{W}_e$ is the map
\begin{displaymath}
\pi_e:\mathbb{H} \to \mathbb{W}_e,\quad p= p_{\mathbb{W}_e}\cdot
p_{\mathbb{L}_e} \mapsto p_{\mathbb{W}_e}.
\end{displaymath}
The vertical projection to the $xt$-plane $\{(x,0,t) : x,t \in \R\}$ will play a special role; this projection will be denoted $\pi_{xt}$, and it has the explicit formula stated in \eqref{projections}.  Preliminaries about Heisenberg projections can be found for
instance in \cite{MR2789472,MR2955184,MR3047423}. These mappings
have turned out to play an important role in geometric measure
theory of the Heisenberg group endowed with a left-invariant
non-Euclidean metric. The \emph{Kor\'{a}nyi metric}
$d_{\mathbb{H}}$ is defined by
\begin{displaymath}
d_{\mathbb{H}}(p,q):=\|q^{-1}\cdot p\|,
\end{displaymath}
where $\|\cdot\|$ is the \emph{Kor\'{a}nyi norm} given by
\begin{displaymath}
\|(x,y,t)\| = \sqrt[4]{(x^2+y^2)^2 + 16 t^2}.
\end{displaymath}
We will use the symbol $B_{\mathbb{H}}(p,r)$ to denote the ball
centered at $p$ with radius $r$ with respect to the Kor\'{a}nyi
metric. Balls centred at the origin are denoted $B_{\He}(r)$. All vertical planes $\mathbb{W}_e$, $e\in S^1$, equipped
with $d_{\mathbb{H}}$ are isometric to each other via rotations of
$\mathbb{R}^3$ about the vertical axis. The Heisenberg dilations
are similarities with respect to $d_{\mathbb{H}}$, and it is easy
to see that $(\mathbb{H},d_{\mathbb{H}})$ is a $4$-regular space,
while the vertical subgroups $\mathbb{W}_e$ are $3$-regular with
respect to $d_{\mathbb{H}}$. Moreover, there exists a constant
$0<c<\infty$, independent of $e$, such that under the obvious
identification of $\mathbb{W}_e$ with $\mathbb{R}^2$, the
restriction of the $3$-dimensional Hausdorff measure
$\mathcal{H}^3$ to $\mathbb{W}_e$ agrees with the $2$-dimensional
Lebesgue measure $\mathrm{Leb}$ on $\mathbb{R}^2$ up to the
multiplicative constant $c$.

Vertical projections are neither group homomorphisms nor Lipschitz
mappings with respect $d_{\mathbb{H}}$. However, they behave well
with respect to the Lebesgue measure on vertical planes. Namely,
for every Borel set $E \subset \He$, we have that
\begin{equation}
\label{projLeb} \mathrm{Leb}\left(\pi_e(p\cdot E)\right)=
\mathrm{Leb}\left(\pi_e(E)\right), \qquad p\in
\mathbb{R}^3,\,e\in S^1,
\end{equation}
see the formula at the bottom of page 1970 in the proof of
\cite[Lemma 2.20]{MR3511465}.

\section{Proof of Theorem \ref{mainLowDimIntro}}\label{s:Dim2}

In this section, we prove Theorem \ref{mainLowDimIntro}, and therefore the cases $\Hd K \in [0,2]$ of Theorem \ref{mainSimple}. Further, Theorem \ref{mainLowDimIntro} will be inferred from a more general statement, Theorem \ref{mainTransversal}, modelled after \cite[Theorem 1.3]{2022arXiv220702259P}. We first
discuss Theorem \ref{mainTransversal}, and then explain in Section
\ref{ss:RestProjToHei} how it can be applied to deduce Theorem \ref{mainLowDimIntro}.

\subsection{Projections induced by cinematic functions}
We start by introducing terminology from \cite[Definition
1.6]{2022arXiv220702259P} which will be needed for the formulation
of Theorem \ref{mainTransversal}.

\begin{definition}[Cinematic family]\label{d:cinematic}
 Let $I \subset \R$ be a compact interval, and let $\mathcal{F} \subset C^{2}(I)$ be a family of functions satisfying the following conditions:
\begin{enumerate}
\item $I$ is a compact interval, and $\mathcal{F}$ has finite
diameter in $(C^{2}(I),\|\cdot\|_{C^{2}(I)})$. \item
$(\mathcal{F},\|\cdot\|_{C^{2}(I)})$ is a doubling metric space.
\item For all $f,g \in \mathcal{F}$, we have
\begin{displaymath} \inf_{\theta \in I} |f(\theta) - g(\theta)| + |f'(\theta) - g'(\theta)| + |f''(\theta) - g''(\theta)| \gtrsim \|f - g\|_{C^{2}(I)}. \end{displaymath}
\end{enumerate}
Then, $\mathcal{F}$ is called a \emph{cinematic family}.
\end{definition}

The following projection theorem is modelled after \cite[Theorem
1.3]{2022arXiv220702259P}:

\begin{thm}\label{mainTransversal} Let $L > 0$, let $I \subset \R$ be a compact interval, and let $\{\rho_{\theta}\}_{\theta \in I}$ be a family of $L$-Lipschitz maps $\rho_{\theta} \colon B \to \R$, where $B \subset \R^{3}$ is a ball. For $p \in B$, define the function $f_{p} \colon I \to \R$ by $f_{p}(\theta) := \rho_{\theta}(p)$. Assume that $p \mapsto f_{p}$ is a bilipschitz embedding $B \to C^{2}(I)$, and assume that $\mathcal{F} = \{f_{p} : p \in B\}$ is a cinematic family.

Then, the projections $\{\rho_{\theta}\}_{\theta \in I}$ satisfy
\eqref{form46}: if $K \subset \R^{3}$ is a Borel set, then
\begin{displaymath} \dim_{\mathrm{E}} \{\theta \in I : \dim_{\mathrm{E}} \rho_{\theta}(K) \leq s\} \leq s, \qquad 0 \leq s < \min\{\dim_{\mathrm{E}} K,1\}. \end{displaymath}
\end{thm}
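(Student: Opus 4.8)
The statement is a dimension estimate for the exceptional set of directions for a family of nonlinear projections $\rho_\theta \colon B \to \R$, under the hypothesis that the "curve" $\theta \mapsto f_p(\theta) = \rho_\theta(p)$ lies in a cinematic family. The natural route is to reduce the Hausdorff-dimension statement to a single-scale $\delta$-discretised incidence estimate and then invoke (a packaged form of) the argument of Pramanik--Yang--Zahl from \cite{2022arXiv220702259P}. First I would set up the standard reduction: suppose for contradiction that $s_0 := \dim_{\mathrm E}\{\theta : \dim_{\mathrm E}\rho_\theta(K) \le s\} > s$ for some $s < \min\{\dim_{\mathrm E}K,1\}$. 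Choose $t$ with $s < t < \min\{\dim_{\mathrm E}K,1\}$ and $\sigma$ with $s < \sigma < s_0$. By Frostman's lemma, fix a compactly supported measure $\nu$ on the exceptional set $E \subset I$ with $\nu(J) \lesssim |J|^\sigma$, and a measure $\mu$ on $K$ with $\mu(B(x,r)) \lesssim r^t$. By a pigeonholing/Borel--Cantelli argument over dyadic scales $\delta = 2^{-k}$, the hypothesis $\dim_{\mathrm E}\rho_\theta(K) \le s$ for every $\theta$ in the support of $\nu$ forces: for $\nu$-a.e.\ $\theta$ there are infinitely many $k$ for which $\rho_\theta(\spt\mu)$ can be covered by $\lesssim \delta^{-s}$ intervals of length $\delta$, equivalently $\mu(\rho_\theta^{-1}(\text{bad } \delta\text{-set})) \approx 1$. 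Extracting a common scale by another pigeonhole, one arrives at the following $\delta$-discrete configuration: a $(\delta,t)$-set worth of $\mu$-mass, a $(\delta,\sigma)$-set of directions carrying $\nu$-mass $\approx 1$, and for each such direction $\theta$ a union $H_\theta$ of $\lesssim \delta^{-s}$ many $\delta$-intervals with $\mu(\rho_\theta^{-1}(H_\theta)) \gtrsim (\log(1/\delta))^{-C}$.

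\textbf{The cinematic/incidence core.} The heart of the matter is to show this configuration is impossible when $s < t$ (up to $\delta^{-\epsilon}$ losses), i.e.\ to prove a statement of the shape: if $\mathcal F$ is cinematic, then for any $(\delta,t,\delta^{-\epsilon})$-set $P \subset B$ and any $(\delta,\sigma,\delta^{-\epsilon})$-set $\Theta \subset I$, the number of incidences between the $\delta$-neighbourhoods of the "graphs" $\{(\theta,f_p(\theta)): \theta \in I\}$ and a family of $\delta$-tubes (one vertical $1 \times \delta$ tube over each $\delta$-interval in each $H_\theta$, $\theta \in \Theta$) is too small to account for the assumed mass. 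This is exactly the content encapsulated in \cite[Theorem 1.3]{2022arXiv220702259P} and its proof: the three cinematic axioms (bounded $C^2$-diameter, doubling, and the lower bound $\inf_\theta(|f-g|+|f'-g'|+|f''-g''|) \gtrsim \|f-g\|_{C^2}$) are precisely what is needed to run their induction-on-scales / broad--narrow decomposition, invoking a $C^2$-nondegeneracy ("curvature") input so that at each scale two graphs that agree to order $\delta$ at a point must diverge, while a refined cell decomposition controls the "clustering" of the $f_p$. Since our hypotheses are literally a reformulation of \cite[Definition 1.6]{2022arXiv220702259P} plus the bilipschitz embedding $p \mapsto f_p$ (which lets us transfer the $(\delta,t)$-set condition on $P \subset B \subset \R^3$ into a $(\delta,t)$-set condition in the metric space $(\mathcal F,\|\cdot\|_{C^2})$, and lets us transfer back the conclusion about $\rho_\theta(K) = \{f_p(\theta): p \in K\}$), I would cite their theorem more or less as a black box; where the present family differs from the model one is only that $\rho_\theta$ acts on a $3$-dimensional ball rather than the parameter space of $\mathcal F$ itself, and the bilipschitz embedding hypothesis is exactly designed to remove that discrepancy, so no new analytic input is required.

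\textbf{Finishing the reduction.} With the single-scale estimate in hand, the contradiction is routine: the assumed mass bound $\mu(\rho_\theta^{-1}(H_\theta)) \gtrsim (\log 1/\delta)^{-C}$ integrated against $\nu$ (using $\nu(\Theta) \approx 1$), rewritten as an incidence count between $P$-balls and $\Theta$-tubes via Fubini, must exceed what the cinematic estimate permits once $\delta$ is small, because the exponent budget is governed by $t$ and $\sigma$ while the covering is only $\delta^{-s}$-large with $s < \min\{t,\sigma,1\}$. Sending $\delta \to 0$ along the common subsequence contradicts the assumption $\dim_{\mathrm E}\rho_\theta(K) \le s$ on a $\sigma$-dimensional set, and since $\sigma < s_0$ was arbitrary we conclude $s_0 \le s$, which is the claim. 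The Euclidean part of Theorem \ref{mainLowDimIntro} ($\dim_{\mathrm E}\rho_e(K) = \min\{\dim_{\mathrm E}K,1\}$) then follows once one checks in Section \ref{ss:RestProjToHei} that the concrete maps $\rho_e$ from \eqref{form63}, \eqref{projections} satisfy the hypotheses of Theorem \ref{mainTransversal} with $I$ a small arc of $S^1$.

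\textbf{Main obstacle.} The genuinely hard analytic work — the induction on scales with broad/narrow dichotomy and the cell decomposition exploiting the $C^2$-transversality axiom — is entirely inside \cite{2022arXiv220702259P} and is used as a black box here; the only real obstacle in the present write-up is bookkeeping: verifying that the bilipschitz embedding $p \mapsto f_p$ correctly intertwines the $(\delta,t)$-set / non-concentration conditions on $B \subset \R^3$ with those on $(\mathcal F,\|\cdot\|_{C^2})$ used in \cite{2022arXiv220702259P}, and checking that $\dim_{\mathrm E}\rho_\theta(K)$ translates faithfully into a covering-number statement about $\{f_p(\theta) : p \in K\}$ at the relevant scale. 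These are soft but must be done carefully, since the hypotheses of \cite[Theorem 1.3]{2022arXiv220702259P} are phrased in terms of the cinematic parameter itself rather than an auxiliary domain.
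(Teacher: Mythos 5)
Your proposal takes essentially the same route as the paper: reduce the dimension estimate to a $\delta$-discretised incidence statement, and then run the cinematic-function argument of Pramanik--Yang--Zahl. The discretisation steps you outline (Frostman measures $\nu,\mu$, pigeonholing to a single scale, counting incidences between graph-neighbourhoods $\Gamma_p^\delta$ and tubes over the covering intervals) are exactly the content of \cite[Sections 2.1--2.4]{2022arXiv220702259P}, which the paper's proof-sketch follows line by line, replacing \cite[Proposition 2.1]{2022arXiv220702259P} by its Proposition \ref{prop4}. Your reading of the role of the bilipschitz embedding $p \mapsto f_p$ — intertwining the non-concentration condition on $B \subset \R^3$ with the one on $(\mathcal{F},\|\cdot\|_{C^2})$ and making Proposition \ref{prop4} essentially a restatement of the hypotheses — is also correct.

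There is, however, one concrete point where your claim that ``no new analytic input is required'' overshoots. The argument of \cite[Section 2.2]{2022arXiv220702259P} uses, at \cite[(2.10)]{2022arXiv220702259P}, the lower bound $|\rho_\theta(Q)| \gtrsim \delta$ for the image of a $\delta$-cube $Q \subset \R^3$. This holds automatically when $\rho_\theta(p) = \gamma(\theta)\cdot p$ is linear, but nothing in the hypotheses of Theorem \ref{mainTransversal} prevents a general Lipschitz $\rho_\theta$ from collapsing a $\delta$-cube to a much shorter interval, so \cite[(2.10)]{2022arXiv220702259P} may fail as stated. The paper patches this by replacing $\rho_\theta(Q)$ throughout by the interval $I_\theta(Q) := [\rho_\theta(z_Q)-\delta, \rho_\theta(z_Q)+\delta]$ of length $\sim\delta$ around the image of the centre $z_Q$, and then rechecking that \cite[(2.9)]{2022arXiv220702259P} (with $G_\theta' = N_{L\delta}(G_\theta)$ using the uniform $L$-Lipschitz bound), \cite[(2.12)]{2022arXiv220702259P}, and hence \cite[(2.13)]{2022arXiv220702259P} all survive the substitution. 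You should flag and carry out this modification rather than invoking the proof of \cite[Theorem 1.3]{2022arXiv220702259P} as a literal black box: it is the one place in the reduction where the linearity of the model projections is genuinely used, and it is also where your remark that ``$\dim_{\mathrm{E}}\rho_\theta(K)$ must translate faithfully into a covering-number statement'' actually bites.
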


We only sketch the proof of Theorem \ref{mainTransversal} since it
is virtually the same as the proof of \cite[Theorem
1.3]{2022arXiv220702259P}: this is the special case of Theorem
\ref{mainTransversal}, where
\begin{equation}\label{form60} f_{p}(\theta) = \rho_{\theta}(p) := \gamma(\theta) \cdot p, \qquad p \in \R^{3}, \end{equation}
and $\gamma \colon I \to S^{2}$ parametrises a curve on $S^{2}$
satisfying $\spa\{\gamma,\dot{\gamma},\ddot{\gamma}\} = \R^{3}$
(this condition is needed to guarantee that the family $\{f_{p} :
p \in B\}$ is cinematic for every ball $B \subset \R^{3}$, see the
proof of \cite[Proposition 2.1]{2022arXiv220702259P}).

The proof of \cite[Theorem 1.3]{2022arXiv220702259P} is based on a
reduction to \cite[Theorem 1.7]{2022arXiv220702259P}. This is a
"Kakeya-type" estimate concerning $\delta$-neighbourhoods of
graphs of cinematic functions. More precisely, \cite[Theorem
1.7]{2022arXiv220702259P} is only used via \cite[Proposition
2.1]{2022arXiv220702259P}, a special case of \cite[Theorem
1.7]{2022arXiv220702259P} concerning the cinematic family
$\{\theta \mapsto \gamma(\theta) \cdot p\}_{p \in B}$. We
formulate a more general version of this proposition below: the
only difference is that the cinematic family $\{\theta \mapsto
\gamma(\theta) \cdot p\}_{p \in B}$ is replaced by the family
$\{\theta \mapsto \rho_{\theta}(p)\}_{p \in B}$ relevant for
Theorem \ref{mainTransversal}:
\begin{proposition}\label{prop4} Fix $\epsilon > 0$ and $0 < \alpha \leq \zeta \leq 1$. Let $I \subset \R$ be a compact interval, let $B \subset \R^{3}$ be a ball, and let $\rho_{\theta} \colon B \to \R$ be a family of uniformly Lipschitz functions with the properties assumed in Theorem \ref{mainTransversal}: thus, $\mathcal{F} = \{f_{p} : p \in B\}$ is a cinematic family, and the map $p \mapsto f_{p}$ is a bilipschitz embedding $B \to C^{2}(I)$, where $f_{p}(\theta) := \rho_{\theta}(p)$. Then there exists $\delta_{0} > 0$ such that the following holds for all $\delta \in (0,\delta_{0}]$:

Let $E \subset \R^{2}$ be a
$(\delta,\alpha;\delta^{-\epsilon})_{1} \times
(\delta,\alpha;\delta^{-\epsilon})_{1}$ quasi-product. Let
$Z_{\delta} \subset B$ be a $\delta$-separated set that satisfies
\begin{equation}\label{form62} |Z_{\delta} \cap B(p,r)| \leq \delta^{-\epsilon}(r/\delta)^{\zeta}, \qquad p \in \R^{3}, \, r \geq \delta. \end{equation}
Then
\begin{displaymath} \int_{E} \Big( \sum_{p \in Z_{\delta}} \mathbf{1}_{\Gamma_{p}^{\delta}} \Big)^{3/2} \leq \delta^{2 - \alpha/2 - \zeta/2 - C\epsilon}|Z_{\delta}|, \end{displaymath}
where $C > 0$ is absolute, and $\Gamma_{p}^{\delta}$ is the
$\delta$-neighbourhood of the graph of $f_{p}$. \end{proposition}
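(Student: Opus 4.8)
The plan is to follow the proof of \cite[Proposition 2.1]{2022arXiv220702259P} line by line. That proposition is precisely the special case of the present statement in which $\rho_{\theta}(p) = \gamma(\theta) \cdot p$ for a curve $\gamma \colon I \to S^{2}$ with $\spa\{\gamma,\dot{\gamma},\ddot{\gamma}\} = \R^{3}$; in its proof the curve $\gamma$ is invoked only to verify (i) that $\{\theta \mapsto \gamma(\theta) \cdot p\}_{p \in B}$ is a cinematic family in the sense of Definition \ref{d:cinematic}, and (ii) that $p \mapsto (\theta \mapsto \gamma(\theta) \cdot p)$ is a bilipschitz embedding $B \to C^{2}(I)$. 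Both are now standing hypotheses of Proposition \ref{prop4}, so no substitute for the curve is needed, and the argument transfers with no essential change. Concretely, \cite[Proposition 2.1]{2022arXiv220702259P} is itself deduced in \cite{2022arXiv220702259P} from \cite[Theorem 1.7]{2022arXiv220702259P}, a Kakeya-type inequality for $\delta$-neighbourhoods of graphs of functions in an \emph{arbitrary} cinematic family, whose statement and proof never refer to the curve. So the first and principal step is to apply \cite[Theorem 1.7]{2022arXiv220702259P} directly to the cinematic family $\calF = \{f_{p} : p \in B\}$.

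The remaining work is bookkeeping: matching the hypotheses on $Z_{\delta}$ and $E$ to the input of \cite[Theorem 1.7]{2022arXiv220702259P}. For $Z_{\delta}$, the bilipschitz assumption on $p \mapsto f_{p}$ implies that $\{f_{p} : p \in Z_{\delta}\} \subset C^{2}(I)$ is $\delta$-separated up to absolute constants and obeys the non-concentration bound \eqref{form62} with Euclidean balls $B(p,r) \subset \R^{3}$ replaced by $\|\cdot\|_{C^{2}(I)}$-balls; this is exactly the hypothesis under which \cite[Theorem 1.7]{2022arXiv220702259P} is formulated, and the resulting estimate is translated back to a statement about $Z_{\delta}$ in the same way. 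That $\zeta \leq 1$ while $Z_{\delta}$ ranges over the three-dimensional ball $B$ causes no difficulty, since all the relevant counting takes place in the one-dimensional parameter $\theta \in I$. The quasi-product hypothesis on $E$, and the uniform Lipschitz bound on the $\rho_{\theta}$, enter exactly as in \cite{2022arXiv220702259P}, and the output exponent $2 - \alpha/2 - \zeta/2 - C\epsilon$ is the one produced there.

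For orientation I would recall the geometric mechanism driving \cite[Theorem 1.7]{2022arXiv220702259P}, even though it is imported rather than reproved: if $f_{p}, f_{q} \in \calF$ and $\Delta := \|f_{p} - f_{q}\|_{C^{2}(I)}$, then condition (iii) of Definition \ref{d:cinematic} together with the uniform $C^{2}$-bound (i) forces $\{\theta \in I : |f_{p}(\theta) - f_{q}(\theta)| \leq \delta\}$ to be a union of $O(1)$ intervals of total length $\lesssim (\delta/\Delta)^{1/2}$, so $\Gamma_{p}^{\delta} \cap \Gamma_{q}^{\delta}$ is covered by $O(1)$ essentially $(\delta/\Delta)^{1/2} \times \delta$ rectangles. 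By the bilipschitz assumption $\Delta \sim |p - q|$, so the overlap pattern of $\{\Gamma_{p}^{\delta}\}_{p \in Z_{\delta}}$ is controlled by the Euclidean non-concentration of $Z_{\delta}$ through \eqref{form62}; combined with the doubling of $\calF$ and the structure of $E$, and run through the induction on scales of \cite{2022arXiv220702259P}, this is what yields the claimed $L^{3/2}$ bound.

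The main obstacle is thus not located in the present proof: all the substantive analysis — the induction-on-scales estimate \cite[Theorem 1.7]{2022arXiv220702259P} and its supporting lemmas on intersections of cinematic graphs — is taken off the shelf. The only points requiring care here are the transfer of the $(\delta,\zeta)$-set property of $Z_{\delta}$ across the bilipschitz correspondence $p \leftrightarrow f_{p}$, and the (routine) observation that replacing the explicit curve family by the abstract cinematic family $\calF$ interacts with \cite{2022arXiv220702259P} only through the three axioms of Definition \ref{d:cinematic}.
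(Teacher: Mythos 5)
Your proposal is correct and follows the same route as the paper: both observe that the only substantive work in the proof of \cite[Proposition 2.1]{2022arXiv220702259P} -- namely verifying that the family is cinematic and that $p \mapsto f_p$ is a bilipschitz embedding -- has been absorbed into the hypotheses of Proposition \ref{prop4}, after which the conclusion follows by invoking \cite[Theorem 1.7]{2022arXiv220702259P}. Your additional remarks on transferring the non-concentration bound \eqref{form62} across the bilipschitz correspondence, and on the geometric mechanism behind \cite[Theorem 1.7]{2022arXiv220702259P}, are accurate but are not spelled out in the paper, which simply records the reduction as ``almost trivial''.
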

\begin{proof} The proof of \cite[Proposition 2.1]{2022arXiv220702259P} is easy (given \cite[Theorem 1.7]{2022arXiv220702259P}), but the proof of Proposition \ref{prop4} is almost trivial. Indeed, the first part in the proof of \cite[Proposition 2.1]{2022arXiv220702259P} is to verify that the family $\{\theta \mapsto \rho_{\theta}(p)\}_{p \in B}$ is cinematic in the case $\rho_{\theta}(p) = \gamma(\theta) \cdot p$, but this is already a part of our hypothesis. The second part in the proof of \cite[Proposition 2.1]{2022arXiv220702259P} is to verify that $p \mapsto f_{p}$ is a bilipschitz embedding $B \to C^{2}(I)$, and this is -- again -- part of our hypothesis. In other words, all the work in the proof of \cite[Proposition 2.1]{2022arXiv220702259P} has been made part of the hypotheses of Proposition \ref{prop4}.  \end{proof}

The reduction from \cite[Theorem 1.3]{2022arXiv220702259P} to
\cite[Proposition 2.1]{2022arXiv220702259P} (in our case from
Theorem \ref{mainTransversal} to Proposition \ref{prop4}) is
presented in \cite[Sections 2.1-2.4]{2022arXiv220702259P}, and
does not use the special form \eqref{form60} (for example the
linearity) of the maps $\rho_{\theta} \colon \R^{3} \to \R$: it is
only needed that
\begin{enumerate}
\item the maps $\rho_{\theta}$ are uniformly Lipschitz, for
$\theta \in I$, \item $\sup_{p \in B} \sup_{\theta \in I}
|\partial_{\theta} \rho_{\theta}(p)| < \infty$.
\end{enumerate}
Property (1) is assumed in Theorem \ref{mainTransversal}, whereas
property (2) follows from the assumption that the family
$\mathcal{F}$ is cinematic (and in particular a bounded subset of
$C^{2}(I)$).

The argument in \cite[Sections 2.1-2.4]{2022arXiv220702259P} is
extremely well-written, and our notation is deliberately the same,
so we will not copy the whole proof. We only make a few remarks,
below. If the reader is unfamiliar with the ideas involved, we
warmly recommend reading first the heuristic section \cite[Section
1.2]{2022arXiv220702259P}.

\begin{proof}[Proof sketch of Theorem \ref{mainTransversal}] The argument in \cite[Section 2.1]{2022arXiv220702259P} can be copied verbatim; nothing changes. The most substantial change occurs in \cite[Section 2.2]{2022arXiv220702259P}. Namely, \cite[(2.10)]{2022arXiv220702259P} uses the fact (true in \cite{2022arXiv220702259P}) that the $\rho_{\theta}$-image of a $\delta$-cube $Q \subset \R^{3}$ has length $|\rho_{\theta}(Q)| \gtrsim \delta$. For the general Lipschitz maps $\rho_{\theta}$ in Theorem \ref{mainTransversal} this may not be the case; it would be true for the special maps $\rho_{\theta}$ needed in Theorem \ref{mainLowDim}, so also this part of \cite{2022arXiv220702259P} would work verbatim for these maps.  However, even in the generality of Theorem \ref{mainTransversal} the problem can be completely removed: one only needs to replace every occurrence of $\rho_{\theta}(Q)$ in \cite[Section 2.2]{2022arXiv220702259P} by an interval
\begin{displaymath} I_{\theta}(Q) := [\rho_{\theta}(z_{Q}) - \delta,\rho_{\theta}(z_{Q}) + \delta] \end{displaymath}
of length $\sim \delta$ centred at $\rho_{\theta}(z_{Q})$, where
$z_{Q} \in Q$ is the centre of $Q$. Since $\rho_{\theta}(Q)$ only
appears as a "tool" in \cite[Section 2.2]{2022arXiv220702259P},
the rest of the argument will remain unchanged. Let us, however,
discuss what changes in \cite[Section 2.2]{2022arXiv220702259P}
when $\rho_{\theta}(Q)$ is replaced by $I_{\theta}(Q)$. We assume
familiarity with the notation in \cite{2022arXiv220702259P}.

First and foremost, \cite[(2.9)]{2022arXiv220702259P} remains
valid: whenever $Q \in \mathcal{Q}$ is a cube that intersects
$\rho_{\theta}^{-1}(G_{\theta})$, then
$\dist(\rho_{\theta}(z_{Q}),G_{\theta}) \lesssim L\delta$ by our
assumption that the maps $\rho_{\theta}$ are $L$-Lipschitz.
Therefore,
\begin{displaymath} I_{\theta}(Q) \subset G_{\theta}' := N_{L\delta}(G_{\theta}). \end{displaymath}
This gives \cite[(2.9)]{2022arXiv220702259P} with the slightly
modified definition of $G_{\theta}'$, stated above. Consequently,
also the version of \cite[(2.10)]{2022arXiv220702259P} is true
where $\rho_{\theta}(Q)$ is replaced by $I_{\theta}(Q)$: here the
length bound $|I_{\delta}(Q)| \gtrsim \delta$ is used. Finally, to
deduce \cite[(2.13)]{2022arXiv220702259P} from
\cite[(2.10)]{2022arXiv220702259P}, we need to know that
\cite[(2.12)]{2022arXiv220702259P} remains valid when
$\rho_{\theta}(Q)$ is replaced with $I_{\theta}(Q)$. This is
clear: if $y \in I_{\theta}(Q)$, then $|y - \rho_{\theta}(z_{Q})|
\leq \delta$ by definition, and therefore $(\theta,y) \in
\Gamma_{z_{Q}}^{\delta}$, where
\begin{displaymath} \Gamma_{z} = \{(\theta,\rho_{\theta}(z)) : \theta \in I\}, \qquad z \in B, \end{displaymath}
is the analogue of \cite[(1.12)]{2022arXiv220702259P}, and
$\Gamma_{z}^{\delta}$ is the $\delta$-neighbourhood of
$\Gamma_{z}$. We have now verified
\cite[(2.13)]{2022arXiv220702259P}. The intervals
$\rho_{\theta}(Q)$ or $I_{\theta}(Q)$ play no further role in the
proof. The rest of \cite[Section 2.2]{2022arXiv220702259P} works
verbatim.

The same is also true for \cite[Section 2.3]{2022arXiv220702259P}:
the argument is fairly abstract down to
\cite[(2.19)]{2022arXiv220702259P}, where it is needed that
$\sup_{p \in B} \sup_{\theta \in I} |\partial_{\theta}
\rho_{\theta}(p)| < \infty$. The maps $\rho_{\theta}$ in Theorem
\ref{mainTransversal} satisfy this property automatically, as
noted in (2) above.

Finally, we arrive at the short \cite[Section
2.4]{2022arXiv220702259P}. The only difference is that we need to
apply Proposition \ref{prop4} in place of \cite[Proposition
2.1]{2022arXiv220702259P}. This completes the proof of Theorem
\ref{mainTransversal}. \end{proof}

\subsection{From vertical projections to cinematic functions}\label{ss:RestProjToHei}
We explain how the general projection result, Theorem
\ref{mainTransversal}, can be applied to prove Theorem
\ref{mainLowDim}, which concerns the special projections $\rho_{e} = \pi_{T} \circ \pi_{e}$. Recall that $\pi_{e}$ is the vertical projection
to the plane $\W_{e} = e^{\perp}$. For $e = (e_{1},e_{2}) \in
S^{1}$,
 we write $J(e) := (-e_{2},e_{1}) \in S^{1} \cap e^{\perp}$ is the counterclockwise rotation of $e$ by $\pi/2$. With this notation, the map $\pi_{e}$ has the explicit formula
\begin{equation}\label{form49} \pi_{e}(z,t) = (\langle z,Je \rangle, t + \tfrac{1}{2} \langle z, e \rangle \langle z,Je \rangle), \end{equation}
where $\langle \cdot,\cdot \rangle$ is the Euclidean dot product in $\R^{2}$. In the formula \eqref{form49}, we have also identified each plane $\W_{e}$ with $\R^{2}$ via the map $(yJe,t) \cong (y,t)$. It is worth noting that the distance $d_{\He}$ restricted to the plane $\W_{e}$ (for $e \in S^{1}$ fixed) is bilipschitz equivalent to the parabolic distance on $\R^{2}$, namely $d_{\mathrm{par}}((x,s),(y,t)) = |x - y| + \sqrt{|s - t|}$.

With the explicit expression \eqref{form49} in hand, the nonlinear projections $\rho_{e} = \pi_{T} \circ \pi_{e}$ introduced in \eqref{form63} have the following formula: 
\begin{displaymath} \rho_{e}(z,t) = t + \tfrac{1}{2} \langle z, e \rangle \langle z,Je \rangle, \qquad (z,t) \in \R^{2} \times \R, \, e \in S^{1}, \end{displaymath}
By a slight abuse of notation, we write "$d_{\He}$" for the square root metric on $\R$: thus $d_{\He}(s,t) := \sqrt{s - t}$. The projection $\pi_{T}$ restricted to any fixed plane $\mathbb{W}_{e}$ is a Lipschitz map $(\W_{e},d_{\He}) \to (\R,d_{\He})$, even though $\pi_{T}$ is not "globally" a Lipschitz map $(\He,d_{\He}) \to (\R,d_{\He})$. Therefore $\Hd \pi_{e}(K) \geq \Hd \rho_{e}(K)$ for all $e \in S^{1}$, and the cases $\Hd K \in [0,2]$ of Theorem \ref{mainSimple} follow from Theorem \ref{mainLowDimIntro}, whose contents are repeated here:
\begin{thm}\label{mainLowDim} Let $K \subset \R^{3}$ be Borel, and let $0 \leq s < \min\{\dim_{\mathrm{E}} K,1\}$. Then,
\begin{equation}\label{form46} \dim_{\mathrm{E}} \{e \in S^{1} : \dim_{\mathrm{E}} \rho_{e}(K) \leq s\} \leq s. \end{equation}
As a consequence, for every $0 \leq s < \min\{\Hd K,2\}$,
\begin{equation}\label{form47} \dim_{\mathrm{E}} \{e \in S^{1} : \Hd \rho_{e}(K) \leq s\} \leq \tfrac{s}{2}. \end{equation}
In particular, $\Hd \rho_{e}(K) \geq \min\{\Hd K,2\}$ for $\mathcal{H}^{1}$ almost every $e \in S^{1}$.
\end{thm}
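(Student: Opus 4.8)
The plan is to obtain the restricted-projection estimate \eqref{form46} directly from Theorem~\ref{mainTransversal}, after a routine localization, and then to read off \eqref{form47} and the last assertion using the identity $\Hd E = 2\dim_{\mathrm{E}} E$ (valid for $E \subset \R$ equipped with the square-root metric $d_{\He}(s',t') = \sqrt{s' - t'}$) together with the standard comparison $\Hd K \le 2\dim_{\mathrm{E}} K$.

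First I would localize. On the $t$-axis $\ell_{0} := \{0\} \times \{0\} \times \R$ the maps $\rho_{e}$ restrict to the identity, since $\rho_{e}(0,0,t) = t$; hence if $\dim_{\mathrm{E}}(K \cap \ell_{0}) > s$ then $\dim_{\mathrm{E}} \rho_{e}(K) > s$ for every $e$ and \eqref{form46} is vacuous. We may therefore assume $\dim_{\mathrm{E}}(K \cap \ell_{0}) \le s < \dim_{\mathrm{E}} K = \dim_{\mathrm{E}}(K \setminus \ell_{0})$. Cover $\R^{3} \setminus \ell_{0}$ by countably many open balls $B_{i}$ with $\overline{B_{i}} \cap \ell_{0} = \emptyset$; by countable stability there is an index $i$ with $\dim_{\mathrm{E}}(K \cap B_{i}) > s$, and since $\rho_{e}(K) \supset \rho_{e}(K \cap B_{i})$ it suffices to bound $\dim_{\mathrm{E}}\{e \in S^{1} : \dim_{\mathrm{E}}\rho_{e}(K \cap B_{i}) \le s\}$. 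Covering $S^{1}$ by finitely many arcs $\{e(\theta) : \theta \in I_{j}\}$, with $e(\theta) := (\cos\theta, \sin\theta)$ and $I_{j} \subset \R$ a compact interval, it is then enough to bound $\dim_{\mathrm{E}}\{\theta \in I_{j} : \dim_{\mathrm{E}}\rho_{e(\theta)}(K \cap B_{i}) \le s\}$ for each $j$.

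The main step is to verify, for a fixed such $B_{i}$ and $I_{j}$, that the family $\{\rho_{e(\theta)}\}_{\theta \in I_{j}}$ restricted to $B_{i}$ meets the hypotheses of Theorem~\ref{mainTransversal}. A computation from \eqref{form49} gives, for $p = (x,y,t)$,
\begin{displaymath} f_{p}(\theta) := \rho_{e(\theta)}(p) = t + \tfrac{1}{4}(y^{2} - x^{2})\sin 2\theta + \tfrac{1}{2}xy\cos 2\theta, \end{displaymath}
so each $\rho_{e(\theta)}$ is a polynomial, hence uniformly Lipschitz on the bounded set $B_{i}$ for $\theta \in I_{j}$. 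The map $p \mapsto (t, \tfrac{1}{2}xy, \tfrac{1}{4}(y^{2} - x^{2}))$ has Jacobian determinant $\tfrac{1}{4}(x^{2} + y^{2})$, which is bounded away from $0$ and $\infty$ on $B_{i}$ because $\overline{B_{i}} \cap \ell_{0} = \emptyset$; composing with the linear isomorphism onto $\spa\{1, \cos 2\theta, \sin 2\theta\} \subset C^{2}(I_{j})$ — a $3$-dimensional space on which all norms are comparable, those three functions being linearly independent on any interval of positive length — shows $p \mapsto f_{p}$ is a bilipschitz embedding $B_{i} \to C^{2}(I_{j})$, and doubling of $\mathcal{F} := \{f_{p} : p \in B_{i}\}$ is inherited from $B_{i} \subset \R^{3}$. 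For the transversality condition in Definition~\ref{d:cinematic}(3), write $(f_{p} - f_{q})(\theta) = a + b\cos 2\theta + c\sin 2\theta$, so that $\|f_{p} - f_{q}\|_{C^{2}(I_{j})} \sim |a| + |b| + |c|$; since $\tfrac{1}{2}(f_{p} - f_{q})'(\theta) = -b\sin 2\theta + c\cos 2\theta$ and $-\tfrac{1}{4}(f_{p} - f_{q})''(\theta) = b\cos 2\theta + c\sin 2\theta$ have squares summing to $b^{2} + c^{2}$ at every $\theta$, one has $|(f_{p} - f_{q})'(\theta)| + |(f_{p} - f_{q})''(\theta)| \gtrsim |b| + |c|$ uniformly in $\theta$, and combining this with $|(f_{p} - f_{q})(\theta)| \ge |a| - \sqrt{b^{2} + c^{2}}$ gives $\inf_{\theta \in I_{j}}(|(f_{p} - f_{q})(\theta)| + |(f_{p} - f_{q})'(\theta)| + |(f_{p} - f_{q})''(\theta)|) \gtrsim |a| + |b| + |c| \sim \|f_{p} - f_{q}\|_{C^{2}(I_{j})}$. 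Hence $\mathcal{F}$ is a cinematic family, and Theorem~\ref{mainTransversal}, applied with $B = B_{i}$, $I = I_{j}$ and the set $K \cap B_{i}$ (using $0 \le s < 1$ and $s < \dim_{\mathrm{E}}(K \cap B_{i})$), yields $\dim_{\mathrm{E}}\{\theta \in I_{j} : \dim_{\mathrm{E}}\rho_{e(\theta)}(K \cap B_{i}) \le s\} \le s$. Summing over the finitely many arcs and undoing the localization proves \eqref{form46}.

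Finally I would deduce the remaining assertions. Because $\Hd E = 2\dim_{\mathrm{E}} E$ for $E \subset \R$ with the square-root metric, $\{e : \Hd\rho_{e}(K) \le s\} = \{e : \dim_{\mathrm{E}}\rho_{e}(K) \le s/2\}$; and $\Hd K \le 2\dim_{\mathrm{E}} K$ gives $\min\{\Hd K, 2\} \le \min\{2\dim_{\mathrm{E}} K, 2\}$, so $s < \min\{\Hd K, 2\}$ forces $s/2 < \min\{\dim_{\mathrm{E}} K, 1\}$ and \eqref{form46} yields $\dim_{\mathrm{E}}\{e : \Hd\rho_{e}(K) \le s\} \le s/2$, which is \eqref{form47}. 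The exceptional set $\{e : \Hd\rho_{e}(K) < \min\{\Hd K, 2\}\}$ is the countable union over $n \ge 1$ of the sets $\{e : \Hd\rho_{e}(K) \le \min\{\Hd K, 2\} - 1/n\}$, each of Euclidean dimension $< 1$, hence $\mathcal{H}^{1}$-null, which gives the final statement. I expect the only genuinely delicate point to be the verification of the cinematic hypotheses — in particular the transversality condition, and the fact that the bilipschitz embedding property degenerates exactly along the $t$-axis, which is what forces both the separate (trivial) treatment of $K \cap \ell_{0}$ and the restriction to balls disjoint from $\ell_{0}$; the remainder is bookkeeping and a direct appeal to Theorem~\ref{mainTransversal}.
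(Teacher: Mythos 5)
Your proof is correct and reaches the same destination as the paper's, namely via Theorem~\ref{mainTransversal}, but the way you verify the cinematic hypothesis for the family $\{f_p\}$ is genuinely different and, I think, cleaner. The paper first checks the transversality inequality at $\theta = 0$ via the inverse function theorem (Lemma~\ref{lemma3}, Jacobian $2|z|^2$), then propagates it to all $\theta$ using the rotation covariance identity \eqref{form56} and a compactness argument (Proposition~\ref{prop3}). You instead observe that $f_p(\theta) = t + \tfrac14(y^2 - x^2)\sin 2\theta + \tfrac12 xy\cos 2\theta$ lands in the $3$-dimensional trigonometric subspace $\spa\{1,\cos 2\theta,\sin 2\theta\} \subset C^2(I)$, so that $f_p - f_q = a + b\cos 2\theta + c\sin 2\theta$, and the transversality estimate follows from the Pythagorean cancellation $(-b\sin 2\theta + c\cos 2\theta)^2 + (b\cos 2\theta + c\sin 2\theta)^2 = b^2 + c^2$ uniformly in $\theta$ — no rotation invariance, no compactness argument. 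Your localization is also simpler: you cover $\R^3 \setminus \ell_0$ by countably many balls away from the $t$-axis and use countable stability to find one ball with $\dim_{\mathrm{E}}(K \cap B_i) > s$, rather than the paper's choice of a "density point" $p_0$ with $\dim_{\mathrm{E}}(K \cap B(p_0,r)) > \dim_{\mathrm{E}} K - \epsilon$ for all $r$ followed by $\epsilon \to 0$. One small point you should make explicit: boundedness of the Jacobian away from $0$ and $\infty$ does not by itself give that $p \mapsto (t,\tfrac12 xy,\tfrac14(y^2-x^2))$ is bilipschitz on $B_i$ — you also need injectivity, which holds because the only identifications of the planar map $(x,y)\mapsto(\tfrac12 xy,\tfrac14(y^2-x^2))$ are $z \mapsto -z$, and a convex set disjoint from the origin cannot contain an antipodal pair; alternatively just take the $B_i$ small enough that the inverse function theorem applies directly. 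With that caveat, the argument is sound, and your explicit trigonometric verification is an attractive alternative to the paper's rotation-invariance route: it buys a shorter, self-contained check at the cost of exploiting the very special $\cos/\sin$ structure rather than a soft symmetry principle.
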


\begin{remark}\label{rem3} We explain why \eqref{form46} implies \eqref{form47}. It is well-known that
\begin{displaymath} \Hd K \leq 2 \dim_{\mathrm{E}} K.  \end{displaymath}
for all sets $K \subset \He$. This simply follows from the fact that the identity map $(\He,d_{\mathrm{Euc}}) \to (\He,d_{\He})$ is locally $\tfrac{1}{2}$-H\"older continuous. Therefore, if $0 \leq s < \min\{\Hd K,2\}$, as in \eqref{form47}, we have $0 \leq \tfrac{s}{2} < \min\{\dim_{\mathrm{E}} K,1\}$, and \eqref{form46} is applicable. Since
\begin{displaymath} \{e \in S^{1} : \Hd \rho_{e}(K) \leq s\} = \{e \in S^{1} : \dim_{\mathrm{E}} \rho_{e}(K) \leq \tfrac{s}{2}\} \end{displaymath}
(the square root metric on $\R$ doubles Euclidean dimension), we have
\begin{displaymath} \dim_{\mathrm{E}} \{e \in S^{1} : \Hd \rho_{e}(K) \leq s\} = \dim_{\mathrm{E}} \{e \in S^{1} : \dim_{\mathrm{E}} \rho_{e}(K) \leq \tfrac{s}{2}\} \stackrel{\eqref{form46}}{\leq} \tfrac{s}{2}. \end{displaymath}
This is what we claimed in \eqref{form47}. \end{remark}

For the remainder of this section, we focus on proving the Euclidean statement \eqref{form46}.
This is chiefly based on verifying that the projections $\rho_{e} \colon \R^{3} \to \R$
 give rise to a \emph{cinematic family of functions}, as in
 Definition \ref{d:cinematic}.
Let us introduce the relevant cinematic family. We re-parametrise
the projections $\rho_{e}$, $e \in S^{1}$, as $\rho_{\theta}$,
$\theta \in \R$, where
\begin{displaymath} \rho_{\theta} := \rho_{e(\theta)}, \qquad e(\theta) := (\cos \theta,\sin \theta). \end{displaymath}
With this notation, we define the following functions $f_{p} \colon \R \to \R$, $p \in \R^{3}$:
\begin{equation}\label{form53} f_{p}(\theta) := \rho_{\theta}(p) := t + \tfrac{1}{2}\langle z,e(\theta) \rangle \langle z,Je(\theta) \rangle, \qquad p = (z,t) \in \R^{3}. \end{equation}
\begin{proposition}\label{prop:cinematic} Let $p_{0} \in \R^{3} \, \setminus \, \{(0,0,t) : t \in \R\}$.
Then, there exists a radius $r = r(p_{0}) > 0$ such that $\mathcal{F}(B(p_{0},r)) := \{f_{p} : p \in B(p_{0},r)\}$ is a cinematic family.
 \end{proposition}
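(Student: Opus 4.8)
The plan is to compute the functions $f_{p}$ explicitly and to exploit that, despite the nonlinearity of $\pi_{e}$, each $f_{p}$ is \emph{affine} in a suitable reparametrisation of $p$. Using $e(\theta) = (\cos\theta,\sin\theta)$, $Je(\theta) = (-\sin\theta,\cos\theta)$, expansion of \eqref{form53} together with the double-angle formulae gives, for $p = (z_{1},z_{2},t) \in \R^{3}$,
\[ f_{p}(\theta) \,=\, t + \tfrac{1}{2}z_{1}z_{2}\cos 2\theta + \tfrac{1}{4}(z_{2}^{2} - z_{1}^{2})\sin 2\theta \,=\, c_{0}(p) + c_{1}(p)\cos 2\theta + c_{2}(p)\sin 2\theta, \]
where $\Psi(p) := (c_{0}(p),c_{1}(p),c_{2}(p)) = \big(t,\tfrac{1}{2}z_{1}z_{2},\tfrac{1}{4}(z_{2}^{2} - z_{1}^{2})\big)$; in polar coordinates $z = |z|(\cos\alpha,\sin\alpha)$ this reads $f_{p}(\theta) = t - \tfrac{1}{4}|z|^{2}\sin(2\theta - 2\alpha)$. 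Thus every $f_{p}$ lies in the $3$-parameter affine family spanned by $1,\cos 2\theta,\sin 2\theta$, and under $f_{p} \leftrightarrow \Psi(p)$ the family $\mathcal{F}(B(p_{0},r))$ corresponds to the image $\Psi(B(p_{0},r)) \subset \R^{3}$.

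Next I would establish the comparison $\|f_{p} - f_{q}\|_{C^{2}(I)} \sim |\Psi(p) - \Psi(q)|$, with absolute implied constants, for every nondegenerate compact interval $I$. The upper bound is trivial since $1,\cos 2\theta,\sin 2\theta$ and their first two derivatives are bounded on $I$. The lower bound must be proved \emph{pointwise in $\theta$}: writing $h := f_{p} - f_{q}$ and $(a_{0},a_{1},a_{2}) := \Psi(p) - \Psi(q)$, the identity $h''(\theta) = -4(h(\theta) - a_{0})$ recovers $a_{0}$ from $(h(\theta),h''(\theta))$, while $(a_{1},a_{2})$ is recovered from $(h'(\theta),h''(\theta))$ by an invertible linear map whose norm is independent of $\theta$ (its matrix is a rotation up to fixed scalars). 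Hence $|\Psi(p)-\Psi(q)| \lesssim |h(\theta)| + |h'(\theta)| + |h''(\theta)|$ for every $\theta \in I$.

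With these two facts the defining properties of Definition \ref{d:cinematic} for $\mathcal{F}(B(p_{0},r))$ follow quickly -- and in fact for any $p_{0}$ and any $r > 0$. Property (1) holds because $\Psi(B(p_{0},r))$ is bounded in $\R^{3}$. Property (2) holds because $f_{p} \mapsto \Psi(p)$ is, by the comparison, a bilipschitz bijection onto a subset of Euclidean $\R^{3}$, and subsets of $\R^{3}$ are doubling. Property (3) follows by taking $\inf_{\theta \in I}$ in the pointwise estimate above and then applying the upper comparison, giving $\inf_{\theta\in I}(|h|+|h'|+|h''|) \gtrsim |\Psi(p)-\Psi(q)| \gtrsim \|f_{p}-f_{q}\|_{C^{2}(I)}$. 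The hypothesis $p_{0} \notin \{(0,0,t) : t \in \R\}$ and the smallness of $r = r(p_{0})$ enter only to secure the additional property that Theorem \ref{mainTransversal} requires alongside the cinematic condition, namely that $p \mapsto f_{p}$ be a bilipschitz \emph{embedding}: the Jacobian of $\Psi$ in the $z$-variables equals a nonzero multiple of $|z|^{2}$, so for $r$ small enough that $B(p_{0},r)$ misses the vertical axis, $\Psi$ is a diffeomorphism onto its image there, and the comparison upgrades this to a bilipschitz embedding of $(B(p_{0},r),|\cdot|)$ into $(C^{2}(I),\|\cdot\|_{C^{2}(I)})$.

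I foresee no genuine obstacle: the whole content is the explicit formula for $f_{p}$ and the observation that its three coefficients can be read off from the value and first two derivatives at a single $\theta$, which is precisely what makes the transversality estimate (3) hold pointwise in $\theta$ and hence survive the infimum. Everything else is routine bookkeeping.
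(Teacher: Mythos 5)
Your proof is correct, and it takes a genuinely different route from the paper. The paper verifies condition (3) of Definition \ref{d:cinematic} in two stages: first Lemma \ref{lemma3} applies the inverse function theorem to $p \mapsto (f_p(0),f_p'(0),f_p''(0))$, whose Jacobian is $\pm 2|z|^2$ (hence the need for $p_0$ off the $t$-axis and $r$ small); then Proposition \ref{prop3} propagates this from $\theta = 0$ to all $\theta$ via the rotation invariance \eqref{form56} and a compactness argument. You instead observe that $f_p$ lies in the $3$-dimensional affine span of $\{1,\cos 2\theta,\sin 2\theta\}$ with coefficients $\Psi(p) = (t,\tfrac12 z_1 z_2,\tfrac14(z_2^2-z_1^2))$, and that for any trigonometric polynomial $h = a_0 + a_1\cos 2\theta + a_2\sin 2\theta$ the triple $(a_0,a_1,a_2)$ is recovered from $(h(\theta),h'(\theta),h''(\theta))$ by a $\theta$-independently bounded inverse (indeed $a_0 = h + \tfrac14 h''$, and the $2\times 2$ block for $(a_1,a_2)$ has constant determinant $8$). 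This gives the two-sided comparison $\|f_p - f_q\|_{C^2(I)} \sim |\Psi(p)-\Psi(q)|$ \emph{globally}, and condition (3) drops out immediately, with no reference to $p_0$ or $r$.

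This buys a strictly stronger conclusion than what the paper records: $\mathcal{F}(B)$ is cinematic for \emph{every} ball $B \subset \R^3$, including balls meeting the $t$-axis. You are also right that the true role of the hypothesis $p_0 \notin \{(0,0,t)\}$ is to make $p \mapsto f_p$ injective (and then bilipschitz) on $B(p_0,r)$ — the map $\Psi$ has Jacobian $\propto |z|^2$ and moreover identifies $(z,t)$ with $(-z,t)$, so it cannot be an embedding near the axis. The paper's Proposition \ref{prop3} conflates the two issues because it compares $\inf_\theta(\cdots)$ directly against $|p-q|$ rather than against $\|f_p - f_q\|_{C^2(I)}$; your decomposition through $\Psi$ cleanly separates the cinematic condition (which is intrinsic to the function family) from the embedding property (which is a feature of the parametrization by $p$, and is what Theorem \ref{mainTransversal} additionally needs). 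The paper's inverse-function argument is more robust in the sense that it does not require noticing the exact trigonometric form of $f_p$, but given that form your argument is shorter and yields more.
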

The compact interval appearing in conditions (1)-(3) of Definition
\ref{d:cinematic} can be taken to be $[0,2\pi]$ -- this makes no
difference, since the functions $f_{p}$ are $2\pi$-periodic. It
turns out that the conditions (1)-(2) are satisfied for the family
$\mathcal{F}(B)$, whenever $B \subset \R^{3}$ is an arbitrary
ball. To verify condition (3), we will need to assume that $B$
lies outside the $t$-axis; we will return to this a little later.
We first compute the derivatives of the functions in
$\mathcal{F}$. For $f_{p} \in \mathcal{F}$, we have
\begin{displaymath} f_{p}'(\theta) = \tfrac{1}{2}\langle z,e'(\theta) \rangle \langle z,Je(\theta) \rangle + \tfrac{1}{2} \langle z,e(\theta) \rangle\langle z,Je'(\theta) \rangle.  \end{displaymath}
This expression can be further simplified by noting that $e'(\theta) = Je(\theta)$, and $Je'(\theta) = -e(\theta)$. Therefore,
\begin{equation}\label{form51} f_{p}'(\theta) = \tfrac{1}{2}\langle z,Je(\theta) \rangle^{2} - \tfrac{1}{2}\langle z,e(\theta) \rangle^{2}. \end{equation}
From this expression, we may compute the second derivative:
\begin{equation}\label{form52} f_{p}''(\theta) = \langle z,Je(\theta) \rangle \langle z,Je'(\theta) \rangle - \langle z,e(\theta) \rangle \langle z,e'(\theta) \rangle = -2\langle z,e(\theta) \rangle \langle z,Je(\theta) \rangle. \end{equation}
The formulae \eqref{form53}-\eqref{form52} immediately show that the map $p \mapsto f_{p}$ is locally Lipschitz:
\begin{equation}\label{form58} \sup_{\theta \in \R} |f_{p}(\theta) - f_{q}(\theta)| + |f_{p}'(\theta) - f_{q}'(\theta)| + |f_{p}''(\theta) - f_{q}''(\theta)| \lesssim_{B} |p - q|, \qquad p,q \in B. \end{equation}
This implies conditions (1)-(2) in  Definition \ref{d:cinematic}
for the family $\mathcal{F}(B)$. Regarding condition (3) in
Definition \ref{d:cinematic}, we claim the following:

\begin{proposition}\label{prop3} If $p_{0} \in \R^{3} \, \setminus \, \{(0,0,t) : t \in \R\}$, there exists a radius $r = r(p_{0}) > 0$ and a constant $c = c(p_{0}) > 0$ such that
\begin{equation}\label{form54} |f_{p}(\theta) - f_{q}(\theta)| + |f_{p}'(\theta) - f_{q}'(\theta)| + |f_{p}''(\theta) - f_{q}''(\theta)| \geq c|p - q| \end{equation}
for all $p,q \in B(p_{0},r)$ and $\theta \in \R$. \end{proposition}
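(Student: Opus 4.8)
The plan is to reduce Proposition \ref{prop3} to a pointwise-in-$\theta$ lower bound for the map $p \mapsto (f_p(\theta), f_p'(\theta), f_p''(\theta))$, i.e. to show that for each fixed $\theta$ the differential of this map at $p_0$ is injective, uniformly in $\theta$, and then to upgrade this to the claimed bilipschitz estimate on a small ball $B(p_0,r)$ by a compactness argument. Write $p = (z,t) = (z_1,z_2,t) \in \R^3$. Using \eqref{form53}, \eqref{form51}, \eqref{form52}, the map $G_\theta \colon \R^3 \to \R^3$, $G_\theta(p) := (f_p(\theta), f_p'(\theta), f_p''(\theta))$, is a polynomial map in $p$, affine in $t$ and quadratic in $z$. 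Its Jacobian with respect to $p$ is readily computed: since $f_p$ depends on $t$ only through the additive term $t$, while $f_p'$ and $f_p''$ do not involve $t$, the first column of $DG_\theta$ is $(1,0,0)^{T}$; the relevant content is therefore the $2 \times 2$ minor coming from the $z$-derivatives of $(f_p'(\theta), f_p''(\theta))$. A direct computation gives, writing $a := \langle z, e(\theta)\rangle$ and $b := \langle z, Je(\theta)\rangle$, that $\nabla_z f_p'(\theta) = b\, Je(\theta) - a\, e(\theta)$ and $\nabla_z f_p''(\theta) = -2b\, e(\theta) - 2a\, Je(\theta)$, so the determinant of this $2\times 2$ block equals (up to sign and constants) $a^2 + b^2 = |z|^2$. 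Hence $\det DG_\theta(p) = c\,|z|^{2}$ for an absolute constant $c \neq 0$, independent of $\theta$.

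The point of the hypothesis $p_0 \notin \{(0,0,t) : t \in \R\}$ is now clear: it says exactly that $|z_0| > 0$, where $p_0 = (z_0,t_0)$. Choose $r = r(p_0) > 0$ small enough that $|z| \geq |z_0|/2 > 0$ for all $p = (z,t) \in B(p_0,r)$. Then $|\det DG_\theta(p)| \geq c'(p_0) > 0$ for all such $p$ and all $\theta \in \R$. Since moreover all entries of $DG_\theta(p)$ are bounded on $B(p_0,r)$ uniformly in $\theta$ (they are linear in $z$, and $z$ ranges in a bounded set), the operator norm of $DG_\theta(p)^{-1}$ is bounded above uniformly in $p \in B(p_0,r)$ and $\theta \in \R$; equivalently, $|DG_\theta(p)\,v| \geq c''(p_0)|v|$ for all $v \in \R^3$. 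Integrating along the segment from $q$ to $p$ (which lies in the convex set $B(p_0,r)$) yields
\begin{displaymath}
|G_\theta(p) - G_\theta(q)| \geq c''(p_0)\,|p - q|, \qquad p,q \in B(p_0,r), \ \theta \in \R.
\end{displaymath}
Finally, bounding the $\ell^2$-norm of the three coordinate differences by their sum (up to an absolute constant), $|f_p(\theta) - f_q(\theta)| + |f_p'(\theta) - f_q'(\theta)| + |f_p''(\theta) - f_q''(\theta)| \gtrsim |G_\theta(p) - G_\theta(q)| \geq c''(p_0)|p-q|$, which is \eqref{form54} with $c = c(p_0) \sim c''(p_0)$, after taking the supremum — or rather, since the bound holds for every $\theta$, any fixed value of $\theta$ suffices, and a fortiori the supremum over $\theta$ does.

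The only step requiring genuine care is the determinant computation $\det DG_\theta(p) = c|z|^2$; everything else is soft (convexity of balls, compactness/boundedness of the Jacobian entries, and the elementary inequality between $\ell^1$ and $\ell^2$ norms on $\R^3$). I expect the main obstacle — such as it is — to be organizing the $z$-gradients of $f_p'(\theta)$ and $f_p''(\theta)$ cleanly, using the orthonormality of $\{e(\theta), Je(\theta)\}$ and the identities $e'(\theta) = Je(\theta)$, $Je'(\theta) = -e(\theta)$ already recorded before \eqref{form51}, so that the cancellation producing $a^2 + b^2$ is transparent. No decoupling, incidence estimates, or cinematic-family machinery is needed here: this proposition is purely a non-degeneracy computation, and it is exactly the place where the exceptional $t$-axis is excluded.
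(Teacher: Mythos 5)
Your core computation is correct and coincides with the paper's: writing $G_\theta(p) := (f_p(\theta), f_p'(\theta), f_p''(\theta))$, you find $\det DG_\theta(p) = 2|z|^2$ for every $\theta$, which at $\theta = 0$ is exactly the determinant appearing in the proof of Lemma \ref{lemma3}. Where your organization differs from the paper's is worth noting: the paper establishes the estimate at $\theta = 0$ only (Lemma \ref{lemma3}), then transfers it to general $\theta$ via the rotation identity \eqref{form56} and a compactness argument over $[0,2\pi]$. You instead compute the Jacobian at general $\theta$ and observe that its determinant is $\theta$-independent, handling all $\theta$ in a single stroke. That is a genuine simplification: the rotational symmetry which the paper exploits explicitly is, in your argument, already encoded in the determinant formula.

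There is, however, a gap in the final step. You deduce from the pointwise bound $|DG_\theta(p)v| \geq c''(p_0)|v|$, valid for all $p \in B(p_0,r)$ and all $\theta$, that "integrating along the segment" gives $|G_\theta(p) - G_\theta(q)| \geq c''(p_0)|p-q|$. This implication is not valid in general. Writing $\gamma(s) = q + s(p-q)$ and $G_\theta(p) - G_\theta(q) = \int_0^1 DG_\theta(\gamma(s))(p-q)\, ds$, the integrand has magnitude $\geq c''|p-q|$ at every $s$, but the vectors $DG_\theta(\gamma(s))(p-q)$ can rotate as $s$ varies and partially cancel in the integral; a pointwise lower bound on a differential is not the same as a bilipschitz lower bound for the map. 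Your choice of $r$ (only constrained so that $|z| \geq |z_0|/2$ on the ball) does not preclude this: it controls the size of $\det DG_\theta$ on the ball but says nothing about how much $DG_\theta$ itself varies across the ball, which is what governs the cancellation.

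The repair is standard and short, but it must be made explicit, and it is precisely the content of the quantitative inverse function theorem that Lemma \ref{lemma3} invokes. Since the entries of $DG_\theta(p)$ are affine in $z$ with coefficients that are bounded trigonometric functions of $\theta$, the map $(p,\theta) \mapsto DG_\theta(p)$ is Lipschitz in $p$ with a constant uniform over $\theta$ (and $2\pi$-periodic in $\theta$). Shrink $r$ further so that $\|DG_\theta(p) - DG_\theta(p_0)\| \leq \tfrac{1}{2}c''(p_0)$ for all $p \in B(p_0,r)$ and all $\theta$. Then expand $G_\theta(p) - G_\theta(q) = DG_\theta(p_0)(p-q) + \int_0^1 \big[DG_\theta(\gamma(s)) - DG_\theta(p_0)\big](p-q)\, ds$; the main term has magnitude $\geq c''(p_0)|p-q|$ while the error is $\leq \tfrac{1}{2}c''(p_0)|p-q|$, giving $|G_\theta(p) - G_\theta(q)| \geq \tfrac{1}{2}c''(p_0)|p-q|$ uniformly in $\theta$. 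With this patch your argument is complete, and arguably cleaner than the paper's rotation-plus-compactness reduction.
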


We start with the following lemma:
\begin{lemma}\label{lemma3} For every $p_{0} \in \R^{3} \, \setminus \, \{(0,0,t) : t \in \R\}$ there exists a constant $c > 0$ and a radius $r > 0$ such that the following holds:
\begin{equation}\label{form55} |f_{p}(0) - f_{q}(0)| + |f_{p}'(0) - f_{q}'(0)| + |f_{p}''(0) - f_{q}''(0)| \geq c|p - q|, \qquad p,q \in B(p_{0},r). \end{equation}
\end{lemma}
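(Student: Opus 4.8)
The plan is to reduce \eqref{form55} to a statement about a polynomial in $\theta$ never vanishing identically, and then to use compactness plus continuity of the coefficients to extract a uniform constant on a small ball. Write $p = (z,t)$ and $q = (w,s)$ with $z,w \in \R^2$. From the formulae \eqref{form53}, \eqref{form51}, \eqref{form52}, at $\theta = 0$ we have $e(0) = (1,0)$ and $Je(0) = (0,1)$, so if $z = (z_1,z_2)$ then $f_p(0) = t + \tfrac12 z_1 z_2$, $f_p'(0) = \tfrac12 z_2^2 - \tfrac12 z_1^2$, and $f_p''(0) = -2 z_1 z_2$. Subtracting the corresponding expressions for $q$, the left-hand side of \eqref{form55} is
\begin{displaymath}
\left| (t - s) + \tfrac12(z_1 z_2 - w_1 w_2) \right| + \tfrac12 \left| (z_2^2 - z_1^2) - (w_2^2 - w_1^2) \right| + 2 \left| z_1 z_2 - w_1 w_2 \right|.
\end{displaymath}

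First I would argue that the quantity $Q(p,q) := |f_p(0) - f_q(0)| + |f_p'(0) - f_q'(0)| + |f_p''(0) - f_q''(0)|$ is comparable, up to absolute constants, to $|t - s| + |z_1^2 - w_1^2 - z_2^2 + w_2^2| + |z_1 z_2 - w_1 w_2|$: the $t$-difference is recovered from the first term once the other two control $|z_1 z_2 - w_1 w_2|$. So it suffices to show that the map $\Phi(z,t) := (t, z_1^2 - z_2^2, z_1 z_2)$ — equivalently $(t, \operatorname{Re}(z_1 + i z_2)^2 \text{-type combinations})$ — is bilipschitz on a small ball $B(p_0, r)$ whenever $p_0 = (z^0, t^0)$ has $z^0 \neq 0$. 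The Jacobian of the map $z \mapsto (z_1^2 - z_2^2, z_1 z_2)$ is $\begin{pmatrix} 2 z_1 & -2 z_2 \\ z_2 & z_1 \end{pmatrix}$, with determinant $2 z_1^2 + 2 z_2^2 = 2|z|^2$, which is nonzero precisely when $z \neq 0$. Hence $\Phi$ is a local diffeomorphism near $p_0$, so it is bilipschitz on a sufficiently small ball $B(p_0, r)$, and this gives \eqref{form55} with a constant $c$ depending on $p_0$ (through a lower bound for $|z^0|$ and the radius $r$).

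The main (and really only) technical point to be careful about is the passage from "$\Phi$ is a local diffeomorphism" to the \emph{lower} bilipschitz bound $Q(p,q) \gtrsim |p - q|$ holding for \emph{all} pairs $p, q \in B(p_0, r)$, not just infinitesimally: this is the standard fact that a $C^1$ map with everywhere-invertible derivative on a convex neighbourhood, with a uniform lower bound on the smallest singular value of the derivative there, is bilipschitz on that neighbourhood (by the mean value inequality applied along segments). Since $z \mapsto 2|z|^2$ is continuous and positive at $z^0 \neq 0$, we can choose $r = r(p_0)$ small enough that $2|z|^2 \geq c_0(p_0) > 0$ on the ball, and that the derivative varies little; this yields the uniform constant. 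I would also note that the reduction of the three-term sum $Q$ to the expression involving $\Phi$ loses only absolute constants, so the final $c$ in \eqref{form55} still depends only on $p_0$. This proves Lemma \ref{lemma3}; Proposition \ref{prop3} then follows by the same argument applied at a general $\theta$ (replacing $e(0)$ by $e(\theta)$ rotates the coordinates, so the Jacobian determinant is again $2|z|^2$) together with a compactness argument in $\theta$, and Proposition \ref{prop:cinematic} follows by combining Proposition \ref{prop3} with the already-established conditions (1)--(2) from \eqref{form58}.
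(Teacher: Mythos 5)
Your proof is correct and follows essentially the same route as the paper: compute $(f_p(0), f_p'(0), f_p''(0))$ explicitly, observe that the resulting map $\R^3 \to \R^3$ has Jacobian determinant $2|z|^2$, hence nonvanishing off the $t$-axis, and invoke the inverse function theorem to get a uniform lower bilipschitz bound on a small ball. Your preliminary reduction of $Q(p,q)$ to the simpler map $\Phi$, and your extra care in explaining why the nonvanishing Jacobian yields a \emph{uniform} constant on a whole ball (shrinking $r$ so the derivative varies little), are harmless refinements of the same argument.
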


\begin{proof} Recall that $e(0) = (1,0)$ and $Je(0) = (0,1)$. We then define $F \colon \R^{3} \to \R^{3}$ by
\begin{displaymath} F(p) := (f_{p}(0),f_{p}'(0),f_{p}''(0)) = (t + \tfrac{1}{2}z_{1}z_{2},\tfrac{1}{2}(z_{2}^{2} - z_{1}^{2}),-2z_{1}z_{2}), \qquad p = (z,t) \in \R^{3}. \end{displaymath}
Then, we note that $|\mathrm{det} DF(p)| = 2|z|^{2}$, so in particular the Jacobian of $F$ is non-vanishing outside the $t$-axis. Now \eqref{form55} follows from the inverse function theorem. \end{proof}

We then prove Proposition \ref{prop3}:

\begin{proof}[Proof of Proposition \ref{prop3}] To deduce \eqref{form54} from \eqref{form55}, we record the following rotation invariance:
\begin{equation}\label{form56} f^{(k)}_{R_{\varphi}(p)}(\theta + \varphi) = f^{(k)}_{p}(\theta), \qquad p \in \R^{3}, \, \theta,\varphi \in \R. \end{equation}
Here $R_{\varphi}(z,t) := (e^{i\varphi}z,t)$ is a counterclockwise rotation around the $t$-axis. The proof is evident from the formulae \eqref{form53}-\eqref{form52}, and noting that
\begin{displaymath} \langle e^{i\varphi}z,e(\theta + \varphi)\rangle = \langle z,e(\theta) \rangle \quad \text{and} \quad \langle e^{i\varphi}z,Je(\theta + \varphi) \rangle = \langle z,Je(\theta) \rangle. \end{displaymath}
Now we are in a position to conclude the proof of \eqref{form54}. Fix $p_{0} \in \R^{3} \, \, \setminus \, \{(0,0,t) : t \in \R\}$ and $\theta_{0} \in \R$. Then, apply Lemma \ref{lemma3} to the point
\begin{displaymath} R_{-\theta_{0}}(p_{0}) \in \R^{3} \, \setminus \, \{(0,0,t) : t \in \R\}. \end{displaymath}
This yields a constant $c = c(p_{0},\theta_{0}) > 0$ and a radius $r_{0} = r_{0}(p_{0},\theta_{0}) > 0$ such that
\begin{equation}\label{form57} |f_{p}(0) - f_{q}(0)| + |f_{p}'(0) - f_{q}'(0)| + |f_{p}''(0) - f_{q}''(0)| \geq c|p - q| \end{equation}
for all $p,q \in B(R_{-\theta_{0}}(p_{0}),2r_{0})$. Next, we choose $I(\theta_{0}) = [\theta_{0} - r_{1},\theta_{0} + r_{1}]$ to be a sufficiently short interval around $\theta_{0}$ such that the following holds:\begin{displaymath} R_{-\theta}(p),R_{-\theta}(q) \in B(R_{-\theta_{0}}(p_{0}),2r_{0}), \qquad p,q \in B(p_{0},r_{0}), \, \theta \in I(\theta_{0}). \end{displaymath}
Then, it follows from a combination of \eqref{form56} and \eqref{form57} that
\begin{displaymath} \sum_{k = 0}^{2} |f_{p}^{(k)}(\theta) - f_{q}^{(k)}(\theta)| \stackrel{\eqref{form56}}{=} \sum_{k = 0}^{2} |f_{R_{-\theta}(p)}^{(k)}(0) - f_{R_{-\theta}(q)}^{(k)}(0)| \stackrel{\eqref{form57}}{\geq} c|R_{-\theta}(p) - R_{-\theta}(q)| = c|p - q| \end{displaymath}
for all $p,q \in B(p_{0},r_{0})$ and all $\theta \in I(\theta_{0})$. This completes the proof of \eqref{form54} of all $\theta \in I(\theta_{0})$. To extend the argument of all $\theta \in \R$, note that the functions $f_{p}$, and all of their derivatives, are $2\pi$-periodic. So, it suffices to show that \eqref{form54} holds for $\theta \in [0,2\pi]$. This follows by compactness from what we have already proven, by covering $[0,2\pi]$ by finitely many intervals of the form $I(\theta_{0})$, and finally defining "$r$" and "$c$" to be the minima of the constants $r(p_{0},\theta_{0})$ and $c(p_{0},\theta_{0})$ obtained in the process. \end{proof}

Proposition \ref{prop:cinematic} now follows from Proposition
\ref{prop3}, and the discussion above it (where we verified Definition \ref{d:cinematic}(1)-(2)). We then
conclude the proof of Theorem~\ref{mainLowDim}:

\begin{proof}[Proof of Theorem \ref{mainLowDim}] Given Remark \ref{rem3}, it suffices to prove \eqref{form46},
which will be a consequence of Theorem \ref{mainTransversal}.
Indeed, since the projections $\rho_{e}$ are isometries on the
$t$-axis, we may assume that
\begin{displaymath} \dim_{E} (K \, \setminus \, \{(0,0,t) : t \in \R\}) = \dim_{E} K. \end{displaymath}
Consequently, for  $\epsilon > 0$, we may fix a point $p_{0} \in K$ outside the $t$-axis such that
\begin{equation}\label{form59} \dim_{\mathrm{E}} (K \cap B(p_{0},r)) > \dim_{\mathrm{E}} K - \epsilon, \qquad r > 0. \end{equation}
Apply Proposition \ref{prop:cinematic} to find a radius $r > 0$
such that the family of functions $\mathcal{F} :=
\mathcal{F}(B(p_{0},r))$ is cinematic. It follows from a
combination of \eqref{form58}  and Proposition \ref{prop3} that $p
\mapsto f_{p}$ is a bilipschitz embedding $B \to C^{2}(\R)$.
 Therefore Theorem \ref{mainTransversal} is applicable: for every $0 \leq s < \min\{\dim_{\mathrm{E}} (K \cap B(p_{0},r)),1\}$ we have
\begin{displaymath} \dim_{\mathrm{E}} \{\theta \in [0,2\pi] : \dim_{\mathrm{E}} \rho_{\theta}(K \cap B(p_{0},r)) \leq s\} \leq s. \end{displaymath}
Now \eqref{form46} follows from \eqref{form59} by letting $\epsilon \to 0$. \end{proof}

\section{Duality between horizontal lines and $\mathbb{R}^{3}$}\label{s:duality}

This section contains preliminaries to prove Theorem \ref{main}. Most importantly, we introduce a notion of \emph{duality} that associates to points
and horizontal lines in $\mathbb{H}$ certain lines and points in
$\mathbb{R}^3$. The lines in $\R^{3}$ will be \emph{light rays} -- translates of lines on a fixed conical surface. To define these, we let $\mathcal{C}_{0}$ be the vertical cone
\begin{displaymath}
\mathcal{C}_0=\{(z_1,z_2,z_3)\in \mathbb{R}^3:\, z_1^2 + z_2^2 = z_3^2\} ,
\end{displaymath}
and we denote by $\mathcal{C}$ the ($45^{\circ}$) rotated cone
\begin{displaymath}
\mathcal{C}=R(\mathcal{C}_0)=\{(z_1,z_2,z_3)\in \mathbb{R}^3:\, z_2^2 = 2 z_1 z_3\},
\end{displaymath}
where
$R(z_1,z_2,z_3)=\left((z_1+z_3)/\sqrt{2},z_2,(-z_1+z_3)/\sqrt{2}\right)$.
The cone $\mathcal{C}$ is foliated by lines
\begin{equation}\label{eq:LineCone}
L_y = \mathrm{span}_{\mathbb{R}} (1,-y,y^2/2),\quad y\in
\mathbb{R},
\end{equation}
cf. the proof of \cite[Theorem 1.2]{MR4439466}, where a similar
parametrization is used. To be accurate, the lines $L_{y}$ only foliate $\mathcal{C} \, \setminus \, \{(0,0,z) : z \in \R\}$. We will abuse notation by
writing $L_{y}(s) = (s,-sy,sy^{2}/2)$ for the
parametrisation of the line $L_{y}$.

\begin{definition}[Light rays]
We say that a line $L$ in $\mathbb{R}^{3}$ is a \emph{light ray} if $L=z + L_y$
for some $z\in \mathbb{R}^3$ and $y\in \mathbb{R}$. In other words, $L$ is a (Euclidean) translate of a line contained in $\mathcal{C}$ (excluding the $t$-axis).
\end{definition}

\begin{remark}
Every light ray can be written as $(0,u,v) + L_y$ for a unique $(u,v) \in \R^{2}$.
\end{remark}

\begin{definition}[Horizontal lines]
A line $\ell$ in $\mathbb{R}^3$ is \emph{horizontal} if it is a
Heisenberg left translate of a horizontal subgroup, that is, there
exists $p\in \mathbb{H}$ and $e\in S^1$ such that $\ell = p \cdot
\mathbb{L}_e$.
\end{definition}

\begin{remark}
Every horizontal line, apart from left translates of the $x$-axis,
can be written as $\ell= \{(as+b,s,(b/2)s+c):\, s\in\mathbb{R}\}$
for a uniquely determined point $(a,b,c)\in \mathbb{R}^3$.
\end{remark}

\begin{definition}\label{d:duality} We define the following correspondence between
points and lines:
\begin{itemize}
\item To a point $p=(x,y,t)\in \mathbb{H}$, we associate the
\emph{light ray}
\begin{equation}\label{form27}
\ell^{\ast}(p) = (0,x,t-xy/2)+ L_y \subset (0,x,t-xy/2) +
\mathcal{C} \subset \mathbb{R}^3.
\end{equation}
(This formula will be motivated by Lemma \ref{l:IncidenceDuality} below.)
\item To a point $p^{\ast}=(a,b,c)\in \mathbb{R}^3$, we associate
the \emph{horizontal line}
\begin{displaymath}
\ell(p^{\ast}) = \{(as+b,s,\tfrac{b}{2}s+c):\, s\in \mathbb{R}\}.
\end{displaymath}
\end{itemize}
Given a set $\mathcal{P}$ of points in $\mathbb{H}$, we define the family of light rays
\begin{equation}\label{form1}
\ell^{\ast}(\mathcal{P}) = \bigcup_{p\in \mathcal{P}}
\ell^{\ast}(p).
\end{equation}
\end{definition}

\begin{remark} It is worth observing that the point $(0,x,t-xy/2)$ appearing in formula \eqref{form27} is nearly the vertical projection of $(x,y,t)$ to the $xt$-plane; the actual formula for this projection would be $\pi_{xt}(x,y,t) = (x,0,t - xy/2)$. It follows from this observation that
\begin{equation}\label{form28} \ell^{\ast}((u,0,v) \cdot (0,y,0)) = (0,u,v) + L_{y}, \qquad u,v,y \in \R, \end{equation}
because $\pi_{xt}((u,0,v) \cdot (0,y,0)) = (u,0,v)$.
\end{remark}

Under the point-line correspondence in Definition \ref{d:duality},
incidences between points and horizontal lines in $\mathbb{H}$ are
in one-to-one correspondence with incidences between light rays
and points in $\mathbb{R}^3$.

\begin{lemma}[Incidences are preserved under
duality]\label{l:IncidenceDuality} For $p \in \He$ and $p^{\ast} \in \R^{3}$, we have
\begin{displaymath}
p \in \ell(p^{\ast}) \quad \Longleftrightarrow \quad  p^{\ast} \in
\ell^{\ast}(p).
\end{displaymath}
\end{lemma}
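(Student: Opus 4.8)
The plan is to prove the equivalence by a direct computation: unwind both definitions and check that each of the two incidence relations reduces to the same pair of scalar equations relating the coordinates of $p$ and $p^{\ast}$. No clever idea is needed; everything is forced by the explicit parametrisations introduced in Definition \ref{d:duality}.

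Write $p = (x,y,t) \in \He$ and $p^{\ast} = (a,b,c) \in \R^{3}$. First I would analyse the condition $p \in \ell(p^{\ast})$. By definition $\ell(p^{\ast}) = \{(as+b,s,\tfrac{b}{2}s+c) : s \in \R\}$, so $p \in \ell(p^{\ast})$ means the system $x = as+b$, $y = s$, $t = \tfrac{b}{2}s + c$ is solvable in $s$. The middle equation forces $s = y$, and substituting it into the remaining two equations yields exactly
\begin{equation*} b = x - ay \qquad \text{and} \qquad c = t - \tfrac{xy}{2} + \tfrac{ay^{2}}{2}. \end{equation*}

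Next I would analyse the condition $p^{\ast} \in \ell^{\ast}(p)$. Using the parametrisation $L_{y}(s) = (s,-sy,sy^{2}/2)$, formula \eqref{form27} gives $\ell^{\ast}(p) = \{(s,\, x - sy,\, t - \tfrac{xy}{2} + \tfrac{sy^{2}}{2}) : s \in \R\}$, so $p^{\ast} \in \ell^{\ast}(p)$ means that $a = s$, $b = x - sy$, $c = t - \tfrac{xy}{2} + \tfrac{sy^{2}}{2}$ is solvable in $s$. This time the first equation forces $s = a$, and substituting gives precisely the same pair of equations $b = x - ay$ and $c = t - \tfrac{xy}{2} + \tfrac{ay^{2}}{2}$. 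Since both incidence relations are thus equivalent to this single system of two equations in $(x,y,t,a,b,c)$, they are equivalent to each other, which is the assertion of the lemma. (As a consistency check, one recovers \eqref{form28}: taking $p = (u,0,v)\cdot(0,y,0)$ makes the constant terms line up with $(0,u,v) + L_{y}$.)

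There is essentially no obstacle here: the only point requiring (minimal) care is that eliminating the parameter $s$ is legitimate, and this is immediate because in each of the two systems one equation expresses $s$ directly --- as $y$ (the second coordinate of $p$) on the Heisenberg side, and as $a$ (the first coordinate of $p^{\ast}$) on the $\R^{3}$ side. This same computation also explains the a priori mysterious shift $(0,x,t-xy/2)$ in the definition of $\ell^{\ast}(p)$: it is exactly the vector needed to match the constant terms of the two parametrisations, which is the motivation alluded to in the statement of the lemma.
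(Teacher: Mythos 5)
Your proof is correct and takes essentially the same approach as the paper: both reduce the two incidence conditions to the common pair of scalar equations $b = x - ay$ and $c = t - \tfrac{xy}{2} + \tfrac{ay^2}{2}$ (the paper phrases the latter step compactly as $p^{\ast} = (0,x,t-xy/2) + L_y(a)$, which is the same thing). The only difference is notational; the computation is identical.
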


\begin{proof} Let $p=(x,y,t) \in \He$ and $p^{\ast}=(a,b,c) \in \R^{3}$. The condition $p \in \ell(p^{\ast})$ is equivalent to
\begin{displaymath}
\left\{ \begin{array}{l}ay+b=x\\\tfrac{b}{2} y + c=t.\end{array}
\right.
\end{displaymath}
Recalling the notation $L_{y}(s) = (s,-sy,sy^{2}/2)$, this is further equivalent to
\begin{equation}\label{form42} p^{\ast} = (a,b,c) = (0,x,t - xy/2) + L_{y}(a). \end{equation}
Finally, \eqref{form42} is equivalent to $p^{\ast} \in \ell^{\ast}(p)$. \end{proof}

\subsection{Measures on the space of horizontal lines}\label{s:measures} The duality $p \mapsto \ell(p)$ between points in $p \in \R^{3}$ and horizontal lines $\ell(p)$ in Definition \ref{d:duality} allows one to push-forward Lebesgue measure "$\mathrm{Leb}$" on $\R^{3}$ to construct a measure "$\mathfrak{m}$" on the set of horizontal lines:
\begin{displaymath} \mathfrak{m}(\mathcal{L}) := (\ell_{\sharp}\mathrm{Leb})(\mathcal{L}) = \mathrm{Leb}(\{p \in \R^{3} : \ell(p) \in \mathcal{L}\}). \end{displaymath}
There is, however, a more commonly used measure on the space of
horizontal lines. This measure "$\mathfrak{h}$" is discussed
extensively for example in \cite[Section 2.3]{MR4127898}. The
measure $\mathfrak{h}$ is the unique (up to a multiplicative
constant) non-zero left invariant measure on the set of horizontal
lines. One possible formula for it is the following:
\begin{equation}\label{form20} \mathfrak{h}(\mathcal{L}) = \int_{S^{1}} \mathcal{H}^{3}(\{w \in \W_{e} : \pi_{e}^{-1}\{w\} \in \mathcal{L}\}) \, d\mathcal{H}^{1}(e). \end{equation}
Let $f \in L^{1}(\He)$, and consider the weighted measure $\mu_{f}
:= f \, d\mathrm{Leb}$. Then, starting from the definition
\eqref{form20}, it is easy to check that
\begin{equation}\label{form21} \int_{S^{1}} \|\pi_{e}\mu_{f}\|_{L^{2}}^{2} \, d\mathcal{H}^{1}(e) = \int Xf(\ell)^{2} \, d\mathfrak{h}(\ell), \end{equation}
where $Xf(\ell) := \int_{\ell} f \, d\mathcal{H}^{1}$.

While the measure $\mathfrak{h}$ is mutually absolutely continuous
with respect to $\mathfrak{m}$, the Radon-Nikodym derivative is
not bounded (from above and below): with our current notational
conventions, the lines $\ell(p)$ are never parallel to the
$x$-axis, and the $\mathfrak{m}$-density of lines making a small
angle with the $x$-axis is smaller than their
$\mathfrak{h}$-density. The problem can be removed by restricting
our considerations to lines which make a substantial angle with
the $x$-axis. For example, let $\mathcal{L}_{\angle}$ be the set
of horizontal lines which have slope at most $1$ relative to the
$y$-axis; thus
\begin{displaymath} \mathcal{L}_{\angle} = \ell(\{(a,b,c) \in \R^{3} : |a| \leq 1\}). \end{displaymath}
Then, $\mathfrak{m}(\mathcal{L}) \sim \mathfrak{h}(\mathcal{L})$
for all Borel sets $\mathcal{L} \subset \mathcal{L}_{\angle}$. The
lines in $\mathcal{L}_{\angle}$ coincide with pre-images of the
form $\pi_{e}^{-1}\{w\}$, $e \in S \subset S^{1}$, where $S$
consists of those vectors making an angle at most $45^{\circ}$
with the $y$-axis. Now, \eqref{form21} also holds in the following
restricted form:
\begin{equation}\label{form22} \int_{S} \|\pi_{e}\mu_{f}\|_{L^{2}}^{2} d\mathcal{H}^{1}(e) = \int_{\mathcal{L}_{\angle}} Xf(\ell)^{2} \, d\mathfrak{h}(\ell) \sim \int_{\mathcal{L}_{\angle}} Xf(\ell)^{2} \, d\mathfrak{m}(\ell). \end{equation}
This equation will be useful in establishing Theorem
\ref{mainTechnical}. This will, formally, only prove Theorem
\ref{mainTechnical} with "$S$" in place of "$S^{1}$", but the
original version is easy to deduce from this apparently weaker
version.

\subsection{Ball-plate duality}\label{s:ballPlate} Recall from \eqref{form1} the definition of the (dual) line set $\ell^{\ast}(P)$ for $P \subset \He$. What does $\ell^{\ast}(B_{\He}(p,r))$ look like? The answer is: a \emph{plate} tangent to the cone $\mathcal{C}$. Informally speaking, for $r \in (0,\tfrac{1}{2}]$, an $r$-plate tangent to $\mathcal{C}$ is a rectangle of dimensions $\sim (1 \times r \times r^{2})$ whose long side is parallel to a light ray, and whose orientation is such that the plate is roughly tangent to $\mathcal{C}$, see Figure \ref{fig1}. To prove rigorously that $\ell^{\ast}(B_{\He}(p,r))$ looks like such a plate (inside $B(1)$), we need to be more precise with the definitions.

Recall that the cone $\mathcal{C}$ is a rotation of the "standard"
cone $\mathcal{C}_{0} = \{(x,y,z) : z^{2} = x^{2} + y^{2}\}$.
\begin{figure}[h!]
\begin{center}
\begin{overpic}[scale = 1]{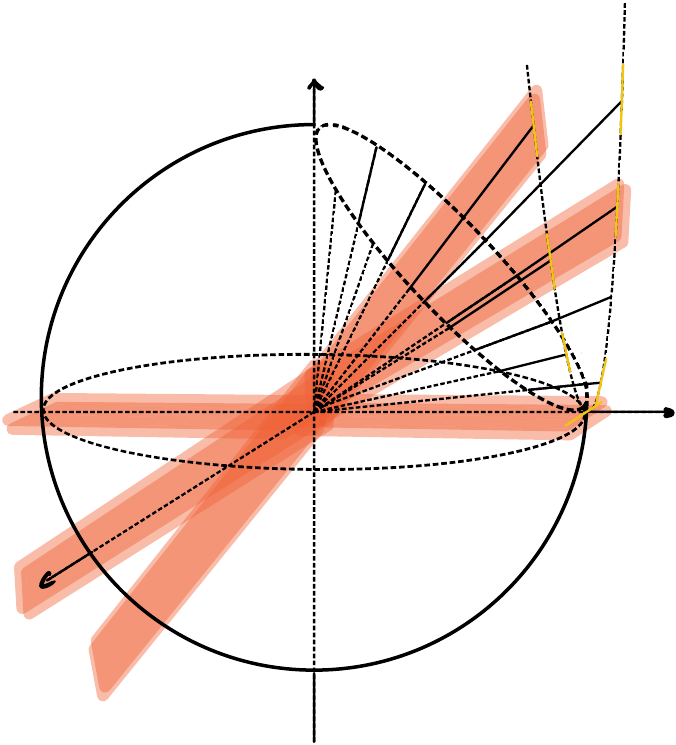}
\put(90,40){$x$} \put(-1,20){$y$} \put(45,88){$z$}
\put(86,70){$\mathbb{P} = \{(1,-y,\tfrac{y^{2}}{2}) : y \in \R\}$}
\put(51,72){$\mathcal{C}$}
\end{overpic}
\caption{The cone $\mathcal{C}$, the parabola $\mathbb{P}$, and
three $r$-plates.}\label{fig1}
\end{center}
\end{figure}

The intersection of $\mathcal{C}$ with the plane $\{x = 1\}$ is
the parabola
\begin{displaymath} \mathbb{P} = \{(1,-y,y^{2}/2) : y \in \R\}. \end{displaymath}
For every $r \in (0,\tfrac{1}{2}]$ and $p \in \mathbb{P}$, choose
a rectangle $\mathcal{R} = \mathcal{R}_{r}(p)$ of dimensions $r
\times r^{2}$ in the plane $\{x = 1\}$, centred at $p$, such that
the longer $r$-side is parallel to the tangent line of
$\mathbb{P}$ at $p$. Then $\mathbb{P} \cap B(0,cr) \subset
\mathcal{R}$ for an absolute constant $c > 0$. Now, the
\emph{$r$-plate centred at $p$} is the set obtained by sliding the
rectangle $\mathcal{R}$ along the light ray containing $p$ inside
$\{|x| \leq 1\}$, see Figure \ref{fig1}. We make this even more
formal in the next definition.

\begin{definition}[$r$-plate]\label{def:plate} Let $r \in (0,\tfrac{1}{2}]$, and let $p = (1,-y,y^{2}/2) \in \mathbb{P} \subset \mathcal{C}$ with $y \in [-1,1]$. Let $\mathcal{R}_{r}(0) := [-r,r] \times [-r^{2},r^{2}]$, and define $\mathcal{R}_{r}(y) := M_{y}(\mathcal{R}_{r}(0)) \subset \R^{2}$, where
\begin{displaymath} M_{y} = \begin{pmatrix} 1 & 0 \\ -y & 1 \end{pmatrix} \end{displaymath}
(The rectangle $\mathcal{R}_{r}(y)$ is the intersection of an
$r$-plate with the plane $\{x = 0\}$.) Define
\begin{displaymath} \mathcal{P}_{r}(p) := \{(0,\vec{r}) + L_{y}([-1,1]) : \vec{r} \in \mathcal{R}_{r}(y)\}, \end{displaymath}
The set $\mathcal{P}_{r}(p)$ is called the \emph{$r$-plate centred
at $p \in \mathbb{P}$.} In general, an \emph{$r$-plate} is any
translate of one of the sets $\mathcal{P}_{r}(p)$, for $p =
(1,-y,y^{2}/2)$ with $y \in [-1,1]$, and $r \in (0,\tfrac{1}{2}]$.

For the $r$-plate $\mathcal{P}_{r}(p)$, we also commonly use the notation $\mathcal{P}_{r}(y)$, where $p = (1,-y,y^{2}/2)$.

\end{definition}

\begin{remark} Since we require $y \in [-1,1]$ in Definition \ref{def:plate}, it is clear that an $r$-plate contains, and is contained in, a rectangle of dimensions $\sim (1 \times r \times r^{2})$. It is instructive to note that the number of "essentially distinct" $r$-plates intersecting $B(0,1)$ is roughly $r^{-4}$: to see this, take a maximal $r$-separated subset of $\mathbb{P}_{r} \subset \mathbb{P}$, and note that for each $p \in \mathbb{P}_{r}$, the plate $\mathcal{P}_{r}(p)$ has volume $r^{3}$. Therefore it takes $\sim r^{-3}$ translates of $\mathcal{P}_{r}(p)$ to cover $B(0,1)$. This $r^{-4}$-numerology already suggests that the various $r$-plates might correspond to Heisenberg $r$-balls via duality. \end{remark}

To relate the plates $\mathcal{P}_{r}$ to Heisenberg balls, we
define a slight modification of the plates $\mathcal{P}_{r}$.
Whereas $\mathcal{P}_{r}$ is a union of (truncated) light rays in
one fixed direction, the following "modified" plates contain full
light rays in an $r$-arc of directions. These "modified" plates
will finally match the duals of Heisenberg balls, see
Proposition \ref{prop1}.

\begin{definition}[Modified $r$-plate]\label{def:modPlate} Let $r \in (0,\tfrac{1}{2}]$ and $y \in [-1,1]$. Let $\mathcal{R}_{r}(y_{0}) \subset \R^{2}$ be the rectangle from Definition \ref{def:plate}. For $(u,v) \in \R^{2}$, define the \emph{modified $r$-plate}
\begin{equation}\label{form3} \Pi_{r}(u,v,y) := (0,u,v) + \{(0,\vec{r}) + L_{y'} : \vec{r} \in \mathcal{R}_{r}(y) \text{ and } |y' - y| \leq r\}. \end{equation}
\end{definition}

\begin{remark}\label{rem1} The relation between the sets $\mathcal{P}_{r}$ and $\Pi_{r}$ is that the following holds for some absolute constant $c > 0$: if $r \in (0,\tfrac{1}{2}]$, $y \in [-1,1]$, and $u,v \in \R$, then
\begin{equation}\label{form9} \Pi_{cr}(u,v,y) \cap \{(s,y,z) : |s| \leq 2\} \subset (0,u,v) + \mathcal{P}_{r}(y) \subset \Pi_{r}(u,v,y). \end{equation}
(The constant "$2$" is arbitrary, but happens to be the one we need.) To see this, it suffices to check the case $u = 0 = v$. Consider
the "slices" of $\Pi_{r}(0,0,y)$ and $\mathcal{P}_{r}(y)$ with a
fixed plane $\{x = s\}$ for $|s| \leq 1$. If $s = 0$, both slices
coincide with the rectangle $\mathcal{R}_{y}(y)$. If $0 < |s| \leq
1$, the slice $\Pi_{r}(0,0,y) \cap \{x = s\}$ can be written as a
sum
\begin{displaymath} \Pi_{r}(0,0,y) \cap \{x = s\} = \mathcal{R}_{r}(y) + \{L_{y'}(s) : |y - y'| \leq r\}, \end{displaymath}
whereas $\mathcal{P}_{r}(y) \cap \{x = s\} = \mathcal{R}_{r}(y) +
\{L_{y}(s)\}$. The relationship between these two slices is
depicted in Figure \ref{fig2}.
\begin{figure}[h!]
\begin{center}
\begin{overpic}[scale = 0.9]{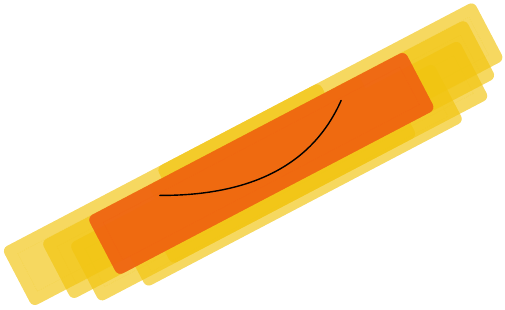}
\end{overpic}
\caption{The red box is the slice $\mathcal{P}_{r}(y) \cap \{x =
s\}$. The slice $\Pi_{r}(0,0,y) \cap \{x = s\}$ is a union of the
yellow boxes centred along the black curve $\{L_{y'}(s) : |y' - y|
\leq r\}$. All the boxes individually are translates of
$\mathcal{R}_{r}(y)$.}\label{fig2}
\end{center}
\end{figure}
After this, we leave it to the reader to verify that
$\Pi_{cr}(0,0,y) \cap \{x = s\} \subset \mathcal{P}_{r}(y) \cap
\{x = s\}$ if $c > 0$ is sufficiently small, and for $|s| \leq 2$.

We record the following consequence of \eqref{form9}:
$\Pi_{r}(u,v,y) \cap \{(s,y,z) : |s| \leq 1\}$ is
contained in a tube of width $r$ around the line $(0,u,v) +
L_{y}$. This is because $\mathcal{P}_{r}(y)$ is obviously
contained in a tube of width $\sim r$ around $L_{y}$ (this is a
very non-sharp statement, using only that the longer side of
$\mathcal{R}_{y}(r)$ has length $r$.)
\end{remark}

We then show that the $\ell^{\ast}$-duals of Heisenberg balls are essentially modified plates:

\begin{proposition}\label{prop1} Let $p  = (u_{0},0,v_{0}) \cdot (0,y_{0},0)$, $r \in (0,\tfrac{1}{2}]$, and $B := B_{\He}(p,r)$. Then,
\begin{equation}\label{form41} \ell^{\ast}(B) \subset \Pi_{2r}(u_{0},v_{0},y_{0}) \subset \ell^{\ast}(CB), \end{equation}
where $C > 0$ is an absolute constant, and $CB = B_{\He}(p,Cr)$.
\end{proposition}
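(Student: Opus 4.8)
The plan is to use the left-invariance of $d_{\He}$ to reduce to a ball centred at the origin, to compute the dual light ray $\ell^{\ast}(q)$ explicitly for $q$ ranging over that ball, and then to match these light rays against the generating light rays of the modified plate by comparing the (unique) points where each punctures the plane $\{x=0\}$. First I would write $B_{\He}(p,r) = p \cdot B_{\He}(0,r)$, so that it suffices to analyse $\ell^{\ast}(q)$ for $q = p \cdot (a,b,c)$ with $\|(a,b,c)\| \le r$ (resp.\ $\le Cr$ for the second inclusion). A direct computation with the group law gives $p = (u_{0},\, y_{0},\, v_{0} + \tfrac{1}{2}u_{0}y_{0})$ and $q = (u_{0}+a,\, y_{0}+b,\, v_{0} + \tfrac{1}{2}u_{0}y_{0} + c + \tfrac{1}{2}(u_{0}b - y_{0}a))$. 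Substituting into \eqref{form27} and simplifying (the $u_{0}b$- and $u_{0}y_{0}$-terms drop out), one gets
\begin{displaymath} \ell^{\ast}(q) = \big(0,\, u_{0}+a,\, v_{0} + c - ay_{0} - \tfrac{1}{2}ab\big) + L_{y_{0}+b}. \end{displaymath}
In particular $\ell^{\ast}(q)$ is a light ray of direction $L_{y_{0}+b}$, and $\|(a,b,c)\| \le r$ forces $|a|,|b| \le r$ and $|c| \le r^{2}/4$.

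The key reduction is the observation that every light ray $z + L_{y'}$ has direction vector $(1,-y',(y')^{2}/2)$ with nonzero first coordinate, so it meets the plane $\{x = 0\}$ in exactly one point; consequently two light rays with the \emph{same} direction coincide if and only if they meet $\{x=0\}$ at the same point. By the display above, $\ell^{\ast}(q)$ meets $\{x=0\}$ at $(0,\, u_{0}+a,\, v_{0}+c-ay_{0}-\tfrac{1}{2}ab)$. On the other side, unwinding Definition \ref{def:modPlate} and Definition \ref{def:plate}, a generating light ray $(0,u_{0},v_{0}) + (0,\vec{r}) + L_{y'}$ of $\Pi_{\rho}(u_{0},v_{0},y_{0})$ has direction $L_{y'}$ with $|y'-y_{0}| \le \rho$, and with $\vec{r} = M_{y_{0}}(\xi,\eta)$, $|\xi| \le \rho$, $|\eta| \le \rho^{2}$, it meets $\{x=0\}$ at $(0,\, u_{0}+\xi,\, v_{0}+\eta-y_{0}\xi)$. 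Thus the whole problem is reduced to solving the "matching equations'' $y_{0}+b = y'$, $a = \xi$, $c - ay_{0} - \tfrac{1}{2}ab = \eta - y_{0}\xi$, i.e.\ $b = y'-y_{0}$, $a = \xi$, $c = \eta + \tfrac{1}{2}ab$, together with the respective size constraints.

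For $\ell^{\ast}(B) \subset \Pi_{2r}(u_{0},v_{0},y_{0})$ I would fix $\|(a,b,c)\| \le r$ and read off from the matching equations that the corresponding admissible parameters are $y' = y_{0}+b$, $\xi = a$, $\eta = c - \tfrac{1}{2}ab$; here $|b|\le r \le 2r$, $|\xi| = |a| \le r \le 2r$, and $|\eta| \le |c| + \tfrac{1}{2}|ab| \le \tfrac{r^{2}}{4} + \tfrac{r^{2}}{2} \le (2r)^{2}$, so $\ell^{\ast}(q) \subset \Pi_{2r}(u_{0},v_{0},y_{0})$; taking the union over $q \in B$ gives the first inclusion. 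For $\Pi_{2r}(u_{0},v_{0},y_{0}) \subset \ell^{\ast}(CB)$ I would start from a generating ray, i.e.\ from data $|y'-y_{0}|\le 2r$, $|\xi|\le 2r$, $|\eta|\le 4r^{2}$, and set $b = y'-y_{0}$, $a=\xi$, $c = \eta + \tfrac{1}{2}ab$. Then $\ell^{\ast}(p\cdot(a,b,c))$ has the same direction and the same $\{x=0\}$-trace as the given ray, hence equals it; and $|a|,|b|\le 2r$, $|c| \le 4r^{2} + \tfrac{1}{2}(2r)(2r) = 6r^{2}$, so $\|(a,b,c)\|^{4} = (a^{2}+b^{2})^{2} + 16c^{2} \le (8r^{2})^{2} + 16(6r^{2})^{2} = 640\,r^{4}$, i.e.\ $d_{\He}(p\cdot(a,b,c),p) = \|(a,b,c)\| \le 6r$. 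Taking $C = 6$ and the union over all generating rays yields the second inclusion of \eqref{form41}.

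I do not expect a serious obstacle: the argument is bookkeeping with the Heisenberg group law and the Kor\'anyi norm, organised around the $\{x=0\}$-slice reduction. The one point that needs care — and the reason the constants close up — is that the shear $M_{y_{0}}$ built into the rectangle $\mathcal{R}_{r}(y_{0})$ in Definition \ref{def:plate} is exactly calibrated to absorb the cross-term $-\tfrac{1}{2}ab$ produced by left-translation in $\He$; in fact the computation above shows the first inclusion already holds with $\Pi_{r}$ in place of $\Pi_{2r}$, the larger radius being used only for comfort and for the reverse inclusion.
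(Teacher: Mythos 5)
Your proof is correct: the group-law computation giving
\begin{displaymath} \ell^{\ast}(p\cdot(a,b,c)) = (0,\,u_{0}+a,\,v_{0}+c-ay_{0}-\tfrac{1}{2}ab) + L_{y_{0}+b} \end{displaymath}
checks out, as do the Kor\'anyi-norm bounds $|a|,|b|\le r$, $|c|\le r^{2}/4$, and the final estimate $\|(a,b,c)\|\le 640^{1/4}r < 6r$ for the reverse inclusion. The route is conceptually the same as the paper's: both arguments match light rays by their direction and by their unique intersection point with $\{x=0\}$, and then close the estimate with Kor\'anyi bookkeeping. The difference is in the parametrization and in how the $\{x=0\}$-slice is reached. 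The paper writes $q=(u,0,v)\cdot(0,y,0)$, applies the duality identity \eqref{form28} to read off $\ell^{\ast}(q)=(0,u,v)+L_{y}$, controls $(u,v)$ via a separate estimate on $\pi_{xt}(B_{\He}((0,y_{0},0),r))$, and for the reverse inclusion computes $d_{\He}(p,q)$ from the explicit expansion \eqref{form28a}. You instead write $q=p\cdot(a,b,c)$ and compute $\ell^{\ast}(q)$ from \eqref{form27} directly; this makes $d_{\He}(p,q)=\|(a,b,c)\|$ by left-invariance, so the final norm computation collapses into a one-liner. Your observation that the first inclusion already holds with $\Pi_{r}$ in place of $\Pi_{2r}$ is also correct and is not noted in the paper.
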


\begin{remark}\label{rem2} To build a geometric intuition, it will be helpful to notice the following. The $y$-coordinate of the point $p = (u_{0},0,v_{0}) \cdot (0,y_{0},0) = (u_{0},y_{0},v_{0} + \tfrac{1}{2}u_{0}y_{0})$ is "$y_{0}$".  On the other hand, while the modified plate $\Pi_{2r}(u_{0},v_{0},y_{0})$ contains many lines, they are all "close" to the "central" line $(0,u_{0},v_{0}) + L_{y_{0}}$ (see Definition \ref{form3}). According to the inclusions in \eqref{form41}, this means that the "direction" $L_{y_{0}}$ of the modified plate containing the dual $\ell^{\ast}(B(p,r))$ is determined by the $y$-coordinate of $p$. Even less formally: \emph{Heisenberg balls whose centres have the same $y$-coordinate are dual to parallel plates}.

\end{remark}

\begin{proof}[Proof of Proposition \ref{prop1}] To prove the inclusion $\ell^{\ast}(B) \subset \Pi_{2r}(u_{0},v_{0},y_{0})$, let $q \in B_{\He}(p,r)$, and write $q := (u,0,v) \cdot (0,y,0)$ with $(u,v) \in \R^{2}$ and $y \in \R$. First, we note that
\begin{equation}\label{form4} |y - y_{0}| \leq d_{\He}(p,q) \leq r.  \end{equation}
Let $\pi_{xt}$ be the vertical projection to the $xt$-plane $\{(u',0,v') : u',v' \in \R\}$. Then $(u,0,v) = \pi_{xt}(q) \in \pi_{xt}(B)$ by the definition of $\pi_{xt}$. We now observe that $B =
(u_{0},0,v_{0}) \cdot B_{\He}((0,y_{0},0),r)$, so
\begin{displaymath} \pi_{xt}(B ) = (u_{0},0,v_{0}) + \pi_{xt}(B_{\He}((0,y_{0},0),r)). \end{displaymath}
We claim that
\begin{equation}\label{form2} \pi_{xt}(B_{\He}((0,y_{0},0),r)) \subset \{(u',0,v') : (u',v') \in \mathcal{R}_{2r}(y_{0})\}. \end{equation}
This will prove that
\begin{equation}\label{form5} (u,0,v) \in (u_{0},0,v_{0}) + \{(u',0,v') : (u',v') \in \mathcal{R}_{2r}(y_{0})\}. \end{equation}
Recalling the definition \eqref{form3}, a combination of
\eqref{form4} and \eqref{form5} now shows that
\begin{displaymath} \ell^{\ast}(q) = \ell^{\ast}((u,0,v) \cdot (0,y,0)) \stackrel{\eqref{form28}}{=} (0,u,v) + L_{y} \subset \Pi_{2r}(u_{0},v_{0},y_{0}). \end{displaymath}
This will complete the proof of the inclusion $\ell^{\ast}(B)
\subset \Pi_{2r}(u_{0},v_{0},y_{0})$.

Let us then prove \eqref{form2}. Pick $(x,y,t) \in
B_{\He}((0,y_{0},0),r)$. Then,
\begin{displaymath} \|(x,y - y_{0},t + \tfrac{1}{2}xy_{0})\| = d_{\He}((x,y,t),(0,y_{0},0)) \leq r, \end{displaymath}
so
\begin{equation}\label{form6} |x| \leq r, \quad |y - y_{0}| \leq r, \quad \text{and} \quad |t + \tfrac{1}{2}xy_{0}| \leq r^{2}. \end{equation}
Now, to prove \eqref{form2}, recall that $\pi_{xt}(x,y,t) = (x,0,t
- \tfrac{1}{2}xy)$. Thus, we need to show that $(x,t -
\tfrac{1}{2}xy) \in \mathcal{R}_{2r}(y_{0}) =
M_{y_{0}}(\mathcal{R}_{2r}(0))$. Equivalently, $M_{y_{0}}^{-1}(x,t
- \tfrac{1}{2}xy) \in \mathcal{R}_{2r}(0)$. Recalling the
definition of $M_{y}$, one checks that
\begin{align*} M_{y_{0}}^{-1}(x,t - \tfrac{1}{2}xy) & = \begin{pmatrix} 1 & 0 \\ y_{0} & 1 \end{pmatrix}(x,t - \tfrac{1}{2}xy)\\
& = (x,xy_{0} + t - \tfrac{1}{2}xy)\\
& = (x,t + \tfrac{1}{2}xy_{0} + \tfrac{1}{2}x(y_{0} - y)).
\end{align*} Using \eqref{form6}, we finally note that the point
on the right lies in the parabolic rectangle
$\mathcal{R}_{2r}(0)$. This concludes the proof of \eqref{form2}.

Let us then prove the inclusion $\Pi_{r}(u_{0},v_{0},y_{0})
\subset \ell^{\ast}(CB)$. The set $\Pi_{r}(u_{0},v_{0},y_{0})$ is
a union of the lines $(0,u_{0},v_{0}) + (0,\vec{r}) + L_{y}$,
where $\vec{r} \in \mathcal{R}_{r}(y_{0})$ and $|y - y_{0}| \leq
r$. We need to show that every such line can be realised as
$\ell^{\ast}(q)$ for some $q \in B_{\He}(p,Cr)$. In this task, we
are aided by the formula
\begin{displaymath} \ell^{\ast}((u,0,v) \cdot (0,y,0)) = (0,u,v) + L_{y} \end{displaymath}
observed in \eqref{form27}. This formula shows that we \emph{need}
to define $q := (u,0,v) \cdot (0,y,0)$, where $(u,v) :=
(u_{0},v_{0}) + \vec{r}$, and $y$ is as in "$L_{y}$". Then we just
have to hope that $q \in B_{\He}(p,Cr)$.

Recalling that $p = (u_{0},0,v_{0}) \cdot (0,y_{0},0)$, one can
check by direct computation that
\begin{equation}\label{form28a} d_{\He}(p,q) = \|(u_{0} - u,y_{0} - y,v_{0} - v + y_{0}(u_{0} - u) + \tfrac{1}{2}(u - u_{0})(y_{0} - y)\|. \end{equation}
On the other hand, one may easily check that $(u,v) \in
(u_{0},v_{0}) + \mathcal{R}_{r}(y_{0})$ is equivalent to
\begin{displaymath} (u - u_{0},v - v_{0} + y_{0}(u - u_{0})) \in \mathcal{R}_{r}(0), \end{displaymath}
which implies $|u - u_{0}| \leq r$ and $|v - v_{0} + y_{0}(u -
u_{0})| \leq r^{2}$. Since moreover $|y - y_{0}| \leq r$ by
assumption, it follows from \eqref{form28a} and the definition of
the norm $\|\cdot\|$ that $d_{\He}(p,q) \lesssim r$. This
completes the proof. \end{proof}

We close the section with two additional auxiliary results:

\begin{proposition}\label{prop2} Let $p,q \in \He$ and $r \in (0,\tfrac{1}{2}]$, and assume that $\|p\| \leq 1/10$. Assume moreover that $\ell^{\ast}(p) \cap B(1) \subset \ell^{\ast}(B_{\He}(q,r))$. Then $p \in B_{\He}(q,Cr)$ for some absolute constant $C > 0$.
\end{proposition}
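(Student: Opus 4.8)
The plan is to translate the hypothesis into a statement about a single light ray contained in a modified plate, extract three coordinate inequalities from that containment, and then read off $d_{\He}(p,q) \lesssim r$ from the distance formula \eqref{form28a}. Write $p = (u,0,v) \cdot (0,y,0)$ and $q = (u_{0},0,v_{0}) \cdot (0,y_{0},0)$; this is possible for every point of $\He$. By \eqref{form28}, $\ell^{\ast}(p) = (0,u,v) + L_{y}$, and by Proposition \ref{prop1}, $\ell^{\ast}(B_{\He}(q,r)) \subset \Pi_{2r}(u_{0},v_{0},y_{0})$. (Although Definition \ref{def:modPlate} formally restricts the last parameter to $[-1,1]$, both this inclusion and the slice description used below are purely computational and remain valid for every $y_{0} \in \R$.) Thus the hypothesis reads
\[ \{(0,u,v) + L_{y}(s) : s \in \R\} \cap B(1) \subset \Pi_{2r}(u_{0},v_{0},y_{0}). \]

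First I would check that two well-separated points of $\ell^{\ast}(p)$ lie inside $B(1)$. Parametrise $\ell^{\ast}(p)$ by $\gamma(s) := (0,u,v) + L_{y}(s) = (s, u - sy, v + \tfrac{1}{2}sy^{2})$. Since $\|p\| \leq \tfrac{1}{10}$ and $p = (u,y,v + \tfrac{1}{2}uy)$, we have $|u|, |y| \leq \tfrac{1}{10}$ and $|v| \leq \tfrac{1}{100}$, and a direct estimate then shows $\gamma(0), \gamma(\tfrac{1}{2}) \in B(1)$. Consequently $\gamma(0), \gamma(\tfrac{1}{2}) \in \Pi_{2r}(u_{0},v_{0},y_{0})$.

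The main computation is to unwind membership in $\Pi_{2r}(u_{0},v_{0},y_{0})$. Directly from \eqref{form3}, a point $w = (w_{1},w_{2},w_{3}) \in \R^{3}$ lies in $\Pi_{2r}(u_{0},v_{0},y_{0})$ if and only if there exists $y'$ with $|y' - y_{0}| \leq 2r$ such that $\big(w_{2} - u_{0} + w_{1}y',\ w_{3} - v_{0} - \tfrac{1}{2}w_{1}(y')^{2}\big) \in \mathcal{R}_{2r}(y_{0}) = M_{y_{0}}\big([-2r,2r] \times [-4r^{2}, 4r^{2}]\big)$. Applying this with $w = \gamma(0)$ (where $w_{1} = 0$, so $y'$ plays no role) and then applying $M_{y_{0}}^{-1}$ gives
\[ |u - u_{0}| \leq 2r \qquad \text{and} \qquad |v - v_{0} + y_{0}(u - u_{0})| \leq 4r^{2}. \]
Applying it with $w = \gamma(\tfrac{1}{2})$ produces some $y'$ with $|y' - y_{0}| \leq 2r$ for which the first-coordinate estimate $|(u - u_{0}) + \tfrac{1}{2}(y' - y)| \leq 2r$ holds (the shear $M_{y_{0}}^{-1}$ is lower-triangular with unit diagonal, hence fixes the first coordinate); combined with $|u - u_{0}| \leq 2r$ this forces $|y - y'| \leq 8r$, and therefore
\[ |y - y_{0}| \leq 10r. \]

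Finally, by \eqref{form28a}, $d_{\He}(p,q)$ equals the Kor\'anyi norm of the triple $\big(u_{0} - u,\ y_{0} - y,\ v_{0} - v + y_{0}(u_{0} - u) + \tfrac{1}{2}(u - u_{0})(y_{0} - y)\big)$. By the three displayed inequalities its first two entries are $O(r)$, while the third is bounded by $|v - v_{0} + y_{0}(u - u_{0})| + \tfrac{1}{2}|u - u_{0}||y_{0} - y| \leq 4r^{2} + 10r^{2}$. Since $\|(a,b,c)\| = \big((a^{2} + b^{2})^{2} + 16c^{2}\big)^{1/4}$, this yields $d_{\He}(p,q) \leq Cr$ for an absolute constant $C$, i.e. $p \in B_{\He}(q,Cr)$. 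The only genuinely delicate point is the bookkeeping in the slice identity for $\Pi_{2r}$ — keeping track of the signs in $L_{y'}(s)$ and of the shear $M_{y_{0}}$. Conceptually, though, the mechanism is transparent: a unit-length segment contained in a $(1 \times r \times r^{2})$-plate must be $O(r)$-close to parallel to the plate's long direction, and it is precisely the evaluation at $s = \tfrac{1}{2}$ that detects this and pins the slope $y$ to $y_{0}$.
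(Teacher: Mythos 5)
Your proof is correct and follows the same overall structure as the paper's: express $p$ and $q$ via the $(u,0,v)\cdot(0,y,0)$ parametrisation, reduce (via Proposition \ref{prop1}) to the containment of the light ray $\ell^{\ast}(p)\cap B(1)$ in the modified plate $\Pi_{2r}(u_{0},v_{0},y_{0})$, extract the two constraints $(u,v)\in(u_{0},v_{0})+\mathcal{R}_{\sim r}(y_{0})$ and $|y-y_{0}|\lesssim r$, and then read off $d_{\He}(p,q)\lesssim r$ from \eqref{form28a} together with the Kor\'anyi norm. The one place where you diverge is the slope bound $|y-y_{0}|\lesssim r$: the paper deduces it indirectly from the observation recorded in Remark \ref{rem1} that $\Pi_{2r}(u_{0},v_{0},y_{0})\cap B(1)$ lies in a tube of width $\sim r$ around the central light ray, whence the angle between $L_{y}$ and $L_{y_{0}}$ is $\lesssim r$. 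You instead evaluate $\ell^{\ast}(p)$ at a second parameter value $s=\tfrac{1}{2}$, apply the slice characterisation of $\Pi_{2r}$ to that point, and use the unit-diagonal triangularity of $M_{y_{0}}^{-1}$ to isolate the first coordinate, which pins down $y'$ and hence $|y-y_{0}|\leq 10r$ with explicit constants. This is a self-contained algebraic replacement for the tube-and-angle argument, and it has the virtue of not leaning on the geometric bookkeeping that Remark \ref{rem1} partly delegates to the reader; the price is a somewhat heavier direct computation. Your parenthetical about the formal restriction $y_{0}\in[-1,1]$ in Definition \ref{def:modPlate} is also apt: the paper glosses over this, and your observation that the slice identities are purely computational and hold for all $y_{0}$ is the right way to dispatch it.
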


\begin{proof} Write $p = (u,0,v) \cdot (0,y,0)$, so that $\ell^{\ast}(p) = (0,u,v) + L_{y}$. Since $\|p\| \leq 1/10$, in particular $|u| + |v| \leq 1/5$. By the previous proposition, we already know that
\begin{displaymath} [(0,u,v) + L_{y}] \cap B(1) = \ell^{\ast}(p) \cap B(1) \subset \Pi_{2r}(u_{0},v_{0},y_{0}), \end{displaymath}
where we have written $q = (u_{0},0,v_{0}) \cdot (0,y_{0},0)$.
Since $(0,u,v) \in B(1)$, we know that $(0,u,v) \in
\ell^{\ast}(p)\cap \Pi_{2r}(u_{0},v_{0},y_{0})$. But
\begin{displaymath} \Pi_{2r}(u_{0},v_{0},y_{0}) \cap \{x = 0\} = \{(0,u',v') : (u',v') \in (u_{0},v_{0}) + \mathcal{R}_{y_{0}}(r)\}, \end{displaymath}
so we may deduce that
\begin{equation}\label{form7} (u,v) \in (u_{0},v_{0}) + \mathcal{R}_{y_{0}}(r). \end{equation}
Moreover, in Remark \ref{rem1} we noted that
$\Pi_{2r}(u_{0},v_{0},y_{0}) \cap B(1)$ is contained in the $\sim
r$-neighbourhood $T$ of the line $(0,u_{0},v_{0}) + L_{y_{0}}$.
Therefore also $(0,u,v) + L_{y} \cap B(1) \subset T$. This implies
that $\angle(L_{y},L_{y_{0}}) \lesssim r$, and hence $|y - y_{0}|
\lesssim r$.

Now, we want to use \eqref{form7} and $|y - y_{0}| \lesssim r$ to
deduce that $d_{\He}(p,q) \lesssim r$. We first expand
\begin{equation}\label{form8} d_{\He}(p,q) = \|(u_{0} - u,y_{0} - y,v_{0} - v + y_{0}(u_{0} - u) + \tfrac{1}{2}(u - u_{0})(y_{0} - y)\|. \end{equation}
Then, using the definition of $\mathcal{R}_{y_{0}}(r) =
M_{y}(\mathcal{R}_{0}(r))$, we note that \eqref{form7} is
equivalent to
\begin{displaymath} (u - u_{0},v - v_{0} + y_{0}(u - u_{0})) \in \mathcal{R}_{r}(0). \end{displaymath}
Combined with $|y - y_{0}| \lesssim r$, and recalling the
definition of $\|\cdot\|$, this shows that the right hand side of
\eqref{form8} is bounded by $\lesssim r$, as claimed. \end{proof}

We already noted in Remark \ref{rem2} that the (modified) $2r$-plates containing $\ell^{\ast}(B(p_{1},r))$ and $\ell^{\ast}(B(p_{2},r))$ have (almost) the same direction if the points $p_{1},p_{2}$ have (almost) the same $y$-coordinate. In this case, if $d_{\He}(p_{1},p_{2}) \geq Cr$, it is natural to expect that $\ell^{\ast}(B(p_{1},r))$ and $\ell^{\ast}(B(p_{2},r))$ are disjoint, at least inside $B(1)$. The next lemma verifies this intuition.

\begin{lemma}\label{lemma2} Let $p_{1} = (u_{1},0,v_{1}) \cdot (0,y_{1},0) \in B_{\He}(1)$ and $p_{2} = (u_{2},0,v_{2}) \cdot (0,y_{2},0) \in B_{\He}(1)$ be points with the properties
\begin{equation}\label{form37} |y_{1} - y_{2}| \leq r \quad \text{and} \quad \ell^{\ast}(B_{\He}(p_{1},r)) \cap \ell^{\ast}(B_{\He}(p_{2},r)) \cap B(1) \neq \emptyset. \end{equation}
Then, $d_{\He}(p_{1},p_{2}) \lesssim r$. \end{lemma}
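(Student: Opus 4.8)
The plan is to imitate the structure of Proposition \ref{prop2}, which covers the special case $p_{2} = 0$ (or rather $\|p_{2}\|$ small), and to extract the needed geometric information from the hypothesis \eqref{form37}. Write $p_{i} = (u_{i},0,v_{i}) \cdot (0,y_{i},0)$ so that $\ell^{\ast}(p_{i}) = (0,u_{i},v_{i}) + L_{y_{i}}$ by \eqref{form27}/\eqref{form28}. By Proposition \ref{prop1}, $\ell^{\ast}(B_{\He}(p_{i},r)) \subset \Pi_{2r}(u_{i},v_{i},y_{i})$, so the non-emptiness assumption in \eqref{form37} gives a point $z \in \Pi_{2r}(u_{1},v_{1},y_{1}) \cap \Pi_{2r}(u_{2},v_{2},y_{2}) \cap B(1)$. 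The first step is to use Remark \ref{rem1}: the slice $\Pi_{2r}(u_{i},v_{i},y_{i}) \cap \{|x|\le 1\}$ is contained in the $\sim r$-neighbourhood of the central line $(0,u_{i},v_{i}) + L_{y_{i}}$. Hence the two central lines pass within $\lesssim r$ of the common point $z$; combined with $|y_{1} - y_{2}| \le r$ (so $\angle(L_{y_{1}},L_{y_{2}}) \lesssim r$ and the lines are nearly parallel), this forces the two lines $(0,u_{i},v_{i}) + L_{y_{i}}$ to be within Hausdorff distance $\lesssim r$ of each other throughout $\{|x|\le 1\}$.

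The second step converts this closeness of the dual lines into closeness of $(u_{1},v_{1})$ and $(u_{2},v_{2})$. Evaluating the two lines at $x = 0$ gives $|(0,u_{1},v_{1}) - (0,u_{2},v_{2})| \lesssim r$, i.e. $|u_{1} - u_{2}| + |v_{1} - v_{2}| \lesssim r$. Actually, to match the parabolic rectangle bookkeeping used in Propositions \ref{prop1} and \ref{prop2}, it is cleaner to argue as there: the inclusion of $z$ in both modified plates, intersected with an appropriate plane $\{x = s\}$ for some $|s|\le 1$, together with $|y_1 - y_2|\le r$, shows that $(u_2, v_2)$ lies in $(u_1, v_1) + \mathcal{R}_{Cr}(y_1)$ for an absolute constant $C$ — equivalently $|u_1 - u_2|\le Cr$ and $|v_1 - v_2 + y_1(u_1 - u_2)|\le C r^2$. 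Together with $|y_1 - y_2|\le r$, these are exactly the three bounds that appeared in the proof of Proposition \ref{prop1}.

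The final step is the same computation that closes Propositions \ref{prop1} and \ref{prop2}: expand
\begin{equation}\label{form8a} d_{\He}(p_{1},p_{2}) = \|(u_{1} - u_{2},\, y_{1} - y_{2},\, v_{1} - v_{2} + y_{1}(u_{1} - u_{2}) + \tfrac{1}{2}(u_{2} - u_{1})(y_{1} - y_{2}))\|, \end{equation}
and observe that each of the three entries is controlled: $|u_1 - u_2|\lesssim r$, $|y_1 - y_2|\le r$, and $|v_1 - v_2 + y_1(u_1 - u_2)| \lesssim r^2$ while $|(u_2 - u_1)(y_1 - y_2)|\lesssim r^2$, so the whole third coordinate is $\lesssim r^2$. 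By the definition of the Korányi norm $\|(x,y,t)\| = \sqrt[4]{(x^2+y^2)^2 + 16t^2}$, this yields $d_{\He}(p_1,p_2) \lesssim r$, as desired. (The a priori bound $p_1, p_2 \in B_{\He}(1)$ is used only to keep all the points involved inside $B(1)$, so that the slicing arguments of Remark \ref{rem1} apply.)

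The main obstacle I expect is Step 1: turning the bare non-emptiness of an intersection of two \emph{modified} plates (which are thickened, curved objects, unions of light rays over an $r$-arc of directions) into a genuine closeness statement about the central lines, uniformly over the slice. The hypothesis $|y_1 - y_2|\le r$ is essential here — without near-parallelism two $1\times r\times r^2$ plates can intersect while their centres are far apart — and the right way to exploit it is, as in Remark \ref{rem1} and the proof of Proposition \ref{prop2}, to pass to a fixed slice $\{x = s\}$ where both slices are translates of the \emph{same} parabolic rectangle $\mathcal{R}_{2r}(y_1)$ (up to the negligible discrepancy between $\mathcal{R}_{2r}(y_1)$ and $\mathcal{R}_{2r}(y_2)$ when $|y_1-y_2|\le r$) shifted along nearly-parallel lines; intersecting non-trivially then forces the shifts to agree up to $\lesssim r$ in the $u$-direction and $\lesssim r^2$ in the $v$-direction.
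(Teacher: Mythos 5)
Your plan is in the right spirit and matches the paper's endgame (slice both dual objects by a plane $\{x=s\}$, read off the parabolic-rectangle containment $(u_2,v_2)\in(u_1,v_1)+\mathcal{R}_{Cr}(y_1)$, then plug into the Kor\'anyi-norm formula). But there is a genuine gap in Step~2, and the paper's proof contains a preliminary reduction you are missing that exists precisely to close it.

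The paper first reduces to the case $y_1=y_2=y$: pick $p_2'\in B_{\He}(p_2,r)$ with $y$-coordinate equal to $y_1$ (possible because $|y_1-y_2|\le r$ and the $xy$-projection of $B_{\He}(p_2,r)$ is a Euclidean $r$-disc), replace $p_2$ by $p_2'$ and $r$ by $2r$, and note the conclusion is preserved by the triangle inequality. After this reduction both plates $\mathcal{P}_{Cr}(y)$ carry the \emph{same} light ray $L_y(s)$, so the slicing identity $(u_1,v_1)-(u_2,v_2)=\vec{r}_2-\vec{r}_1+L_{y'_2}(s)_{2,3}-L_{y'_1}(s)_{2,3}$ has the last two terms cancel exactly, and the containment $(u_1-u_2,v_1-v_2)\in M_y(\mathcal{R}_{2Cr}(0))$ (hence $|v_1-v_2+y(u_1-u_2)|\lesssim r^2$) is immediate.

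Without that reduction, your Step~2 is where the argument is actually delicate. With $y_1\neq y_2$ the two slices are translates of the \emph{different} rectangles $\mathcal{R}_{2r}(y_1)$, $\mathcal{R}_{2r}(y_2)$, shifted by the \emph{different} light rays $L_{y'_1}(s)$, $L_{y'_2}(s)$, and the third coordinate of $L_{y'_2}(s)-L_{y'_1}(s)=(0,\,s(y'_1-y'_2),\,s(y'^{2}_2-y'^{2}_1)/2)$ is only $O(r)$, not $O(r^2)$. So the claim that ``intersecting non-trivially forces the shifts to agree up to $\lesssim r^2$ in the $v$-direction'' is false as stated; nothing agrees to $r^2$. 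What is true is that in the specific combination $v_1-v_2+y_1(u_1-u_2)$ the $O(r)$ pieces cancel: writing it out one obtains a term $s(y'_1-y'_2)\bigl[y_1-(y'_1+y'_2)/2\bigr]$, and the second factor is itself $O(r)$, so the product is $O(r^2)$. This cancellation is the content you would have to exhibit, and it is not something that ``follows as in Proposition~\ref{prop2}'' because there $p$ is a single point, not a ball. Your alternative Step~2 (``evaluate the two lines at $x=0$'') has the same problem: it yields only $|v_1-v_2|\lesssim r$, which is insufficient since you need an $r^2$-bound on $v_1-v_2+y_1(u_1-u_2)$ and $y_1$ can be of order $1$. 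In short: either perform the $y_1=y_2$ reduction as in the paper, or carry out the explicit cancellation; as written, the proposal asserts a quantitative estimate that does not follow from the qualitative picture of ``nearly parallel plates.''
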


\begin{proof} We may reduce to the case $y_{1} = y_{2}$ by the following argument. Start by choosing a point $p_{2}' \in B_{\He}(p_{2},r)$ such that the $y$-coordinate of $p_{2}'$ equals $y_{1}$. This is possible, because $|y_{1} - y_{2}| \leq r$, and the projection of $B_{\He}(p_{2},r)$ to the $xy$-plane is a Euclidean disc of radius $r$. Then, notice that $B_{\He}(p_{2},r) \subset B_{\He}(p_{2}',2r)$, so
\begin{displaymath} \ell^{\ast}(B_{\He}(p_{1},2r)) \cap \ell^{\ast}(B_{\He}(p_{2}',2r)) \cap B(1) \neq \emptyset. \end{displaymath}
Now, if we have already proven the lemma in the case $y_{1} = y_{2}$ (and for "$2r$" in place of "$r$"), it follows that $d_{\He}(p_{1},p_{2}') \lesssim r$, and finally $d_{\He}(p_{1},p_{2}) \leq d_{\He}(p_{1},p_{2}') + d_{\He}(p_{2}',p_{2}) \lesssim r$.

Let us then assume that $y_{1} = y_{2} = y$. It follows from \eqref{form37} and the first inclusion in Proposition \ref{prop1} combined with the first inclusion in \eqref{form9} that
\begin{displaymath} ((0,u_{1},v_{1}) + \mathcal{P}_{Cr}(y)) \cap ((0,u_{2},v_{2}) + \mathcal{P}_{Cr}(y)) \neq \emptyset \end{displaymath}
for some absolute constant $C > 0$. Let "$x$" be a point in the intersection, and (using the definition of $\mathcal{P}_{Cr}(y)$), express $x$ in the two following ways:
\begin{displaymath} (0,u_{1},v_{1}) + (0,\vec{r}_{1}) + L_{y}(s) = x = (0,u_{2},v_{2}) + (0,\vec{r}_{2}) + L_{y}(s), \end{displaymath}
where $\vec{r_{1}} \in \mathcal{R}_{Cr}(y) = M_{y}(\mathcal{R}_{Cr}(0))$ and $\vec{r_{2}} \in M_{y}(\mathcal{R}_{r}(0))$, and $s \in [-1,1]$. The terms $L_{y}(s)$ conveniently cancel out, and we find that
\begin{displaymath} (u_{1},v_{1}) - (u_{2},v_{2}) = \vec{r_{2}} - \vec{r_{1}} \in M_{y}(\mathcal{R}_{2Cr}(0)), \end{displaymath}
or equivalently
\begin{equation}\label{form39} (u_{1} - u_{2},v_{1} - v_{2} + y(u_{1} - u_{2})) = M_{y}^{-1}(u_{1} - u_{2},v_{1} - v_{2}) \in \mathcal{R}_{2Cr}(0). \end{equation}
We have already computed in \eqref{form8} that
\begin{displaymath} d_{\He}(p_{1},p_{2}) = \|(u_{1} - u_{2},0,v_{1} - v_{2} + y(u_{1} - u_{2})\|, \end{displaymath}
and now it follows immediately from \eqref{form39} that $d_{\He}(p_{1},p_{2}) \lesssim r$. \end{proof}

\section{Discretising Theorem \ref{main}}\label{s:discretisation}

The purpose of this section is to reduce the proof of Theorem
\ref{main} to Theorem \ref{mainTechnical} which concerns
\emph{$(\delta,3)$-sets}. We start by defining these precisely:
\begin{definition}[($\delta,t,C)$-set]\label{deltaTSet} Let $(X,d)$ be a metric space, and let $t \geq 0$ and $C,\delta > 0$. A non-empty bounded set $P \subset X$ is called a \emph{$(\delta,t,C)$-set} if
\begin{displaymath} |P \cap B(x,r)|_{\delta} \leq C r^{t} \cdot |P|_{\delta}, \qquad x \in X, \, r \geq \delta. \end{displaymath}
Here $|A|_{\delta}$ is the smallest number of balls of radius $\delta$ needed to cover $A$.  A family of sets $\mathcal{B}$ (typically: disjoint $\delta$-balls) is called a $(\delta,t,C)$-set if $P := \cup \mathcal{B}$ is a $(\delta,t,C)$-set.
\end{definition}

If $P \subset \He$, or $\mathcal{B} \subset \mathcal{P}(\He)$, the $(\delta,t,C)$-set condition is always tested relative to the metric $d_{\He}$. We then state a $\delta$-discretised version of Theorem \ref{main} for sets of dimension $3$:

\begin{thm}\label{mainTechnical} For every $\eta > 0$, there exists $\epsilon > 0$ and $\delta_{0} > 0$ such that the following holds for all $\delta \in (0,\delta_{0}]$. Let $\mathcal{B}$ be a non-empty $(\delta,3,\delta^{-\epsilon})$-set of $\delta$-balls contained in $B_{\He}(1)$, with $\delta$-separated centres. Let $\mu = \mu_{f}$ be the measure on $\mathbb{H}$ with density
\begin{equation}\label{form34} f := (\delta^{4}|\mathcal{B}|)^{-1} \sum_{B \in \mathcal{B}} \mathbf{1}_{B}. \end{equation}
Then,
\begin{displaymath} \int_{S^{1}} \|\pi_{e}\mu\|_{L^{2}}^{2} \, d\mathcal{H}^{1}(e) \leq \delta^{-\eta}. \end{displaymath}
\end{thm}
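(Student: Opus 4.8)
\textbf{Proof plan for Theorem \ref{mainTechnical}.}

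The plan is to transfer the $L^2$-estimate for vertical projections into a point-plate incidence count in $\R^3$ via the duality of Section \ref{s:duality}, and then to apply the Guth--Wang--Zhang Kakeya inequality for plates. First I would use \eqref{form22} to rewrite the left-hand side (after harmlessly replacing $S^1$ by the subfamily $S$ of directions making an angle $\leq 45^\circ$ with the $y$-axis, which is the version that will actually be needed and from which the full statement is recovered by a rotation/covering argument) as
\begin{displaymath} \int_{S} \|\pi_{e}\mu\|_{L^{2}}^{2}\, d\mathcal{H}^{1}(e) \sim \int_{\mathcal{L}_{\angle}} Xf(\ell)^{2}\, d\mathfrak{m}(\ell), \end{displaymath}
where $Xf(\ell) = \int_{\ell} f\, d\mathcal{H}^1$ is the X-ray transform of the density $f$ from \eqref{form34}. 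Since $\mathfrak{m}$ is the push-forward of Lebesgue measure under $p^{\ast} \mapsto \ell(p^{\ast})$, this is an integral over $p^{\ast} \in \R^3$ (restricted to $\{|a| \leq 1\}$, a set of finite measure) of $Xf(\ell(p^{\ast}))^2$. By the incidence-duality Lemma \ref{l:IncidenceDuality}, a horizontal line $\ell(p^{\ast})$ meets a ball $B = B_{\He}(q,\delta)$ essentially iff $p^{\ast}$ lies in the dual set $\ell^{\ast}(B)$, which by Proposition \ref{prop1} is comparable to a modified $\delta$-plate $\Pi_{2\delta}$, i.e. (inside $B(1)$) to an ordinary $\delta$-plate tangent to the cone $\mathcal{C}$ of dimensions $\sim 1 \times \delta \times \delta^2$. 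So, up to constants and normalisations, $Xf(\ell(p^{\ast})) \sim (\delta^4 |\mathcal{B}|)^{-1} \delta \cdot \#\{B \in \mathcal{B} : p^{\ast} \in \ell^{\ast}(B)\}$ — a count of $\delta$-plates through the point $p^{\ast}$ — so the whole quantity becomes $\sim (\delta^{4}|\mathcal{B}|)^{-2}\delta^{2}\int \big(\sum_{B} \mathbf{1}_{P_B}\big)^2$, where $P_B$ is the $\delta$-plate dual to $B$.

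Next I would bring in the non-concentration hypothesis. The plate family $\{P_B : B \in \mathcal{B}\}$ is indexed by a $(\delta,3,\delta^{-\epsilon})$-set of Heisenberg $\delta$-balls; I would show this translates, under the duality, into the statement that the plates form a \emph{$(\delta,3)$-set of plates in the sense of \cite{MR4151084}} — i.e. for every $\delta$-ball (or $\delta$-plate) $T$ in $\R^3$, the number of $P_B$ essentially contained in $T$ is $\lesssim \delta^{-\epsilon}$ (the right power since a $3$-dimensional $d_{\He}$-Frostman condition on $\delta$-balls matches the $r^{-3}$ plate-numerology noted in the remark after Definition \ref{def:plate}, and Lemma \ref{lemma2} together with Proposition \ref{prop2} give the needed geometric control: distinct Heisenberg balls at $d_{\He}$-distance $\gtrsim \delta$ with close $y$-coordinates have essentially disjoint duals). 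With this in hand, the Kakeya inequality \cite[Lemma 1.4]{MR4151084} for $(\delta,3)$-sets of plates bounds $\int \big(\sum_{B}\mathbf{1}_{P_B}\big)^2$ by $\delta^{-C\epsilon}$ times the "trivial" bound $|\mathcal{B}| \cdot (\text{volume of a plate}) = |\mathcal{B}|\cdot \delta^3$, possibly with an extra $|\mathcal{B}|\delta^3$ overlap term; carrying this through gives $(\delta^4|\mathcal{B}|)^{-2}\delta^2 \cdot \delta^{-C\epsilon}|\mathcal{B}|\delta^3 = \delta^{-C\epsilon}\cdot \delta^{-3}|\mathcal{B}|^{-1}$, and since $|\mathcal{B}| \gtrsim \delta^{-3+C'\epsilon}$ for a $(\delta,3,\delta^{-\epsilon})$-set of $\delta$-balls in $B_{\He}(1)$ (the non-concentration forces $\mathcal{B}$ to be nearly as large as possible), this is $\lesssim \delta^{-C''\epsilon} \leq \delta^{-\eta}$ once $\epsilon$ is chosen small relative to $\eta$.

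The main obstacle, and the step requiring the most care, is the passage from the $d_{\He}$-non-concentration of $\mathcal{B}$ to the Euclidean $(\delta,3)$-set condition on the dual plates in exactly the form \cite[Lemma 1.4]{MR4151084} demands. Two subtleties arise: (i) the duals are \emph{modified} plates $\Pi_{2\delta}$ rather than the genuine plates of \cite{MR4151084}, so I must invoke the sandwich \eqref{form9} to replace them by honest $\delta$-plates tangent to $\mathcal{C}$ while only losing absolute constants, and localise everything to $B(1)$; (ii) the Radon--Nikodym comparison $\mathfrak{m} \sim \mathfrak{h}$ only holds on $\mathcal{L}_{\angle}$, which is why the angular restriction to $S$ is essential and must be tracked throughout. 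Once these bookkeeping points are handled — using Lemma \ref{lemma2} to see that balls with nearby $y$-coordinates and $d_{\He}$-separation $\gtrsim \delta$ give quantitatively separated plates, and Proposition \ref{prop2} for the converse — the incidence input from \cite{MR4151084} plugs in cleanly, and the rest is arithmetic of exponents.
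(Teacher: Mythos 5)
Your overall strategy is the paper's: duality $\to$ plate counting $\to$ Guth--Wang--Zhang. The reductions via \eqref{form22}, Lemma \ref{l:IncidenceDuality}, and Proposition \ref{prop1} are identified correctly, as is the role of Lemma \ref{lemma2} and Proposition \ref{prop2} in translating the $d_{\He}$-non-concentration on $\mathcal{B}$ into a geometric condition on the dual plates.

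However, there is a real gap in the step where you invoke \cite[Lemma 1.4]{MR4151084}. You treat it as if it were a clean $L^{2}$ incidence estimate of the form ``for a $(\delta,3)$-set of $\delta$-plates, $\int \big(\sum \mathbf{1}_{\mathcal{P}_{B}}\big)^{2} \lesssim \delta^{-C\epsilon}\,\delta^{3}|\mathcal{B}|$.'' This is not what GWZ prove: their Lemma 1.4 is the $L^{4}$ reverse square-function inequality \eqref{GWZ}, with a right-hand side involving \emph{partial} square functions $\|S_{U}f\|_{L^{2}}^{4}$ over plates $U \parallel U_{\tau}$ at \emph{all} intermediate angular scales $s \in [R^{-1/2},1]$, i.e.\ over $\Delta$-plates with $\Delta = s^{-1}\delta$ ranging through $[\delta,1]$. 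Extracting the $L^{2}$ plate-count bound from this --- the content of Proposition \ref{prop5} --- is the crux of the argument, and it requires several nontrivial moves your plan compresses into a single phrase: (i) a wave-packet construction (smooth bumps $\varphi_{\mathcal{P}}$ with Fourier support in the dual rectangles $\theta$, modulated to lie near $\Gamma_{R}$), together with the bounded-overlap observation for fixed direction, to majorize $\int\big(\sum\mathbf{1}_{\mathcal{P}_{B}}\big)^{2}$ by $\int |Sf|^{4}$; (ii) the geometric identification --- new and essential here --- that each $U$ appearing on the right of \eqref{GWZ} is (essentially) the $\ell^{\ast}$-dual of a Heisenberg $\Delta$-ball $B_{U}$, so that $\|S_{U}f\|_{L^{2}}^{2}$ is controlled by $|\{B \in \mathcal{B}: B \subset B_{U}\}|$; (iii) the non-concentration hypothesis \eqref{form38a} must be used at every intermediate scale $\Delta$, not only at scale $\delta$; and (iv) a dyadic pigeonhole over both the angular scale $s$ and the richness $\rho(U)$ is needed to sum up the contributions. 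Your two listed ``subtleties'' (modified vs.\ genuine plates, restriction to $\mathcal{L}_{\angle}$) are indeed handled as you describe, but they are the easy bookkeeping; the hard part is precisely the passage from GWZ's $L^{4}$ square-function inequality to the $L^{2}$ incidence count via the multi-scale ball-plate duality, which your plan does not supply.
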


The proof of Theorem \ref{mainTechnical} will be given in Section
\ref{s:GWZ}. Deducing Theorem \ref{main} from Theorem
\ref{mainTechnical} involves two steps. The first one, carried out in Section
\ref{s:mainProof}, is to reduce Theorem
\ref{main} to a $\delta$-discretised version, which concerns
$(\delta,t)$-sets with all possible values $t \in [0,3]$. This
statement is Theorem \ref{mainDiscrete} below, a simplified
version of which was stated as Theorem \ref{mainMeasure} in the
introduction.

The second -- and less standard -- step, carried out in this section, is to deduce
Theorem \ref{mainDiscrete} from Theorem \ref{mainTechnical}.
Heuristically, Theorem \ref{mainTechnical} is nothing but the $3$-dimensional case of Theorem \ref{mainDiscrete} -- although in this case the statement looks more quantitative. We therefore need to argue that  \emph{if we already have
Theorem \ref{mainDiscrete} for sets of dimension $3$, then we also have it for sets of dimension
$t \in [0,3]$.} The heuristic is simple: given a set $K
\subset \He$ of dimension $t \in [0,3]$, we start by "adding"
(from the left) to $K$ another -- random -- set $H \subset \He$ of
dimension $3 - t$. Then, we apply the $3$-dimensional version of
Theorem \ref{mainDiscrete} to $H \cdot K$, and this gives the correct
conclusion for $K$. A crucial point is that Theorem \ref{mainDiscrete} concerns the Lebesgue measure (not the dimension) of $\pi_{e}(K)$. This quantity is invariant under left translating $K$. This allows us to control $\mathrm{Leb}(\pi_{e}(H \cdot K))$ in a useful way.

We turn to the details. To deduce Theorem \ref{main} from Theorem \ref{mainTechnical}, we need a corollary of Theorem \ref{mainTechnical}, stated in Corollary \ref{cor1}, which concerns slightly more general measures than ones of the form $\mu = \mu_{f}$ (as in \eqref{form34}):
\begin{definition}[$\delta$-measure] Let $\delta \in (0,1]$ and $C > 0$. A Borel measure $\mu$ on $\He$ is called a \emph{$(\delta,C)$-measure} if $\mu$ has a density with respect to Lebesgue measure, also denoted $\mu$, and the density satisfies
\begin{displaymath} \mu(x) \leq C \cdot \frac{\mu(B_{\He}(x,\delta))}{\mathrm{Leb}(B_{\He}(x,\delta))}, \qquad x \in \He. \end{displaymath}
If the constant $C > 0$ irrelevant, a $(\delta,C)$-measure may also be called a $\delta$-measure. \end{definition}

We will use the following notion of \emph{$\delta$-truncated Riesz energy}:
\begin{equation}\label{rieszEnergy} I_{s}^{\delta}(\mu) := \iint \frac{d\mu(x)\,d\mu(y)}{d_{\He,\delta}(x,y)^{s}}, \end{equation}
where $\mu$ is a Radon measure, $0 \leq s \leq 4$, and $d_{\He,\delta}(x,y) := \max\{d_{\He}(x,y),\delta\}$.

\begin{cor}\label{cor1} For every $\eta > 0$, there exists $\delta_{0},\epsilon_{0} > 0$ such that the following holds for all $\delta \in (0,\delta_{0}]$ and $\epsilon \in (0,\epsilon_{0}]$. Let $\mu$ be a $(\delta,\delta^{-\epsilon})$-probability measure on $B_{\He}(1)$ with $I_{3}^{\delta}(\mu) \leq \delta^{-\epsilon}$. Then, there exists a Borel set $G \subset \He$ such that $\mu(G) \geq 1 - \delta^{\epsilon_{0}}$, and
\begin{equation}\label{form33} \int_{S^{1}} \|\pi_{e}(\mu|_{G})\|_{L^{2}}^{2} \, d\mathcal{H}^{1}(e) \leq \delta^{-\eta}. \end{equation}
\end{cor}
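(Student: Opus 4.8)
The plan is to deduce Corollary \ref{cor1} from Theorem \ref{mainTechnical} by a standard pigeonholing argument that extracts, out of a general $(\delta,\delta^{-\epsilon})$-measure with bounded $\delta$-truncated $3$-energy, a well-structured $(\delta,3,\delta^{-O(\epsilon)})$-set of $\delta$-balls carrying most of the mass. First I would discretise $\mu$: cover $B_{\He}(1)$ by a family of essentially disjoint Heisenberg $\delta$-balls $\{B_j\}$ with $\delta$-separated centres, and for each $j$ record the mass $m_j := \mu(B_j)$. Since $\mu$ is a $(\delta,\delta^{-\epsilon})$-measure, the density of $\mu$ on $B_j$ is $\lesssim \delta^{-\epsilon} m_j / \mathrm{Leb}(B_\He(\delta)) \sim \delta^{-\epsilon} m_j \delta^{-4}$, so on each ball $\mu$ is comparable (up to $\delta^{-\epsilon}$) to a multiple of the normalised indicator $\delta^{-4}\mathbf{1}_{B_j}$. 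Next I would dyadically pigeonhole the masses $m_j$: there are only $O(\log(1/\delta))$ relevant dyadic scales (masses below $\delta^{100}$, say, contribute a negligible total mass and can be discarded), so there is a single dyadic value $m \sim 2^{-k}$ and a subfamily $\mathcal{B}_0 := \{B_j : m_j \sim m\}$ such that $\mu(\cup \mathcal{B}_0) \gtrsim 1/\log(1/\delta) \geq \delta^{\epsilon}$. After this step, $\mu$ restricted to $\cup\mathcal{B}_0$ is comparable to $|\mathcal{B}_0|^{-1}\delta^{-4}\sum_{B\in\mathcal{B}_0}\mathbf{1}_B$, up to a loss of $\delta^{-O(\epsilon)}$ in the $L^2$-norm of its projections.

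The heart of the argument is to upgrade $\mathcal{B}_0$ to a $(\delta,3,\delta^{-O(\epsilon)})$-set. This is where the energy hypothesis $I_3^\delta(\mu)\le\delta^{-\epsilon}$ enters. The strategy is a classical "energy $\Rightarrow$ non-concentration" extraction (as in, e.g., the work on Frostman measures and $(\delta,s)$-sets): for each ball $B\in\mathcal{B}_0$, define its \emph{excess} at a point $x$ and scale $r\geq\delta$ by comparing $|\mathcal{B}_0 \cap B_\He(x,r)|$ with the target bound $\delta^{-\epsilon}(r/\delta)^{3}|\mathcal{B}_0|_\delta$; a Chebyshev/pigeonholing argument over the $O(\log(1/\delta))$ dyadic scales $r$ shows that the subfamily $\mathcal{B}_1\subset\mathcal{B}_0$ of balls that are "over-concentrated" at some scale has total mass $\lesssim (\text{energy bound})\cdot \delta^{O(\epsilon)}$, hence can be removed while keeping $\mu(\cup\mathcal{B}_1)$ small and retaining $\mu(\cup(\mathcal{B}_0\setminus\mathcal{B}_1))\gtrsim\delta^{\epsilon}$. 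More precisely, one uses that $I_3^\delta(\mu)\gtrsim \sum_j m_j \cdot (\text{local }3\text{-dimensional mass density around }B_j)$, so balls violating the $(\delta,3)$-condition by a factor $\delta^{-2\epsilon}$ must carry total $\mu$-mass $\lesssim \delta^{\epsilon}$; iterating/summing over scales costs only logarithmic factors, absorbable into $\delta^{-\epsilon}$. Setting $\mathcal{B} := \mathcal{B}_0\setminus\mathcal{B}_1$ and $G := \cup\mathcal{B}$, we have $\mu(G)\geq 1-\delta^{\epsilon_0}$ after choosing $\epsilon_0$ appropriately small relative to $\epsilon$ (and discarding the negligible low-mass and over-concentrated parts), and $\mathcal{B}$ is a $(\delta,3,\delta^{-C\epsilon})$-set of $\delta$-balls with $\delta$-separated centres.

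Finally I would apply Theorem \ref{mainTechnical} to $\mathcal{B}$. Write $\nu := \mu_f$ for the model measure $f = (\delta^4|\mathcal{B}|)^{-1}\sum_{B\in\mathcal{B}}\mathbf{1}_B$ associated to $\mathcal{B}$ as in \eqref{form34}. By the density comparison from the first paragraph, pointwise on $G$ one has $\mu|_G \lesssim \delta^{-\epsilon} \cdot m\,|\mathcal{B}|\,\delta^4 \cdot \nu = \delta^{-\epsilon}\cdot(m|\mathcal{B}|\delta^4)\,\nu$, and since $m|\mathcal{B}|\sim \mu(G)\le 1$ we get $\mu|_G \lesssim \delta^{-\epsilon}\nu$ as measures (possibly after also using $m|\mathcal{B}|\gtrsim\delta^\epsilon$ the other way to see the scalings are tight). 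Hence $\|\pi_e(\mu|_G)\|_{L^2}^2 \lesssim \delta^{-2\epsilon}\|\pi_e\nu\|_{L^2}^2$ for every $e$, and integrating over $S^1$ and invoking Theorem \ref{mainTechnical} (applied with $\eta/2$ in place of $\eta$, which fixes the admissible $\epsilon$ and $\delta_0$) gives
\begin{displaymath}
\int_{S^1}\|\pi_e(\mu|_G)\|_{L^2}^2\,d\mathcal{H}^1(e) \lesssim \delta^{-2\epsilon}\cdot\delta^{-\eta/2} \leq \delta^{-\eta}
\end{displaymath}
provided $\epsilon=\epsilon(\eta)$ is chosen small enough (so that $2\epsilon<\eta/2$ and the implied constant is absorbed for $\delta\le\delta_0$). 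The main obstacle I anticipate is the bookkeeping in the second paragraph: one has to run the non-concentration extraction so that the total $\epsilon$-losses (from the dyadic mass pigeonholing, from the multi-scale removal of over-concentrated balls, and from the density comparison) all stay of size $\delta^{-O(\epsilon)}$, and then choose $\epsilon_0\ll\epsilon\ll\eta$ in the right order so that simultaneously $\mu(G)\geq 1-\delta^{\epsilon_0}$ and the final estimate \eqref{form33} holds; the incidence-geometric content is entirely in Theorem \ref{mainTechnical}, so no new geometry is needed here.
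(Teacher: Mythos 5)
Your overall strategy is the right one and matches the paper: extract a Frostman-type pointwise bound from the energy hypothesis via Chebyshev, pigeonhole over $O(\log(1/\delta))$ dyadic "local mass" scales $\mu(B_\He(x,\delta))\sim\alpha$ to obtain families of $\delta$-balls on which $\mu$ is comparable to a model measure of the form \eqref{form34}, verify the $(\delta,3,\delta^{-O(\epsilon)})$-condition, and then apply Theorem \ref{mainTechnical} together with a pointwise density comparison. However, there is a genuine gap in the middle step, and it is not just bookkeeping.

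You pigeonhole the dyadic masses $m_j$ down to a \emph{single} level $m\sim 2^{-k}$, obtaining a subfamily $\mathcal{B}_0$ with $\mu(\cup\mathcal{B}_0)\gtrsim 1/\log(1/\delta)$, and then take $G:=\cup(\mathcal{B}_0\setminus\mathcal{B}_1)$. But the corollary requires $\mu(G)\geq 1-\delta^{\epsilon_0}$, i.e.\ $G$ must carry \emph{almost all} of the mass, and this is incompatible with restricting to a single dyadic level: the best one can say for the largest level is $\mu(\cup\mathcal{B}_0)\gtrsim 1/\log(1/\delta)$, which can be, say, $1/10$ rather than $1-\delta^{\epsilon_0}$. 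This near-full-measure property is essential downstream -- in the proof of Theorem \ref{mainDiscrete} it is used precisely as $\nu(G\cap B_e)\geq\nu(G)+\nu(B_e)-1\geq\delta^\epsilon-\delta^{\epsilon_0}>0$, which fails badly if $\mu(G)$ is merely bounded below. Your remark that one gets $\mu(G)\geq 1-\delta^{\epsilon_0}$ "after discarding the negligible low-mass and over-concentrated parts" does not account for the $\log(1/\delta)-1$ dyadic levels you discarded in the pigeonholing step, which are not negligible.

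The fix is exactly what the paper does: do \emph{not} pigeonhole to a single level. Keep all $O(\log(1/\delta))$ levels $G^j$ with $\mu(G^j)\geq\delta^{2\epsilon_0}$ (the ones below this threshold really are negligible), take $G$ to be their union, apply Theorem \ref{mainTechnical} to the associated ball families $\mathcal{B}_j$ separately (your density comparison works level-by-level, since all balls at one level carry comparable mass), and \emph{sum} the resulting $L^2$ estimates over $j$. This costs one extra factor of $\log(1/\delta)$, absorbed into $\delta^{-\eta}$ by adjusting $\eta$. The rest of your argument -- the Chebyshev step producing the pointwise $3$-Frostman bound on $G_0$, the verification that each $\mathcal{B}_j$ is a $(\delta,3,\delta^{-C\epsilon_0})$-set, and the density comparison $\mu|_{G^j}\lesssim\delta^{-\epsilon}\mu_j$ -- is in the spirit of the paper and correct.
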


\begin{proof} Fix $\eta > 0$, $\epsilon \in (0,\epsilon_{0}]$, and $\delta \in (0,\delta_{0}]$.  The dependence of $\delta_{0},\epsilon_{0}$ on $\eta$ will eventually be determined by an application of Theorem \ref{mainTechnical}, but we will require at least that $\epsilon_{0} \leq \eta$.

It follows from $I^{\delta}_{3}(\mu) \leq \delta^{-\epsilon}$ and Chebychev's inequality that there exists a set $G_{0} \subset \He$ of measure $\mu(G_{0}) \geq 1 - 3\delta^{\epsilon_{0}}$ such that $\mu(B_{\He}(x,r)) \lesssim \delta^{-\epsilon - \epsilon_{0}}r^{3} \leq \delta^{-2\epsilon_{0}}r^{3}$ for all $x \in G_{0}$ and $r \geq \delta$. Now, for dyadic rationals $0 < \alpha \lesssim \delta^{3 - 2\epsilon_{0}} \leq \delta^{2}$, let
\begin{displaymath} G_{0,\alpha} := \{x \in G_{0} : \tfrac{\alpha}{2} \leq \mu(B_{\He}(x,\delta)) \leq \alpha\}. \end{displaymath}
We discard immediately the sets $G_{0,\alpha}$ with $\alpha \leq \delta^{10}$: the union of these sets has measure $\leq \delta^{5} \leq \delta^{\epsilon_{0}}$ for $\delta > 0$ small enough, so $\mu(G_{1}) \geq 1 - 2\delta^{\epsilon_{0}}$, where
\begin{displaymath} G_{1} := G_{0} \, \setminus \, \bigcup_{\alpha \leq \delta^{10}} G_{0,\alpha}. \end{displaymath}
Now, $G_{1}$ is covered by the sets $G_{0,\alpha}$ with $\delta^{10} \leq \alpha \lesssim \delta^{2}$, and the number of such sets is $m \lesssim \log(1/\delta)$. We let $\{\alpha_{1},\ldots,\alpha_{m}\}$ be an enumeration of these values of "$\alpha$", and we abbreviate $G^{j} := G_{0,\alpha_{j}}$. We note that the union of the sets $G^{j}$ with $\mu(G^{j}) \leq \delta^{2\epsilon_{0}}$ has measure at most $m \cdot \delta^{2\epsilon_{0}} \leq \delta^{\epsilon_{0}}$ (for $\delta > 0$ small), so finally
\begin{displaymath} G := G_{1} \, \setminus \, \bigcup \{G^{j} : 1 \leq j \leq m \text{ and } \mu(G^{j}) \leq \delta^{2\epsilon_{0}}\} \end{displaymath}
has measure $\mu(G) \geq 1 - 2\delta^{\epsilon_{0}} - \delta^{\epsilon_{0}} \geq 1 - \delta^{\epsilon_{0}}$.
Moreover, $G$ is covered by the sets $G^{j}$ with $\mu(G^{j}) \geq \delta^{2\epsilon_{0}}$. Re-indexing if necessary, we now assume that $\mu(G^{j}) \geq \delta^{2\epsilon_{0}}$ for all $1 \leq j \leq m$.

For $1 \leq j \leq m$ fixed, let $\mathcal{B}_{j}$ be a finitely overlapping (Vitali) cover of $G^{j}$ by balls of radius $\delta$, centred at $G^{j}$. Using the facts $G^{j} \subset G_{0}$ and $\mu(G^{j}) \geq \delta^{2\epsilon_{0}}$, and the uniform lower bound $\mu(B_{\He}(x,\delta)) \geq \alpha_{j}/2$ for $x \in G^{j}$, it is easy to check that each $\mathcal{B}_{j}$ is a $(\delta,3,\delta^{-C\epsilon_{0}})$-set with
\begin{equation}\label{form32} |\mathcal{B}_{j}| \lesssim \alpha_{j}^{-1}. \end{equation}
Thus, writing
\begin{displaymath} f_{j} := (\delta^{4}|\mathcal{B}_{j}|^{-1}) \sum_{B \in \mathcal{B}_{j}} \mathbf{1}_{B} \quad \text{and} \quad \mu_{j} := \mu_{f_{j}}, \end{displaymath}
and assuming that $\delta_{0},\epsilon_{0} > 0$ are sufficiently small in terms of $\eta$, we may deduce from Theorem \ref{mainTechnical} that
\begin{displaymath} \int_{S^{1}} \|\pi_{e}(\mu_{j})\|_{L^{2}}^{2} d\mathcal{H}^{1}(e) \leq \delta^{-\eta}, \qquad 1 \leq j \leq m. \end{displaymath}
Finally, it follows from the $(\delta,\delta^{-\epsilon})$-property of $\mu$ that
\begin{displaymath} \mu(x) \lesssim \delta^{-\epsilon} \cdot \frac{\mu(B_{\He}(x,\delta))}{\delta^{4}} \leq \delta^{-\epsilon} \cdot \frac{\alpha_{j}}{\delta^{4}} \stackrel{\eqref{form32}}{\lesssim} \frac{\delta^{-\epsilon}}{\delta^{4}|\mathcal{B}_{j}|} \leq \delta^{-\epsilon} \cdot \mu_{j}(x), \qquad x \in G^{j}. \end{displaymath}
Thus, also the density of $\pi_{e}(\mu|_{G^{j}})$ is bounded from above by the density of $\pi_{e}(\mu_{j})$:
\begin{displaymath} \int_{S^{1}} \|\pi_{e}(\mu|_{G})\|_{L^{2}}^{2} \, d\mathcal{H}^{1}(e) \lesssim \delta^{-\epsilon} \sum_{j = 1}^{m} \int_{S^{1}} \|\pi_{e}(\mu_{j})\|_{L^{2}}^{2} \, d\mathcal{H}^{1}(e) \lesssim \log(1/\delta) \cdot \delta^{-\eta - \epsilon} \leq \delta^{-3\eta}. \end{displaymath}
This completes the proof of \eqref{form33} (with "$3\eta$" in place of "$\eta$"). \end{proof}

The concrete $\delta$-measures we will consider have the form $\eta \ast_{\He} \mu$, where $\mu = \mu_{f}$ has a density of the form \eqref{form34} (these are almost trivially $\delta$-measures), and $\eta$ is a (discrete) probability measure. The notation $\eta \ast_{\He} \mu$ refers to the (non-commutative!) Heisenberg convolution of $\eta$ and $\mu$, that is, the push-forward of $\eta \times \mu$ under the group product $(p,q) \mapsto p \cdot q$. Let us verify that such measures $\eta \ast_{\He} \mu$ are also $\delta$-measures:

\begin{lemma}\label{lemma1} Let $\mu$ be $(\delta,C)$ measure, and let $\eta$ be an arbitrary Borel probability measure on $\He$. Then $\eta \ast_{\He} \mu$ is again a $(\delta,C)$-measure.
\end{lemma}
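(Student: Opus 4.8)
The plan is to unwind the definition of a $(\delta,C)$-measure and reduce the claim to a pointwise density estimate. Recall that $\eta \ast_{\He} \mu$ has a density with respect to Lebesgue measure: since $\mu$ has a density (also denoted $\mu$), and left translations $q \mapsto p \cdot q$ preserve Lebesgue measure on $\He$ (the Heisenberg group is unimodular, with Lebesgue measure as Haar measure), we have the explicit formula
\begin{displaymath} (\eta \ast_{\He} \mu)(x) = \int_{\He} \mu(p^{-1} \cdot x) \, d\eta(p), \qquad x \in \He. \end{displaymath}
First I would record, from the same left-invariance of Lebesgue measure, that $\mathrm{Leb}(B_{\He}(x,\delta)) = \mathrm{Leb}(B_{\He}(p^{-1} \cdot x, \delta))$ for every $p$, since $p^{-1} \cdot B_{\He}(x,\delta) = B_{\He}(p^{-1}\cdot x,\delta)$ (left translations are isometries of $d_{\He}$). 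Call this common value $v_{\delta} := \mathrm{Leb}(B_{\He}(\delta))$.

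The key step is then a direct computation: for fixed $x \in \He$,
\begin{align*} (\eta \ast_{\He} \mu)(x) &= \int_{\He} \mu(p^{-1} \cdot x) \, d\eta(p) \leq \int_{\He} C \cdot \frac{\mu(B_{\He}(p^{-1}\cdot x,\delta))}{v_{\delta}} \, d\eta(p) \\ &= \frac{C}{v_{\delta}} \int_{\He} \mu(p^{-1} \cdot B_{\He}(x,\delta)) \, d\eta(p) = \frac{C}{v_{\delta}} \int_{\He} \left( \int_{\He} \mathbf{1}_{B_{\He}(x,\delta)}(p \cdot q) \, d\mu(q) \right) d\eta(p), \end{align*}
where the first inequality is the $(\delta,C)$-property of $\mu$ applied at the point $p^{-1}\cdot x$, and the penultimate equality uses the isometry property noted above together with $\mu(A) = \int \mathbf{1}_A \, d\mu$. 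By Fubini's theorem the last double integral equals $(\eta \ast_{\He} \mu)(B_{\He}(x,\delta))$, by the very definition of Heisenberg convolution as the push-forward of $\eta \times \mu$ under the group product. Hence
\begin{displaymath} (\eta \ast_{\He}\mu)(x) \leq C \cdot \frac{(\eta \ast_{\He} \mu)(B_{\He}(x,\delta))}{\mathrm{Leb}(B_{\He}(x,\delta))}, \qquad x \in \He, \end{displaymath}
which is precisely the $(\delta,C)$-measure condition for $\eta \ast_{\He}\mu$.

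There is no serious obstacle here; the only point requiring a little care is the bookkeeping around left-invariance of Lebesgue measure — making sure that the density formula for $\eta \ast_{\He}\mu$ is correctly stated (which hinges on $\He$ being unimodular) and that $B_{\He}(x,\delta)$ translates to $B_{\He}(p^{-1}\cdot x,\delta)$ under left multiplication by $p^{-1}$, both of which are standard facts about the Korányi metric recalled in Section \ref{s:heis}. The exchange of integrals is justified since all integrands are non-negative, so Tonelli applies without integrability hypotheses on $\eta$ or $\mu$.
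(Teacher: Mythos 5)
Your proof is correct and follows essentially the same route as the paper's: both start from the density formula $(\eta \ast_{\He}\mu)(x) = \int \mu(p^{-1}\cdot x)\,d\eta(p)$, apply the $(\delta,C)$-property pointwise under the integral, use that left translations are $d_{\He}$-isometries preserving Lebesgue measure, and then identify $\int \mu(B_{\He}(p^{-1}\cdot x,\delta))\,d\eta(p)$ with $(\eta\ast_{\He}\mu)(B_{\He}(x,\delta))$ (the paper states this as its equation \eqref{form35}; you derive it explicitly via Tonelli).
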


\begin{proof} Recall that a $(\delta,C)$ measure is absolutely continuous by definition, so the notation "$\mu(p)$" is well-defined for Lebesgue almost every $p \in \He$. The following formulae are valid, and easy to check, for Lebesgue almost every $p \in \He$:
\begin{displaymath} (\eta \ast_{\He} \mu)(p) = \int \mu(q^{-1} \cdot p) \, d\eta(q) \end{displaymath}
and
\begin{equation}\label{form35} \frac{(\eta \ast_{\He} \mu)(B_{\He}(p,r))}{\mathrm{Leb}(B_{\He}(p,r))} = \int \frac{\mu(B_{\He}(q^{-1} \cdot p,r))}{\mathrm{Leb}(B_{\He}(p,r))} \, d\eta(q). \end{equation}
Now, if one applies the $\delta$-measure assumption to the formula on the left hand side, one obtains
\begin{displaymath} (\eta \ast_{\He} \mu)(p) \leq C \int \frac{\mu(B_{\He}(q^{-1} \cdot p,\delta))}{\mathrm{Leb}(B_{\He}(q^{-1} \cdot p,\delta))} \, d\eta(q). \end{displaymath}
Lebesgue measure is invariant under left translations, so
\begin{displaymath} \mathrm{Leb}(B_{\He}(q^{-1} \cdot p,\delta)) = \mathrm{Leb}(B_{\He}(p,\delta)). \end{displaymath}
Therefore, it follows from equation \eqref{form35} that
\begin{displaymath} (\eta \ast_{\He} \mu)(p) \leq C \cdot \frac{(\eta \ast_{\He} \mu)(B_{\He}(p,\delta))}{\mathrm{Leb}(B_{\He}(p,\delta))} \end{displaymath}
for Lebesgue almost every $p \in \He$. This is what we claimed. \end{proof}

We are then ready to state and prove the $\delta$-discretised counterpart of Theorem \ref{main}.

\begin{thm}\label{mainDiscrete} Let $0 \leq s < t \leq 3$. Then, there exist $\epsilon,\delta_{0} > 0$, depending only on $s,t$, such that the following holds for all $\delta \in (0,\delta_{0}]$. Let $\mathcal{B} \neq \emptyset$ be a $(\delta,t,\delta^{-\epsilon})$ set of $\delta$-balls with $\delta$-separated centres, all contained in $B_{\He}(1)$, and let $S \subset S^{1}$ be a Borel set of length $\mathcal{H}^{1}(S) \geq \delta^{\epsilon}$. Then, there exists $e \in S$ such that the following holds: if $\mathcal{B}' \subset \mathcal{B}$ is any sub-family with $|\mathcal{B}'| \geq \delta^{\epsilon}|\mathcal{B}|$, then
\begin{displaymath} \mathrm{Leb}(\pi_{e}(\cup \mathcal{B}')) \geq \delta^{3 - s}. \end{displaymath}
In particular, $\pi_{e}(\cup \mathcal{B}')$ cannot be covered by fewer than $\delta^{-s}$ parabolic balls of radius $\delta$. \end{thm}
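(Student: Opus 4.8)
\textbf{Reduction to the case $t=3$ via random thickening.} The plan is to deduce the statement for arbitrary $t \in (0,3]$ from the (essentially) $t=3$ result encoded in Corollary \ref{cor1}. Given a $(\delta,t,\delta^{-\epsilon})$-set $\mathcal{B}$ of $\delta$-balls, we form the measure $\mu = \mu_f$ with $f := (\delta^4|\mathcal{B}|)^{-1}\sum_{B\in\mathcal{B}}\mathbf{1}_B$, and we left-convolve it (in the Heisenberg sense) with a random discrete probability measure $\eta$ whose support is a $(\delta, 3-t, \delta^{-\epsilon})$-configuration of $\delta$-balls in $B_\He(1)$, chosen so that with positive probability the ``product set'' $\eta \ast_\He \mu$ is a $(\delta,\delta^{-C\epsilon})$-measure (automatic from Lemma \ref{lemma1}) which moreover satisfies the energy bound $I_3^\delta(\eta\ast_\He\mu) \leq \delta^{-C\epsilon}$. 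This last point is where the $(3-t)$-dimensionality of $\eta$ and the $t$-dimensionality of $\mathcal{B}$ combine: heuristically $\eta\ast_\He\mu$ has a $t + (3-t) = 3$-dimensional non-concentration property. A second-moment/Fubini computation over the randomness in $\eta$, together with the fact that $d_\He$-balls behave well under left translation, should give $\mathbb{E}\, I_3^\delta(\eta\ast_\He\mu) \lesssim \delta^{-C\epsilon}$, so we may fix a good choice of $\eta$.

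\textbf{Applying Corollary \ref{cor1} and transferring back.} With $\nu := \eta\ast_\He\mu$ a $(\delta,\delta^{-C\epsilon})$-probability measure on $B_\He(1')$ (a slightly larger ball) with $I_3^\delta(\nu)\leq \delta^{-C\epsilon}$, Corollary \ref{cor1} furnishes a Borel set $G$ with $\nu(G)\geq 1-\delta^{\epsilon_0}$ and $\int_{S^1}\|\pi_e(\nu|_G)\|_{L^2}^2\,d\mathcal{H}^1(e)\leq \delta^{-\eta}$. Since $\mathcal{H}^1(S)\geq\delta^{\epsilon}$, an averaging argument over $e\in S$ produces $e\in S$ with $\|\pi_e(\nu|_G)\|_{L^2}^2 \leq \delta^{-\eta-\epsilon}$. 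The key translation-invariance input \eqref{projLeb} now enters: because $\nu$ is built by left-translating copies of $\mu$, we have $\mathrm{Leb}(\pi_e(\cup\mathcal{B}')) = \mathrm{Leb}(\pi_e(q^{-1}\cdot(\cup\mathcal{B}')))$ for every $q$, so an $L^2$ density bound for $\pi_e(\nu|_G)$ transfers to an $L^2$ density bound for a substantial portion of each translated copy $\pi_e(q\cdot(\cup\mathcal{B}'))$ of $\pi_e(\cup\mathcal{B}')$. Concretely: if $\mathcal{B}'\subset\mathcal{B}$ has $|\mathcal{B}'|\geq\delta^{\epsilon}|\mathcal{B}|$, then $\nu$ restricted to the ``$\mathcal{B}'$-part'' still has mass $\gtrsim\delta^{\epsilon}$, and after removing the small-mass set $\He\setminus G$ we still retain a $\gtrsim\delta^{\epsilon}$-fraction; Cauchy--Schwarz ($\|g\|_1^2 \leq \|g\|_2^2\cdot\mathrm{Leb}(\mathrm{supp}\,g)$) applied to the pushforward density then yields $\mathrm{Leb}(\pi_e(\cup\mathcal{B}'))\gtrsim \delta^{2\epsilon}/\|\pi_e(\nu|_G)\|_{L^2}^2 \gtrsim \delta^{2\epsilon+\eta+\epsilon}$, and choosing $\eta$ and $\epsilon$ small enough relative to $3-s>0$ gives the claimed lower bound $\delta^{3-s}$ (with room to spare, since $s<t\leq 3$ — in fact $s<3$ alone suffices here once we absorb the $\delta^{-\epsilon}$-losses, and the restriction $s<t$ is what gives the $\epsilon$-room). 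The final sentence about covering numbers follows since a cover of $\pi_e(\cup\mathcal{B}')$ by $N$ parabolic $\delta$-balls would force $\mathrm{Leb}(\pi_e(\cup\mathcal{B}')) \lesssim N\delta^{3}$ (parabolic $\delta$-balls have Lebesgue measure $\sim\delta^3$), hence $N\gtrsim\delta^{-s}$.

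\textbf{Main obstacle.} The delicate step is the random thickening: one must choose the distribution of $\eta$ so that (a) $\eta$ itself is supported on a genuine $(3-t)$-dimensional-type configuration inside a fixed ball, (b) the Heisenberg product $\cup_{q\in\mathrm{supp}\,\eta} q\cdot(\cup\mathcal{B})$ fills out a $3$-dimensional-type set in the metric $d_\He$, uniformly enough that the truncated energy $I_3^\delta(\eta\ast_\He\mu)$ is controlled in expectation, and (c) this must survive passing to the arbitrary large subfamily $\mathcal{B}'$. Because Heisenberg left-translation is non-commutative and distorts the $xy$-projection by shears depending on the translating point, verifying that the ``$t$-dimensional $\times$ $(3-t)$-dimensional'' product really behaves $3$-dimensionally requires a careful geometric estimate on how $d_\He$-balls of $\mathcal{B}$ spread when translated by $\mathrm{supp}\,\eta$ — this is the technical heart of the reduction and is presumably carried out via Proposition \ref{prop5}, whose non-concentration hypothesis \eqref{form40} is the clean statement one wants to feed into Corollary \ref{cor1}.
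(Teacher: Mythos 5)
Your plan is the same as the paper's: left-convolve the uniform measure on $\cup\mathcal{B}$ with a $(3-t)$-dimensional discrete probability measure so that the product is $3$-regular (this is exactly Proposition \ref{prop:aug}), note that it is a $\delta$-measure (Lemma \ref{lemma1}), feed it to Corollary \ref{cor1}, average over $e\in S$, run Cauchy--Schwarz, and transfer back to $\cup\mathcal{B}'$ using the translation invariance \eqref{projLeb}. (The paper phrases this as a contradiction, you phrase it directly; that distinction is cosmetic. Your last sentence points at Proposition \ref{prop5} as the feeder into Corollary \ref{cor1}, but the actual chain is Proposition \ref{prop:aug} $\to$ Corollary \ref{cor1}, with Proposition \ref{prop5} living inside the proof of Theorem \ref{mainTechnical}; minor mislabel.)

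There is, however, a concrete gap in the ``transferring back'' step that changes the final numerics. Write $H:=\mathrm{supp}(\eta)$, $|H|\sim\delta^{t-3}$, and $B_e:=H\cdot(\cup\mathcal{B}')$. The Cauchy--Schwarz inequality $\|g\|_{L^1}^2\leq\|g\|_{L^2}^2\cdot\mathrm{Leb}(\operatorname{spt} g)$ applied to $g=\pi_e(\nu|_{G\cap B_e})$ lower bounds $\mathrm{Leb}(\pi_e(B_e))$, not $\mathrm{Leb}(\pi_e(\cup\mathcal{B}'))$: the support of the pushforward is $\pi_e(G\cap B_e)\subset\pi_e(B_e)$, which is a union of $|H|$ left-translates $\pi_e(q\cdot(\cup\mathcal{B}'))$. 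Each translate has the same Lebesgue measure as $\pi_e(\cup\mathcal{B}')$ by \eqref{projLeb}, so all you can conclude is
\begin{displaymath}
\mathrm{Leb}\big(\pi_e(\cup\mathcal{B}')\big)\ \geq\ |H|^{-1}\cdot\mathrm{Leb}\big(\pi_e(B_e)\big)\ \gtrsim\ \delta^{3-t}\cdot\frac{\delta^{2\epsilon}}{\|\pi_e(\nu|_G)\|_{L^2}^2}\ \gtrsim\ \delta^{3-t+3\epsilon+\eta},
\end{displaymath}
with the crucial extra factor $\delta^{3-t}$ missing from your displayed estimate. For this to beat $\delta^{3-s}$ you need $3\epsilon+\eta\leq t-s$, so the hypothesis $s<t$ is genuinely load-bearing here. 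Your parenthetical ``in fact $s<3$ alone suffices'' is therefore false and is precisely the symptom of the dropped $\delta^{3-t}$. (It is also easy to see directly that $\mathrm{Leb}(\pi_e(\cup\mathcal{B}'))\lesssim\delta^3|\mathcal{B}'|\lesssim\delta^{3-t}$ always, so a bound like $\gtrsim\delta^{3\epsilon+\eta}$ cannot hold for $t$ bounded away from $3$.)
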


\begin{proof} To reach a contradiction, assume that there exists a $(\delta,t,\delta^{-\epsilon})$-set $\mathcal{B}$ of $\delta$-balls with $\delta$-separated centres, contained in $B_{\He}(1)$, and violating the conclusion of Theorem \ref{mainDiscrete}: there exists $s < t$, and for every $e \in S$ (Borel subset of $S^{1}$ of length $\mathcal{H}^{1}(S) \geq \delta^{\epsilon}$), there exists a subset $\mathcal{B}_{e} \subset \mathcal{B}$ with $|\mathcal{B}_{e}| \geq \delta^{\epsilon}|\mathcal{B}|$ with the property
\begin{equation}\label{form36} \mathrm{Leb}(\pi_{e}(\cup \mathcal{B}_{e})) \leq \delta^{3 - s}. \end{equation}
We aim for a contradiction if $\epsilon,\delta$ are sufficiently small. We fix an auxiliary parameter $0 < \eta < (t - s)/2$. Then, we apply Corollary \ref{cor1} to find the constant $\epsilon_{0} > 0$ which depends only on $\eta$. Finally, we will assume, presently, that $\epsilon < \epsilon_{0}/2$, and $\eta + 3\epsilon < t - s$.

Let $\mu$ be the uniformly distributed probability measure on $\cup \mathcal{B}$; in particular $\mu$ is a $\delta$-measure (with absolute constant), and $I_{t}^{\delta}(\mu) \lessapprox \delta^{-\epsilon}$. Apply Proposition \ref{prop:aug} to find a set $H \subset B_{\He}(1)$ of cardinality $|H| \leq \delta^{t - 3}$ such that $I^{\delta}_{3}(\tau \ast_{\He} \mu) \lessapprox \delta^{-\epsilon}$, where $\tau$ is the uniformly distributed probability measure on $H$. Write $\nu := \tau \ast_{\He} \mu$, so $\nu$ is a $\delta$-probability measure by Lemma \ref{lemma1}. Since $\epsilon < \epsilon_{0}/2$ and $I_{3}^{\delta}(\nu) \lessapprox \delta^{-\epsilon}$, it follows form Corollary \ref{cor1} that there exists a set $G \subset \He$ of measure $\nu(G) \geq 1 - \delta^{\epsilon_{0}}$ such that
\begin{equation}\label{form30} \frac{1}{\mathcal{H}^{1}(S)} \int_{S} \|\pi_{e}(\nu|_{G})\|_{L^{2}}^{2} \, d\mathcal{H}^{1}(e) \leq \frac{1}{\mathcal{H}^{1}(S)} \int_{S^{1}} \|\pi_{e}(\nu|_{G})\|_{L^{2}}^{2} \, d\mathcal{H}^{1}(e) \leq \delta^{-\eta - \epsilon}. \end{equation}
Finally, write $B_{e} := H \cdot (\cup \mathcal{B}_{e})$ for all $e \in S^{1}$, and note that $\nu(B_{e}) \geq \delta^{\epsilon}$ for all $e \in S$ (this is a consequence of the general inequality $(\mu_{1} \ast_{\He} \mu_{2})(A \cdot B) \geq (\mu_{1} \times \mu_{2})(A \times B)$). Consequently, also $\nu(G \cap B_{e}) \geq \nu(G) + \nu(B_{e}) - 1 \geq \delta^{\epsilon} - \delta^{\epsilon_{0}} \geq \delta^{\epsilon}/2$, using $\epsilon < \epsilon_{0}/2$. Therefore,
\begin{displaymath} \delta^{2\epsilon}/4 \leq \|\pi_{e}(\nu|_{G \cap B_{e}})\|_{L^{1}}^{2} \leq \mathrm{Leb}(\pi_{e}(B_{e})) \cdot \|\pi_{e}(\nu|_{G})\|_{L^{2}}^{2}, \quad e \in S^{1}, \end{displaymath}
using Cauchy-Schwarz, and it follows from \eqref{form30} that $\mathrm{Leb}(\pi_{e}(B_{e})) \gtrsim \delta^{\eta + 3\epsilon}$ for at least one vector $e \in S$. On the other hand, note that $B_{e} = H \cdot (\cup \mathcal{B}_{e})$ is a union of $\leq \delta^{t - 3}$ left translates of $\cup \mathcal{B}_{e}$, and recall from \eqref{projLeb} that
\begin{displaymath} \mathrm{Leb}(\pi_{e}(p \cdot B)) = \mathrm{Leb}(\pi_{e}(B)), \qquad p \in \He, \, B \subset \He. \end{displaymath}
Therefore, we have the upper bound
\begin{displaymath} \mathrm{Leb}(\pi_{e}(B_{e})) = \mathrm{Leb}(\pi_{e}(H \cdot (\cup \mathcal{B}_{e}))) \stackrel{\eqref{form36}}{\leq} \delta^{t - 3} \cdot \delta^{3 - s} = \delta^{t - s}, \qquad e \in S^{1}. \end{displaymath}
Since $\eta + 3\epsilon < t - s$ by assumption, the previous lower and upper bounds for $\mathrm{Leb}(\pi_{e}(B_{e}))$ are not compatible for $\delta > 0$ small enough. A contradiction has been reached. \end{proof}

\section{Kakeya estimate of Guth, Wang, and Zhang}\label{s:GWZ}

The purpose of this section is to prove Theorem
\ref{mainTechnical}. This will be based on the duality between
horizontal lines and light rays developed in Section
\ref{s:duality}, and an application of a \emph{(reverse) square
function inequality for the cone}, due to Guth, Wang, and Zhang
\cite{MR4151084}. To be precise, we will not need the full power
of this "oscillatory" statement, but rather only a \emph{Kakeya
inequality for plates} in \cite[Lemma 1.4]{MR4151084}. To
introduce the statement, we need to recap some of the terminology
and notation in \cite{MR4151084}. This discussion follows
\cite[Section 1]{MR4151084}, but we prefer a different scaling:
more precisely, in our discussion the geometric objects (plates
and rectangles) of \cite{MR4151084} are dilated by "$R$" on the
frequency side and (consequently) by $R^{-1}$ on the spatial side.

Fix $R \geq 1$, and let
\begin{equation}\label{form10} \Gamma := \Gamma_{R} := \mathcal{C} \cap \{R/2 \leq |\xi| \leq R\}. \end{equation}
Let $\Gamma(1)$ be the $1$-neighbourhood of $\Gamma$, and let $\Theta := \Theta_{R}$ be a finitely overlapping cover of $\Gamma(1)$ by rectangles of dimensions $R \times R^{1/2} \times 1$, whose longest side is parallel to a light ray. The statements in \cite{MR4151084} are not affected by the particular construction of $\Theta$, but in our application, the relevant rectangles are translates of dual rectangles of the $\delta$-plates in Definition \ref{def:plate}, with $\delta = R^{-1/2}$. Indeed, $\delta$-plates are rectangles of dimensions $\sim \delta^{2} \times \delta \times 1$ tangent to $\mathcal{C}$, so their dual rectangles are plates of dimensions $\sim R \times R^{1/2} \times 1$, also tangent to $\mathcal{C}$ (this is because $\mathcal{C}$ has opening angle $\pi/2$, see Figure \ref{fig3}). For concreteness, we will use translated duals of $R^{-1/2}$-plates (as in Definition \ref{def:plate}) to form the collection $\Theta$.

For each $\theta \in \Theta$, let $f_{\theta} \in L^{2}(\R^{3})$ be a function with $\spt \hat{f}_{\theta} \subset \theta$, and consider the square function
\begin{displaymath} Sf := \Big( \sum_{\theta \in \Theta} |f_{\theta}|^{2} \Big)^{1/2}. \end{displaymath}
Then, \cite[Lemma 1.4]{MR4151084} contains an inequality of the following form:
\begin{equation}\label{GWZ} \int_{\R^{3}} |Sf|^{4} \lesssim \sum_{R^{-1/2} \leq s \leq 1} \sum_{d(\tau) = s} \sum_{U \| U_{\tau}} \mathrm{Leb}(U)^{-1}\|S_{U}f\|_{L^{2}}^{4}. \end{equation}
To understand the meaning of the "partial" square functions $S_{U}$ we need to introduce more terminology from \cite{MR4151084}. Fix a dyadic number $s \in [R^{-1/2},1]$ (an "angular" parameter), and write $R' := s^{2}R \in [1,R]$. The $1$-neighbourhood of the truncated cone $\Gamma_{R'} = \mathcal{C} \cap \{|\xi| \sim R'\}$ can be covered by a finitely overlapping family $\Theta_{R'}$ of rectangles of dimensions
\begin{displaymath} R' \times (R')^{1/2} \times 1 = s^{2}R \times sR^{1/2} \times 1. \end{displaymath}
(Here $\Theta_{R}$ agrees with $\Theta$, as defined above.) Consequently, the $(R')^{-1}$-neighbourhood of $\Gamma_{R}$ is covered by the rescaled rectangles
\begin{displaymath} \mathcal{T}_{s} := \{s^{-2}\theta : \theta \in \Theta_{R'}\} \end{displaymath}
of dimensions $R \times s^{-1}R^{1/2} \times s^{-2}$. Note that the family $\mathcal{T}_{1}$ coincides with $\Theta_{R}$ (at least if it is defined appropriately), whereas $\mathcal{T}_{R^{-1/2}}$ consists of $\sim 1$ balls of radius $R$. For every $s \in [R^{-1/2},1]$, the rectangles in $\mathcal{T}_{s}$ are at least as large as those in $\Theta_{R}$, so we may assume that every $\theta \in \Theta_{R}$ is contained in at least one rectangle $\tau \in \mathcal{T}_{s}$. 

For $\theta \in \Theta_{R}$ and $\tau \in \mathcal{T}_{s}$, let $\theta^{\ast}$ and $\tau^{\ast}$ be the dual rectangles of $\theta$ and $\tau$ (here the word "dual" refers to the common notion in Euclidean Fourier analysis, and not the duality in the sense of Proposition \ref{prop1}). Then both $\theta^{\ast}$ and $\tau^{\ast}$ are rectangles centred at the origin, with dimensions
\begin{displaymath} R^{-1} \times R^{-1/2} \times 1 \quad \text{and} \quad R^{-1} \times sR^{-1/2} \times s^{2}, \end{displaymath}
respectively. The longest sides of both $\theta^{\ast}$ and $\tau^{\ast}$ remain parallel to a light ray on $\mathcal{C}$: this is again the convenient property of the "standard" cone $\mathcal{C}$ with opening angle $\pi/2$, see Figure \ref{fig3}. Of course, $\theta^{\ast}$ is an $R^{-1/2}$-plate in the sense of Definition \ref{def:plate}, since the elements $\theta \in \Theta$ were defined as (translates of) duals or $R^{-1/2}$-plates.

\begin{figure}[h!]
\begin{center}
\begin{overpic}[scale = 0.7]{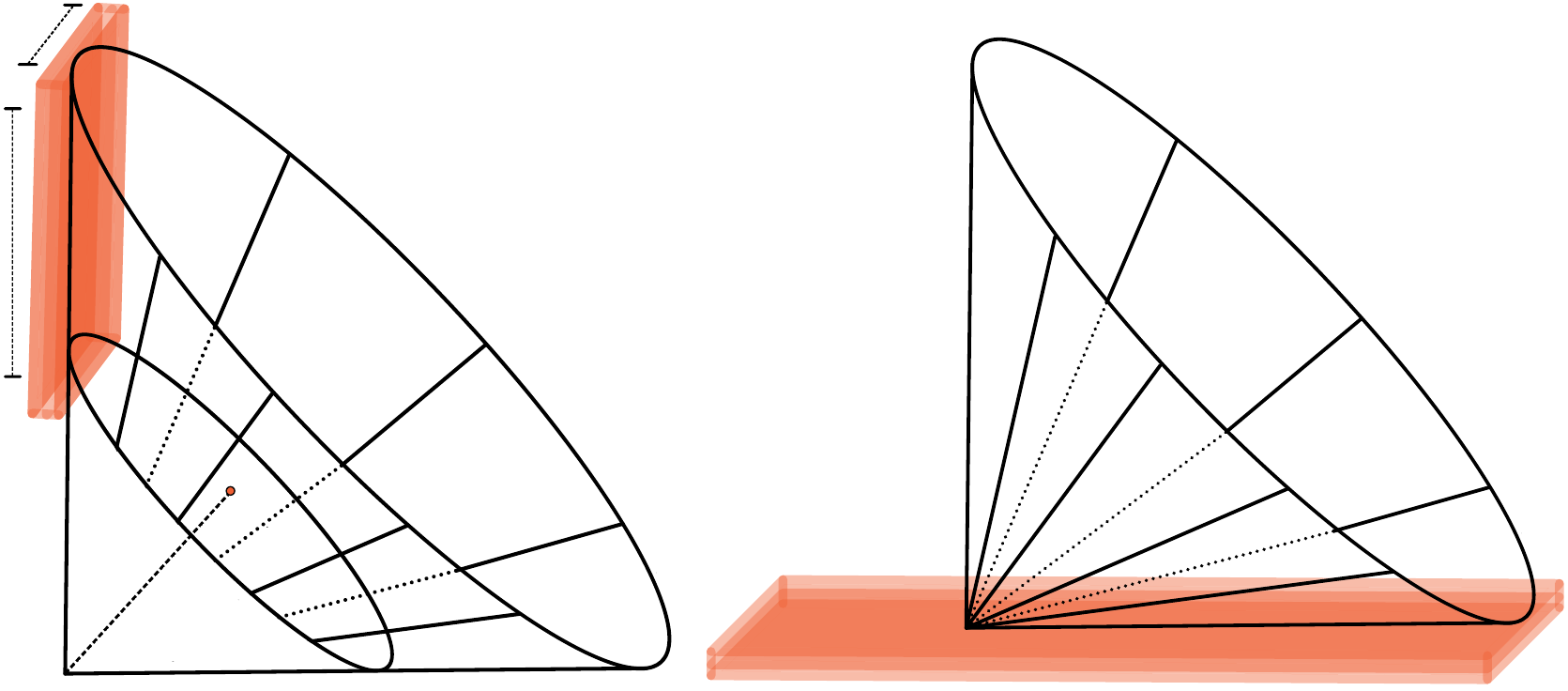}
\put(5.5,37.5){$\theta$}
\put(-7,27){$\sim R$}
\put(-7,41){$\sim R^{1/2}$}
\put(6.5,45){$1$}
\put(6,8){$\tfrac{R}{2}$}
\put(50,3){$\theta^{\ast}$}
\put(70,-2){$1$}
\put(99,2){$\sim R^{-1/2}$}
\end{overpic}
\caption{On the left: the truncated cone $\Gamma$ and one of the plates $\theta$. On the right: the cone $\mathcal{C}$ and the dual plate $\theta^{\ast}$.}\label{fig3}
\end{center}
\end{figure}

The set $\tau^{\ast}$ turns out to be (essentially) a dilate of an $(s^{2}R)^{-1/2}$-plate. For every $\tau \in \mathcal{T}_{s}$, consider $U_{\tau} := s^{-2}\tau^{\ast}$, which is a rectangle of dimensions
\begin{displaymath} s^{-2}R^{-1} \times s^{-1}R^{-1/2} \times 1 = (s^{2}R)^{-1} \times (s^{2}R)^{-1/2} \times 1. \end{displaymath}
In particular, $U_{\tau}$ is an $(s^{2}R)^{-1/2}$-plate, and hence larger than (or at least as large as) $\theta^{\ast}$: if $\theta \subset \tau$, then every translate of $\theta^{\ast}$ is contained in some translate of $10U_{\tau}$. We let $\mathcal{U}_{\tau}$ be a tiling of $\R^{3}$ by rectangles parallel to $U_{\tau}$. Now we may finally define the "partial" square function $S_{U}f$:
\begin{equation}\label{form12} S_{U}f := \Big(\sum_{\theta \subset \tau} |f_{\theta}|^{2} \Big)^{1/2} \cdot \mathbf{1}_{U}, \qquad U \in \mathcal{U}_{\tau}. \end{equation}
We have now explained the meaning of \eqref{GWZ}, except the sum over "$d(\tau) = s$". In our notation, this means the same as summing over $\tau \in \mathcal{T}_{s}$.

We are then prepared to prove Theorem \ref{mainTechnical}.

\begin{proof}[Proof of Theorem \ref{mainTechnical}] Let $\delta \in (0,\tfrac{1}{2}]$, and let $\mathcal{B}$ be a $(\delta,3,\delta^{-\epsilon})$-set of $\delta$-balls with $\delta$-separated centres. In the statement of Theorem \ref{mainTechnical}, it was assumed that $\cup \mathcal{B} \subset B_{\He}(1)$, but for slight technical convenience we strengthen this (with no loss of generality) to $\cup \mathcal{B} \subset B_{\He}(c)$ for a small absolute constant $c > 0$. As in the statement of Theorem \ref{mainTechnical}, let $\mu$ be the measure on $\He$ with density
\begin{displaymath} f := (\delta^{4}|\mathcal{B}|)^{-1} \sum_{B \in \mathcal{B}} \mathbf{1}_{B}. \end{displaymath}
Following the discussion Section \ref{s:measures}, and in particular recalling equation \eqref{form22}, Theorem \ref{mainTechnical} will be proven if we manage to establish that
\begin{equation}\label{form23} \int_{\mathcal{L}_{\angle}} Xf(\ell)^{2} \, d\mathfrak{m}(\ell) \leq \delta^{-\eta}, \end{equation}
assuming that $\epsilon,\delta > 0$ are small enough, depending on $\eta$. Recall that $\mathcal{L}_{\angle} = \ell(\{(a,b,c) : |a| \leq 1\})$. To estimate the quantity in \eqref{form23}, notice first that
\begin{equation}\label{form24} Xf(\ell) = \int_{\ell} f \, d\mathcal{H}^{1} \lesssim (\delta^{3}|\mathcal{B}|)^{-1} \cdot |\{B \in \mathcal{B} : \ell \cap B \neq \emptyset\}|, \qquad \ell \in \mathcal{L}_{\angle}, \end{equation}
because $\mathcal{H}^{1}(B \cap \ell) \lesssim \delta$ for all $B \in \mathcal{B}$. Write $N(\ell) := |\{B \in \mathcal{B} : \ell \cap B \neq \emptyset\}|$. Then, as we just saw,
\begin{align*} \int_{\mathcal{L}_{\angle}} Xf(\ell)^{2} \, d\mathfrak{m}(\ell) & \lesssim (\delta^{3}|\mathcal{B}|)^{-2} \int_{\mathcal{L}_{\angle}} N(\ell)^{2} \, d\mathfrak{m}(\ell)\\
& \leq (\delta^{3}|\mathcal{B}|)^{-2} \int_{B(2)} N(\ell(p))^{2} \, d\mathrm{Leb}(p). \end{align*}
The second inequality is based on (a) the definition of the measure $\mathfrak{m} = \ell_{\sharp}\mathrm{Leb}$, and (b) the observation that if $\ell(p) \in \mathcal{L}_{\angle}$ and $N(\ell(p)) \neq 0$, then $\ell(p) \cap B_{\He}(c) \neq \emptyset$, and this forces $p \in B(2)$ (if $c > 0$ was taken small enough). Finally, by Lemma \ref{l:IncidenceDuality}, we have
\begin{displaymath} N(\ell(p)) \leq |\{B \in \mathcal{B} : p \in \ell^{\ast}(B)\}| = \sum_{B \in \mathcal{B}} \mathbf{1}_{\ell^{\ast}(B)}(p). \end{displaymath}
Indeed, whenever $\ell(p) \cap B \neq \emptyset$ for some $B \in \mathcal{B}$, there exists a point $q \in \ell(p) \cap B$, and then Lemma \ref{l:IncidenceDuality} implies that $p \in \ell^{\ast}(q) \subset \ell^{\ast}(B)$. Therefore, combining \eqref{form23}-\eqref{form24}, it will suffice to show that for $\eta > 0$ fixed, the inequality
\begin{equation}\label{form25} \int_{B(2)} \Big( \sum_{B \in \mathcal{B}} \mathbf{1}_{\ell^{\ast}(B)} \Big)^{2} \leq \delta^{-\eta} \cdot (\delta^{3}|\mathcal{B}|)^{2} \end{equation}
holds assuming that we have picked $\epsilon > 0$ (in the $(\delta,3,\delta^{-\epsilon})$-set hypothesis for $\mathcal{B}$) sufficiently small, depending on $\eta$. We formulate a slightly more general version of this inequality in Proposition \ref{prop5} below, and then explain in the remark afterwards why \eqref{form25} is a consequence. This completes the proof of Theorem \ref{mainTechnical}. \end{proof}

\begin{proposition}\label{prop5} For every $\epsilon > 0$, there exists $\delta_{0} > 0$ such that the following holds for all $\delta \in (0,\delta_{0}]$. Let $\mathcal{B}$ be a family of $\delta$-balls contained in $B_{\He}(1)$ with $\delta$-separated centres, and satisfying the following non-concentration condition for some $\mathbf{C} > 0$:
\begin{equation}\label{form38a} |\{B \in \mathcal{B} : B \subset B_{\He}(p,r)\}| \leq \mathbf{C} \cdot \left(\frac{r}{\delta} \right)^{3}, \qquad p \in \He, \, r \geq \delta.  \end{equation}
Then,
\begin{equation}\label{form39a} \int_{B(2)} \Big( \sum_{B \in \mathcal{B}} \mathbf{1}_{\ell^{\ast}(B)} \Big)^{2} \leq \mathbf{C} \cdot \delta^{3 - \epsilon}|\mathcal{B}|. \end{equation}
\end{proposition}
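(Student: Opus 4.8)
The plan is to deduce \eqref{form39a} from the Guth--Wang--Zhang Kakeya inequality \eqref{GWZ}, applied at scale $R := \delta^{-2}$ to a random family of wave packets whose Fourier supports are the frequency plates $\Theta_{R}$ and whose spatial profiles reconstruct the sets $\ell^{\ast}(B)\cap B(2)$. As in the proof of Theorem \ref{mainTechnical}, after a harmless Heisenberg rescaling we may assume $\cup\mathcal{B}\subset B_{\He}(c)$ for a small absolute constant $c<\tfrac{1}{10}$; write $B=B_{\He}(p_{B},\delta)$ for $B\in\mathcal{B}$. By Proposition \ref{prop1} and Remark \ref{rem1}, for each $B$ the set $\ell^{\ast}(B)\cap B(2)$ is contained in a translate $P_{B}$ of some $C\delta$-plate $\mathcal{P}_{C\delta}(y_{B})$ (Definition \ref{def:plate}), with $y_{B}$ the $y$-coordinate of the centre of $B$ up to $O(\delta)$, and moreover $P_{B}\subset\ell^{\ast}(B_{\He}(p_{B},C'\delta))$. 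Since $\mathbf{1}_{\ell^{\ast}(B)}\le\mathbf{1}_{P_{B}}$ on $B(2)$, it suffices to prove $\int_{B(2)}(\sum_{B}\mathbf{1}_{P_{B}})^{2}\lesssim\mathbf{C}\,\delta^{3-\epsilon}|\mathcal{B}|$. Choosing $R$ so that the $R^{-1/2}$-plates $\theta^{\ast}$, $\theta\in\Theta=\Theta_{R}$, are exactly $C\delta$-plates, and rounding directions to the $\Theta$-grid, partition $\mathcal{B}=\bigsqcup_{\theta}\mathcal{B}_{\theta}$ so that each $P_{B}$ with $B\in\mathcal{B}_{\theta}$ is a translate of $\theta^{\ast}$. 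For each $B$ pick a Schwartz wave packet $f_{B}$ with $\spt\hat{f}_{B}\subset\theta(B)$, $|f_{B}|\sim 1$ on $P_{B}$ and rapid decay off it, attach independent random signs $\epsilon_{B}$, and set $f_{\theta}:=\sum_{B\in\mathcal{B}_{\theta}}\epsilon_{B}f_{B}$ (so $\spt\hat{f}_{\theta}\subset\theta$) and $Sf:=(\sum_{\theta}|f_{\theta}|^{2})^{1/2}$.

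Two geometric facts, both consequences of the duality of Section \ref{s:duality}, drive the estimate. \emph{(i) Bounded overlap within a direction.} If $B_{1},B_{2}\in\mathcal{B}_{\theta}$ and $P_{B_{1}}\cap P_{B_{2}}\cap B(2)\neq\emptyset$, then $|y_{B_{1}}-y_{B_{2}}|\lesssim\delta$ and $P_{B_{i}}\subset\ell^{\ast}(B_{\He}(p_{B_{i}},C'\delta))$, so Lemma \ref{lemma2} gives $d_{\He}(p_{B_{1}},p_{B_{2}})\lesssim\delta$; hence the plates of $\mathcal{B}_{\theta}$ through a common point have $\delta$-separated centres in a ball of radius $O(\delta)$, so there are $O(1)$ of them, and (up to the rapidly decaying tails) $|f_{\theta}|^{2}\lesssim\sum_{B\in\mathcal{B}_{\theta}}|f_{B}|^{2}$ pointwise, for every choice of signs. \emph{(ii) Non-concentration of plates meeting a coarse plate.} Fix dyadic $s\in[\delta,1]$, put $r':=s^{-1}\delta$, let $\tau\in\mathcal{T}_{s}$ and $U\|U_{\tau}$ a translated $r'$-plate. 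If $\theta\subset\tau$ and $P_{B}\cap U\neq\emptyset$, then (as $\delta\le r'$ and $|y_{B}-y_{\tau}|\lesssim r'$) the plate $P_{B}$ lies in a fixed $O(1)$-enlargement $\tilde U$ of $U$, itself a translated $O(r')$-plate, so $\tilde U\subset\ell^{\ast}(B_{\He}(q,Cr'))$ for a point $q=q(\tau,U)$; since $\ell^{\ast}(p_{B})\cap B(2)\subset P_{B}\subset\ell^{\ast}(B_{\He}(q,Cr'))$ and $\|p_{B}\|\le c$, Proposition \ref{prop2} forces $p_{B}\in B_{\He}(q,C'r')$, hence $B\subset B_{\He}(q,C''r')$. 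By the non-concentration hypothesis \eqref{form38a},
\begin{equation}\label{planCount} |\{B\in\mathcal{B}:\theta(B)\subset\tau,\ P_{B}\cap U\neq\emptyset\}|\le\mathbf{C}\,(C''r'/\delta)^{3}\lesssim\mathbf{C}\,s^{-3}. \end{equation}

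Now insert these into \eqref{GWZ}. On the left, $|f_{B}|\gtrsim\mathbf{1}_{P_{B}}$ and Jensen's inequality in the signs give $\E\int_{\R^{3}}|Sf|^{4}\ge\int(\sum_{\theta}\E|f_{\theta}|^{2})^{2}=\int(\sum_{B}|f_{B}|^{2})^{2}\gtrsim\int_{B(2)}(\sum_{B}\mathbf{1}_{P_{B}})^{2}$. On the right, using (i), $|f_{B}|\lesssim\mathbf{1}_{P_{B}}$ (plus negligible tails), $\mathrm{Leb}(P_{B})\sim\delta^{3}$, \eqref{planCount}, and $\mathrm{Leb}(U_{\tau})\sim(r')^{3}=s^{-3}\delta^{3}$, we get for every $U\|U_{\tau}$ and every choice of signs
\begin{displaymath} \|S_{U}f\|_{L^{2}}^{2}=\int_{U}\sum_{\theta\subset\tau}|f_{\theta}|^{2}\lesssim\sum_{\substack{B:\ \theta(B)\subset\tau\\ P_{B}\cap U\neq\emptyset}}\mathrm{Leb}(P_{B}\cap U)\lesssim\mathbf{C}\,s^{-3}\delta^{3}\sim\mathbf{C}\,\mathrm{Leb}(U). \end{displaymath}
Using this in one of the two factors $\|S_{U}f\|_{L^{2}}^{2}$ on the right-hand side of \eqref{GWZ}, then summing the remaining factor over the (finitely overlapping) tiling $\mathcal{U}_{\tau}$, over $\tau\in\mathcal{T}_{s}$ (each $\theta$ lying in $O(1)$ of these), and over the $O(\log(1/\delta))$ dyadic scales $s$, yields
\begin{displaymath} \sum_{s}\sum_{\tau}\sum_{U\|U_{\tau}}\mathrm{Leb}(U)^{-1}\|S_{U}f\|_{L^{2}}^{4}\lesssim\mathbf{C}\sum_{s}\sum_{\tau}\sum_{U\|U_{\tau}}\|S_{U}f\|_{L^{2}}^{2}\lesssim\mathbf{C}\log(1/\delta)\sum_{\theta}\|f_{\theta}\|_{L^{2}}^{2}\lesssim\mathbf{C}\log(1/\delta)\,|\mathcal{B}|\,\delta^{3}, \end{displaymath}
the last step using (i) once more and $\sum_{\theta}\sum_{B\in\mathcal{B}_{\theta}}\mathrm{Leb}(P_{B})\sim|\mathcal{B}|\delta^{3}$. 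Combining the two displays with \eqref{GWZ} gives $\int_{B(2)}(\sum_{B}\mathbf{1}_{P_{B}})^{2}\lesssim\mathbf{C}\log(1/\delta)|\mathcal{B}|\delta^{3}$, which is $\le\mathbf{C}\,\delta^{3-\epsilon}|\mathcal{B}|$ once $\delta$ is small enough, depending on $\epsilon$, that the absolute constants and $\log(1/\delta)$ are absorbed into $\delta^{-\epsilon}$. This proves \eqref{form39a}.

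The main obstacle is the geometric bookkeeping behind \eqref{planCount}: one must verify, using the explicit plate parametrisation of Definition \ref{def:plate} and Propositions \ref{prop1}--\ref{prop2} with Remark \ref{rem1}, that the Euclidean event ``$P_{B}$ has direction in the arc of $\tau$ and meets the translate $U$ of $U_{\tau}$'' corresponds under $\ell^{\ast}$ to the Heisenberg event ``$B$ is contained in a fixed ball of radius comparable to the width scale $r'=s^{-1}\delta$ of $U_{\tau}$'' — precisely the scale at which the $d_{\He}$-non-concentration condition \eqref{form38a} is designed to be used. Everything else (the wave-packet construction, the Jensen/Khintchine averaging, and the elementary overlap and tiling reductions in \eqref{GWZ}) is routine.
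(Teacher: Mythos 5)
Your strategy matches the paper's: pass from $\ell^{\ast}(B)$ to plates $P_{B}$ via Proposition \ref{prop1}, build a frequency-localised wave packet model, invoke the Guth--Wang--Zhang Kakeya inequality \eqref{GWZ} at $R=\delta^{-2}$, and convert the ``plates in a coarse plate $U$'' count back into ``balls in a Heisenberg ball $B_{U}$'' via Proposition \ref{prop2}, where the non-concentration hypothesis \eqref{form38a} applies at scale $r'=s^{-1}\delta$. The two places you deviate are genuine simplifications in bookkeeping but not in idea. First, you use random signs $\epsilon_{B}$ and Khintchine/Jensen where the paper uses modulations $e_{\theta}$ and non-negativity of $\varphi_{\mathcal{P}}$; these are interchangeable, and the lower bound $\E\int |Sf|^{4}\gtrsim\int(\sum_{B}\mathbf{1}_{P_{B}})^{2}$ is correct. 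Second, instead of sorting the tiles $U$ by their richness $\rho(U)$ and pigeonholing, you bound $\|S_{U}f\|_{L^{2}}^{2}\lesssim\mathbf{C}\,\mathrm{Leb}(U)$ directly from \eqref{planCount} and then write $\mathrm{Leb}(U)^{-1}\|S_{U}f\|_{L^{2}}^{4}\lesssim\mathbf{C}\|S_{U}f\|_{L^{2}}^{2}$, summing the remaining factor via the tiling identity $\sum_{U\|U_{\tau}}\|S_{U}f\|_{L^{2}}^{2}=\sum_{\theta\subset\tau}\|f_{\theta}\|_{L^{2}}^{2}$. This gives the same final bound (up to $\delta^{-O(\epsilon)}$ factors) and avoids the pigeonhole; the paper's richness decomposition is also unnecessary for the same reason, so your streamlining is legitimate.

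The one place where you wave your hands and the paper does real work is the treatment of Schwartz tails in the estimate for $\|S_{U}f\|_{L^{2}}^{2}$. Your display
\begin{displaymath} \int_{U}\sum_{\theta\subset\tau}|f_{\theta}|^{2}\lesssim\sum_{\substack{B:\ \theta(B)\subset\tau\\ P_{B}\cap U\neq\emptyset}}\mathrm{Leb}(P_{B}\cap U) \end{displaymath}
is not literally true: a plate $P_{B}$ that sits right next to $U$ but does not intersect it still contributes a tail of size $\sim 1$ on part of $U$, because Schwartz decay only helps once you move a fixed dilate of $P_{B}$ away. This is exactly why the paper introduces the $R^{\epsilon}$-enlargement $\mathbf{U}$ of $U$, counts the plates $\mathcal{P}\subset\mathbf{U}$ (paying a factor $R^{O(\epsilon)}$ in \eqref{planCount} because $r'$ becomes $R^{\epsilon}r'$), and observes that a plate $\mathcal{P}\not\subset\mathbf{U}$, being a subset of some translate of $10U_{\tau}$, must satisfy $R^{\epsilon/2}\mathcal{P}\cap U=\emptyset$, at which point rapid decay really does give a negligible contribution. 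If you carry your argument out carefully with $\mathbf{U}$ in place of $U$, the $R^{O(\epsilon)}=\delta^{-O(\epsilon)}$ factor reappears and is absorbed into $\delta^{-\epsilon}$; so the end result is the same, but ``plus negligible tails'' as you wrote it hides the one step that actually requires the $\epsilon$-loss.

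Two small points worth noting for the clean write-up: Lemma \ref{lemma2} and Proposition \ref{prop2} are stated with $B(1)$ while your plates are restricted to $B(2)$; the constants in those statements can be adjusted (as Remark \ref{rem1} already works for $|s|\leq 2$), but it should be said. Also, in verifying that $P_{B}\subset\tilde{U}$ once $P_{B}\cap U\neq\emptyset$ and $\theta(B)\subset\tau$, you implicitly use that over a unit length along the light ray, a $\lesssim r'$ angular deviation produces $\lesssim r'$ lateral drift and $\lesssim (r')^{2}$ drift in the thin direction -- this is the quadratic tangency to $\mathcal{C}$, and is exactly why the enlargement can be taken $O(1)$ rather than $O(r'/\delta)$; spelling this out would make \eqref{planCount} airtight.
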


\begin{remark} Why is \eqref{form25} a consequence of \eqref{form39a}? In \eqref{form25}, we assumed that $\mathcal{B}$ is a $(\delta,3,\delta^{-\epsilon})$-set. This implies
\begin{displaymath} |\{B \in \mathcal{B} : B \subset B_{\He}(p,r)\}| \lesssim \delta^{-\epsilon} \cdot r^{3}|\mathcal{B}|, \qquad p \in \He, \, r \geq \delta. \end{displaymath}
Therefore, \eqref{form38a} is satisfied with constant $\mathbf{C} \sim \delta^{3 - \epsilon}|\mathcal{B}|$. Hence \eqref{form39a} implies \eqref{form25} if we choose $\epsilon < \eta/2$ and then $\delta > 0$ sufficiently small. 

We chose to formulate Proposition \ref{prop5} separately because the "meaning" of \eqref{form39a} is easier to appreciate than that of \eqref{form25}: namely, if all the sets $\ell^{\ast}(B)$ had a disjoint intersection inside $B(1)$, then the left hand side of \eqref{form39a} would be roughly $\delta^{3}|\mathcal{B}|$. Thus, \eqref{form39a} tells us that under the non-concentration condition \eqref{form38a}, the sets $\ell^{\ast}(B)$ are nearly disjoint inside $B(1)$, at least at the level of $L^{2}$-norms. \end{remark}

\begin{proof}[Proof of Proposition \ref{prop5}] By the discussion in Section \ref{s:ballPlate}, the intersections $\ell^{\ast}(B) \cap B(2)$ are essentially $\delta$-plates -- rectangles of dimensions $1 \times \delta \times \delta^{2}$ tangent to $\mathcal{C}$. More precisely, for every $B \in \mathcal{B}$, let $\mathcal{P}_{B} \subset \R^{3}$ be a $C\delta$-plate (as in Definition \ref{def:plate}) with the property
\begin{displaymath} \ell^{\ast}(B) \cap B(2) \subset \mathcal{P}_{B}. \end{displaymath}
This is possible by first applying Proposition \ref{prop1} (which yields a modified $2\delta$-plate containing $\ell^{\ast}(B)$), and then the first inclusion in \eqref{form9}, which shows that the intersection of the modified $2\delta$-plate with $B(2)$ is contained in a $C\delta$-plate $\mathcal{P}_{B}$. Now, we will prove \eqref{form39a} by establishing that
\begin{equation}\label{form26} \int \Big( \sum_{B \in \mathcal{B}} \mathbf{1}_{\mathcal{P}_{B}} \Big)^{2} \leq \mathbf{C} \cdot \delta^{3 - \epsilon}|\mathcal{B}|. \end{equation}
Every plate $\mathcal{P}_{B}$ has a \emph{direction}, denoted $\theta(\mathcal{P}_{B})$: this is the direction of the longest axis of $\mathcal{P}_{B}$, or more formally the real number "$y \in [-1,1]$" associated to the line "$L_{y}$" in Definitions \ref{def:plate}. By enlarging the plates $\mathcal{P}_{B}$ slightly (if necessary), we may assume that their directions lie in the set $\Theta := (\delta \Z) \cap [-1,1]$: this is because if two plates coincide in all other parameters, and differ in direction by $\leq \delta$, both are contained in constant enlargements of the other (this is not hard to check). The reason why we may restrict attention to $[-1,1]$ is that all the plates $\mathcal{P}_{B}$ were associated to the balls $B \subset B_{\He}(1)$, and in fact the $y$-coordinate of the centre of $B$ determines the direction of $\mathcal{P}_{B}$ (see \eqref{form28}).

We next sort the family $\{\mathcal{P}_{B}\}_{B \in \mathcal{B}}$ according to their directions:
\begin{displaymath} \{\mathcal{P}_{B} : B \in \mathcal{B}\} =: \bigcup_{\theta \in \Theta} \mathcal{P}(\theta), \end{displaymath}
where $\mathcal{P}(\theta) := \{\mathcal{P}_{B} : \theta(\mathcal{P}_{B}) = \theta\}$. Thus, for $\theta \in \Theta$ fixed, the plates in $\mathcal{P}(\theta)$ are all translates of each other. Also, the plates in $\mathcal{P}(\theta)$ for a fixed $\theta$ have bounded overlap: this follows from the assumption that the balls in $\mathcal{B}$ have $\delta$-separated centres, and uses Lemma \ref{lemma2} (the plates with a fixed direction correspond precisely to Heisenberg balls whose $y$-coordinates are, all, within "$\delta$" of each other).

Write $R := \delta^{-2}$, thus $\delta = R^{-1/2}$, and recall the truncated cone $\Gamma = \Gamma_{R}$ from \eqref{form10}. Since the plates $\mathcal{P} \in \mathcal{P}(\theta)$ are translates of each other, they all have a common dual rectangle $\mathcal{P}^{\ast}_{\theta}$ of dimensions $\sim R \times R^{1/2} \times 1$. The rectangle $\mathcal{P}^{\ast}_{\theta}$ is centred at $0$, but we may translate it by $\sim R$ in the direction of its longest $R$-side (a light ray depending on $\theta$) so that the translate lies in the $O(1)$-neighbourhood of $\Gamma_{R}$. Committing a serious abuse of notation, we will denote this translated dual rectangle again by "$\theta$", and the collection of all these sets is denoted $\Theta$. This notation coincides with the discussion below \eqref{form10}. There is a $1$-to-$1$ correspondence between the directions $\theta \in \Theta = \delta \Z \cap [-1,1]$ and the rectangles $\theta \in \Theta$ defined just above, so the notational inconsistency should not cause confusion.

We gradually move towards applying the inequality \eqref{GWZ} of Guth, Wang, and Zhang. The next task is to define the functions $f_{\theta}$ and $f = \sum_{\theta \in \Theta} f_{\theta}$. Fix $\theta \in \Theta$, $\mathcal{P} \in \mathcal{P}(\theta)$, and let $\varphi_{\mathcal{P}} \in \mathcal{S}(\R^{3})$ be a non-negative Schwartz function with the properties
\begin{enumerate}
\item $\mathbf{1}_{\mathcal{P}} \leq \varphi_{\mathcal{P}} \lesssim 1$,
\item $\varphi_{\mathcal{P}}$ has rapid decay outside $\mathcal{P}$,
\item $\widehat{\varphi}_{\mathcal{P}} \subset \mathcal{P}^{\ast}_{\theta}$.
\end{enumerate}
Here "rapid decay outside $\mathcal{P}$ has" the usual meaning: if $\lambda \mathcal{P}$ denotes a $\lambda$-times dilated, concentric, version of $\mathcal{P}$, then $\varphi(x) \lesssim_{N} \lambda^{-N}$ for all $x \in \R^{3} \, \setminus \, \lambda \mathcal{P}$ (and for any $N \in \N$). Then, define the function
\begin{displaymath} f_{\theta} := \sum_{\mathcal{P} \in \mathcal{P}(\theta)} e_{\theta} \cdot \varphi_{\mathcal{P}}. \end{displaymath}
Here $e_{\theta}$ is a modulation, depending only on $\theta$, such that
\begin{displaymath} \widehat{e_{\theta} \cdot \varphi_{\mathcal{P}}} \subset \theta. \end{displaymath}
Now the function $f = \sum_{\theta \in \Theta} f_{\theta}$ satisfies all the assumptions of the inequality \eqref{GWZ}, so
\begin{align} \int_{\R^{3}} \Big( \sum_{B \in \mathcal{B}} \mathbf{1}_{\mathcal{P}_{B}} \Big)^{2} & = \int_{\R^{3}} \Big( \sum_{\theta \in \Theta} \sum_{\mathcal{P} \in \mathcal{P}(\theta)} \mathbf{1}_{\mathcal{P}_{B}} \Big)^{2} \notag\\
& \leq \int_{\R^{3}} \Big( \sum_{\theta \in \Theta} \Big| \sum_{\mathcal{P} \in \mathcal{P}(\theta)} e_{\theta} \cdot \varphi_{\mathcal{P}} \Big|^{2} \Big)^{2} \notag\\
&\label{form16} = \int_{\R^{3}} |Sf|^{4} \lesssim \sum_{R^{-1/2} \leq s \leq 1} \sum_{d(\tau) = s} \sum_{U \| U_{\tau}} \mathrm{Leb}(U)^{-1}\|S_{U}f\|_{L^{2}}^{4}. \end{align}
Recall the notation on the right hand side, in particular that $\delta = R^{-1/2} \leq s \leq 1$ only runs over dyadic rationals, and the definition of the "partial" square function $S_{U}f$ from \eqref{form12}. The rectangles $U$ are $\Delta$-plates with $\Delta = (s^{2}R)^{-1/2} = s^{-1}\delta$. In particular, every $U$ is essentially the $\ell^{\ast}$-dual of a Heisenberg $\Delta$-ball: this will allow us to control $\|S_{U}f\|_{L^{2}}$ by applying the non-concentration condition \eqref{form38a} between scales $\delta$ and $1$.

By definition,
\begin{equation}\label{form13} \|S_{U}f\|_{2}^{2} = \int_{U} \sum_{\theta \subset \tau} |f_{\theta}|^{2} = \int_{U} \sum_{\theta \subset \tau} \Big( \sum_{\mathcal{P} \in \mathcal{P}(\theta)} \varphi_{\mathcal{P}} \big)^{2} \lessapprox \int_{U} \sum_{\theta \subset \mathcal{\tau}} \sum_{\mathcal{P} \in \mathcal{P}(\theta)} \varphi_{\mathcal{P}}. \end{equation}
Above, and in the sequel, the notation $A \lessapprox B$ means that for every $\rho > 0$, there exists a constant $C_{\rho} > 0$ such that $A \leq C_{\rho}\delta^{-\rho}B$. In \eqref{form13}, the final "$\lessapprox$" inequality follows easily from the rapid decay of the functions $\varphi_{\mathcal{P}}$, and the bounded overlap of the plates $\mathcal{P} \in \mathcal{P}(\theta)$ for $\theta \in \Theta$ fixed.

For $\theta \subset \tau$, each plate $\mathcal{P} \in \mathcal{P}(\theta)$ is contained in some translate of $10U_{\tau}$ (this was discussed above \eqref{form12}), but this translate may not be $U$. Let $\mathbf{U} \supset U$ be an $(R^{\epsilon}\Delta)$-plate which is concentric with $U$. We then decompose the right hand side of \eqref{form13} as
\begin{equation}\label{form14} \int_{U} \sum_{\theta \subset \mathcal{\tau}} \sum_{\mathcal{P} \in \mathcal{P}(\theta)} \varphi_{\mathcal{P}} \leq \int \sum_{\theta \subset \tau} \mathop{\sum_{\mathcal{P} \in \mathcal{P}(\theta)}}_{\mathcal{P} \subset \mathbf{U}} \varphi_{\mathcal{P}} + \int_{U} \sum_{\theta \subset \tau} \mathop{\sum_{\mathcal{P} \in \mathcal{P}(\theta)}}_{\mathcal{P} \not\subset \mathbf{U}} \varphi_{\mathcal{P}}. \end{equation}
Since each $\mathcal{P} \in \mathcal{P}(\theta)$ is contained in element of the tiling $\mathcal{U}_{\tau}$ (consisting of translates of $U$) every plate $\mathcal{P}(\theta)$ with $\mathcal{P} \not\subset \mathbf{U}$ is far away from $U$: more precisely, $R^{\epsilon/2}\mathcal{P} \cap U = \emptyset$. By the rapid decay of $\varphi_{\mathcal{P}}$ outside $\mathcal{P}$, this implies that $\varphi_{\mathcal{P}} \lesssim_{\epsilon} \delta^{100}$ on $U$, and therefore the second term of \eqref{form14} is bounded by, say, $\lesssim_{\epsilon} \delta^{50}$.

We then focus on the first term of \eqref{form14}, and we first note that
\begin{equation}\label{form15} \int\sum_{\theta \subset \tau} \mathop{\sum_{\mathcal{P} \in \mathcal{P}(\theta)}}_{\mathcal{P} \subset \mathbf{U}} \varphi_{\mathcal{P}} \lesssim \delta^{3} \cdot |\{\mathcal{P} : \mathcal{P} \subset \mathbf{U}\}|, \end{equation}
since $\|\varphi_{\mathcal{P}}\|_{L^{1}} \sim \mathrm{Leb}(\mathcal{P}) \sim \delta^{3}$. So, we need to find out how many $\delta$-plates $\mathcal{P}$ are contained in $\mathbf{U}$. Since $\mathbf{U}$ is an $(R^{\epsilon}\Delta)$-plate, it follows from the second inclusion \eqref{form9}, combined with the second inclusion in Proposition \ref{prop1}, that
\begin{displaymath} \mathbf{U} \subset \ell^{\ast}(B_{\He}(p_{U},C R^{\epsilon}\Delta)) =: \ell^{\ast}(B_{U}). \end{displaymath}
for some $p_{U} \in \He$, and for some absolute constant $C > 0$. On the other hand, the plates $\mathcal{P} = \mathcal{P}_{B}$, $B \in \mathcal{B}$, were initially chosen in such a way that $\ell^{\ast}(B) \cap \{(s,y,z) : |s| \leq 1\} \subset \mathcal{P}_{B}$. Thus, whenever $\mathcal{P}_{B} \subset \mathbf{U}$, we have
\begin{displaymath} \ell^{\ast}(B) \cap \{(s,y,z) : |s| \leq 1\} \subset \mathcal{P}_{B} \subset \mathbf{U} \subset \ell^{\ast}(B_{U}). \end{displaymath}
This implies by Proposition \ref{prop2} that $B \subset B_{U}$, where possibly $B_{U}$ was inflated by another constant factor. Thus,
\begin{displaymath} |\{\mathcal{P} : \mathcal{P} \subset \mathbf{U}\}| \lesssim |\{B \in \mathcal{B} : B \subset B_{U}\}|. \end{displaymath}
Using \eqref{form38a}, this will easily yield useful upper bounds for $|\{\mathcal{P} : \mathcal{P} \subset \mathbf{U}\}|$.

To make this precise, we sort the sets "$U$" appearing in \eqref{form16} according to the "richness"
\begin{equation}\label{form19} \rho(U) := |\{B \in \mathcal{B} : B \subset B_{U}\}| \stackrel{\eqref{form38a}}{\leq} \mathbf{C} \cdot \left( \frac{C R^{\epsilon}\Delta}{\delta} \right)^{3}. \end{equation}
For $s \in [R^{-1/2},1]$ fixed, we choose a (dyadic) value $\rho = \rho_{s}$ such that
\begin{equation}\label{form18} \sum_{d(\tau) = s} \sum_{U \| U_{\tau}} \mathrm{Leb}(U)^{-1}\|S_{U}f\|_{L^{2}}^{4} \lessapprox \sum_{d(\tau) = s} \mathop{\sum_{U \| U_{\tau}}}_{\rho(U) \sim \rho} \mathrm{Leb}(U)^{-1}\|S_{U}f\|_{L^{2}}^{4}. \end{equation}
Here "$\lessapprox$" hides a constant of the form $C\log(1/\delta)$. Let $\mathcal{U}(\rho)$ be the collection of sets "$U$" appearing on the right hand side, and let $\mathcal{B}' \subset \mathcal{B}$ be the subset of the original $\delta$-balls which are contained in some ball $B_{U}$, $U \in \mathcal{U}(\rho)$. Then, evidently,
\begin{equation}\label{form17} |\mathcal{B}'| \lesssim \rho \cdot |\mathcal{U}(\rho)| \lesssim R^{C\epsilon}|\mathcal{B}'|. \end{equation}
The factor "$R^{C\epsilon}$" arises from the fact that while distinct sets "$U$" are the duals of essentially disjoint Heisenberg $\Delta$-balls, the inflated balls $B_{U}$ only have bounded overlap, depending on the inflation factor $R^{\epsilon}$.

Now, for $U \in \mathcal{U}(\rho)$, we may estimate \eqref{form15} as follows:
\begin{displaymath} \|S_{U}f\|_{L^{2}}^{2} \lessapprox_{\epsilon} \int\sum_{\theta \subset \tau} \mathop{\sum_{\mathcal{P} \in \mathcal{P}(\theta)}}_{\mathcal{P} \subset \mathbf{U}} \varphi_{\mathcal{P}} \lesssim \delta^{3} \cdot \rho \lesssim \delta^{3} \cdot R^{C\epsilon} \cdot \frac{|\mathcal{B}'|}{|\mathcal{U}(\rho)|}. \end{displaymath}
(In this estimate, we have omitted the term "$\delta^{50}$" from the second part of \eqref{form14}, because this term will soon turn out to be much smaller than the best bounds for what remains.) Plugging this estimate into \eqref{form18}, and observing that $\mathrm{Leb}(U) = \Delta^{3}$, we obtain
\begin{align*} \sum_{d(\tau) = s} \sum_{U \| U_{\tau}} \mathrm{Leb}(U)^{-1}\|S_{U}f\|_{L^{2}}^{4} & \lessapprox_{\epsilon} |\mathcal{U}(\rho)| \cdot \Delta^{-3} \cdot \Big( \delta^{3} \cdot R^{C\epsilon} \cdot \frac{|\mathcal{B'}|}{|\mathcal{U}(\rho)|} \Big)^{2}\\
& = \Delta^{-3} \cdot \delta^{6} \cdot R^{2C\epsilon} \cdot \frac{|\mathcal{B}'|^{2}}{|\mathcal{U}(\rho)|}\\
& \stackrel{\eqref{form19} \& \eqref{form17}}{\lesssim} \mathbf{C} \cdot R^{3C\epsilon} \cdot \delta^{3}|\mathcal{B}|. \end{align*}
Notably, this estimate is independent of "$\Delta$" and the parameter "$s$", so we may finally deduce from \eqref{form16} that
\begin{displaymath} \int_{\R^{3}} \Big( \sum_{B \in \mathcal{B}} \mathbf{1}_{\mathcal{P}_{B}} \Big)^{2} \lessapprox_{\epsilon} \mathbf{C} \cdot R^{3C\epsilon} \cdot \delta^{3}|\mathcal{B}|. \end{displaymath}
Since $R = \delta^{-2}$ and $\epsilon > 0$ was arbitrary, this implies \eqref{form39a} by renaming variables, and the proof of Proposition \ref{prop5} is complete. \end{proof}

\section{Proof of Theorem \ref{main}}\label{s:mainProof}

We recall the statement:

\begin{thm}\label{mainV2} Let $K \subset \He$ be a Borel set with $\Hd K = t \in [2,3]$. Then, $\dim_{\mathrm{E}} \pi_{e}(K) \geq t - 1$ for $\mathcal{H}^{1}$ almost every $e \in S^{1}$. Consequently, $\Hd \pi_{e}(K) \geq 2t - 3$ for $\mathcal{H}^{1}$ almost every $e \in S^{1}$.
\end{thm}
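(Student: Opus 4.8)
The plan is to prove first the Euclidean bound \eqref{mainEuc}, i.e.\ that $\dim_{\mathrm{E}} \pi_{e}(K) \geq t - 1$ for $\mathcal{H}^{1}$ a.e.\ $e \in S^{1}$, and then to deduce the Heisenberg bound \eqref{mainHeis} from it by an elementary dimension comparison on $\W_{e}$. For the second step I would use the following observation: covering a parabolic ball of radius $r$ in $\W_{e} \cong \R^{2}$ by $\sim r^{-1}$ Euclidean balls of radius $r^{2}$, one sees that if $B \subset \R^{2}$ has $\mathcal{H}^{\beta}_{\mathrm{par}}(B) = 0$ then $\mathcal{H}^{(\beta+1)/2}_{\mathrm{Euc}}(B) = 0$, hence $\dim_{\mathrm{par}} B \geq 2 \dim_{\mathrm{E}} B - 1$. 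Since $d_{\He}$ restricted to $\W_{e}$ is bilipschitz to the parabolic metric, and $\dim_{\mathrm{E}} \pi_{e}(K) \geq t - 1 \geq 1$, this gives $\Hd \pi_{e}(K) \geq 2 \dim_{\mathrm{E}} \pi_{e}(K) - 1 \geq 2(t-1) - 1 = 2t - 3$ (which is indeed $\leq 3$ for $t \leq 3$, consistently with $\W_{e}$ being $3$-regular).

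For \eqref{mainEuc} I would argue by contradiction, reducing to the $\delta$-discretised Theorem \ref{mainDiscrete}. Suppose the exceptional set $\{e : \dim_{\mathrm{E}} \pi_{e}(K) < t - 1\}$ has positive length. Replacing $K$ by a compact subset of dimension arbitrarily close to $t$, and pigeonholing, we may fix a rational $s < t - 1$ and a compact set $S \subset S^{1}$ of positive length with $\dim_{\mathrm{E}} \pi_{e}(K) \leq s$ for every $e \in S$. Fix $t' \in (s+1, t)$ and then $\sigma \in (1+s, t')$; these choices depend only on $s, t$, and in particular fix the parameter $\epsilon = \epsilon(s, t) > 0$ supplied by Theorem \ref{mainDiscrete}. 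Use Frostman's lemma to produce a probability measure $\mu$ on $K$ with $\mu(B_{\He}(x,r)) \lesssim r^{t'}$, normalised so that $\spt \mu \subset B_{\He}(1)$. For a small scale $\delta > 0$ yet to be chosen, uniformise $\mu$ at scale $\delta$ (restrict to a level set of $x \mapsto \mu(B_{\He}(x,\delta))$, losing only a $\log(1/\delta)$ factor) to obtain a $(\delta, t', \delta^{-\epsilon})$-set $\mathcal{B}$ of $\delta$-balls with $\delta$-separated centres, contained in $B_{\He}(1)$, and satisfying the key inclusion $\cup \mathcal{B} \subset N^{\mathrm{Euc}}_{C\delta}(K)$ (a Heisenberg $\delta$-ball has Euclidean diameter $\lesssim \delta$). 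Feeding $\mathcal{B}$, the pair $(\sigma, t')$, and the direction set $S$ (of length $> \delta^{\epsilon}$ once $\delta$ is small) into Theorem \ref{mainDiscrete}, and taking $\mathcal{B}' = \mathcal{B}$, we obtain some $e \in S$ with $\mathrm{Leb}(\pi_{e}(\cup \mathcal{B})) \geq \delta^{3-\sigma}$. On the other hand $\pi_{e}$ is locally Lipschitz for the Euclidean metric, so $\pi_{e}(\cup \mathcal{B}) \subset N^{\mathrm{Euc}}_{C'\delta}(\pi_{e}(K))$, and the assumption $\dim_{\mathrm{E}} \pi_{e}(K) \leq s$ would give (modulo the subtlety discussed next) $\mathrm{Leb}(N^{\mathrm{Euc}}_{C'\delta}(\pi_{e}(K))) \lesssim \delta^{2-s}$. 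Since $3 - \sigma < 2 - s$, these bounds are incompatible for $\delta$ small, which is the desired contradiction.

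The step I expect to be the main obstacle is the very last one: securing a \emph{single} scale $\delta$ at which $\mathrm{Leb}(N^{\mathrm{Euc}}_{C'\delta}(\pi_{e}(K))) \lesssim \delta^{2-s}$ holds \emph{simultaneously for all $e \in S$}. The bound $\dim_{\mathrm{E}} \pi_{e}(K) \leq s$ alone does not control the metric entropy of $\pi_{e}(K)$ at a prescribed scale, since the upper box dimension of $\pi_{e}(K)$ may be much larger; one must therefore work with the full multiscale structure -- for instance, apply an Egorov-type uniformisation to the $\delta$-truncated Hausdorff contents $\mathcal{H}^{s'}_{\delta}(\pi_{e}(K))$, $s < s' < t-1$, over $e \in S$, or equivalently uniformise the branching of $\mu$ across the dyadic scales between $\delta$ and $1$, and only then extract the working scale. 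This is delicate, and the same reasoning genuinely fails at the sharp target exponent $t$ (which is why Conjecture \eqref{mainConj} remains open for $t \in (2,3)$); it is precisely the failure of $\pi_{e} \colon (\He, d_{\He}) \to (\W_{e}, d_{\He})$ to be Lipschitz that forces one to run the argument through the Euclidean metric on the target, i.e.\ through \eqref{mainEuc} rather than directly through $\Hd \pi_{e}(K)$. Everything downstream I would treat as given: the excerpt already reduces Theorem \ref{mainDiscrete} to Theorem \ref{mainTechnical} via the random-augmentation device (Proposition \ref{prop:aug} and Lemma \ref{lemma1}), and Theorem \ref{mainTechnical} to the Guth--Wang--Zhang Kakeya inequality \eqref{GWZ} through the point-line duality of Section \ref{s:duality}.
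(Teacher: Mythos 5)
Your reduction of the Heisenberg bound to the Euclidean one via the parabolic/Euclidean dimension comparison on $\W_{e}$ is correct and is exactly what the paper does (it cites \cite[Theorem 2.8]{MR3047423} for the inequality you rederive by the covering argument). Your overall strategy for the Euclidean bound -- argue by contradiction, Frostman, uniformise at scale $\delta$ to produce a $(\delta,t',\delta^{-\epsilon})$-set $\mathcal{B}$, and feed it into Theorem \ref{mainDiscrete} -- is also the paper's strategy.

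The genuine gap is the one you flag yourself, and I don't think the fix you sketch is the right one. You apply Theorem \ref{mainDiscrete} with $\mathcal{B}'=\mathcal{B}$, and then need $\mathrm{Leb}(\pi_{e}(\cup\mathcal{B}))\lesssim\delta^{2-s}$ for \emph{all} $e\in S$ at the one chosen scale $\delta$. But $\dim_{\mathrm{E}}\pi_{e}(K)\le s$ is a Hausdorff-dimension hypothesis, not a box-dimension one: a small-Hausdorff-content cover of $\pi_{e}(K)$ consists of balls of wildly different radii, and even after an Egorov-type uniformisation of $\mathcal{H}^{s'}_{\delta}(\pi_{e}(K))$ over $e\in S$, you cannot convert it into a \emph{single-scale} $\delta$-disc cover of the whole set $\pi_{e}(K)$. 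The issue is not that the scale $\delta$ varies with $e$ (which Egorov handles); it is that for a fixed $e$ the cover is intrinsically multi-scale, so bounding $\mathrm{Leb}(N^{\mathrm{Euc}}_{C'\delta}(\pi_{e}(K)))$ is out of reach. This is precisely why Theorem \ref{mainDiscrete} was stated with the seemingly awkward quantifier ``for any sub-family $\mathcal{B}'\subset\mathcal{B}$ with $|\mathcal{B}'|\ge\delta^{\epsilon}|\mathcal{B}|$.''

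The paper's fix is to \emph{not} cover all of $\pi_{e}(K)$. Instead, it pigeonholes (over scales and over $e$) to find, at a single $\delta$ and for each $e$ in a set $S$ of length $\ge\delta^{\epsilon}$, a collection $\mathcal{W}_{e}$ of at most $\delta^{1-s}$ Euclidean $\delta$-discs in $\W_{e}$ with $\mu(\pi_{e}^{-1}(W_{e}))\ge\delta^{\epsilon/2}$ -- so the discs capture a definite $\mu$-mass, but they need not cover $\pi_{e}(K)$. This is the standard Hausdorff-content reduction: by pigeonholing the radii of a near-optimal cover, some single scale contributes a positive fraction of the mass, and the number of balls at that scale is controlled by the content bound. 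After a further pigeonhole on $\alpha=\mu(B_{\He}(p,\delta))$ to uniformise the measure, one builds $\mathcal{B}$ as a Vitali cover of the level set $K_{\alpha}$ and defines the $e$-\emph{dependent} sub-family $\mathcal{B}_{e}:=\{B\in\mathcal{B}:\pi_{e}^{-1}(W_{e})\cap B\neq\emptyset\}$, which has $|\mathcal{B}_{e}|\gtrsim\delta^{\epsilon}|\mathcal{B}|$ by the mass bound. Because the $\mathcal{W}_{e}$ are Euclidean $\delta$-discs and $\pi_{e}$ has Euclidean Lipschitz bounds on bounded sets, each $B\in\mathcal{B}_{e}$ satisfies $\pi_{e}(B)\subset 2D$ for some $D\in\mathcal{W}_{e}$, giving $\mathrm{Leb}(\pi_{e}(\cup\mathcal{B}_{e}))\lesssim\delta^{2}\cdot|\mathcal{W}_{e}|\le\delta^{3-s}$ for every $e\in S$, contradicting Theorem \ref{mainDiscrete} applied with $\mathcal{B}'=\mathcal{B}_{e}$. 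In short: replace the uniform upper bound on $\mathrm{Leb}(\pi_{e}(\cup\mathcal{B}))$ (not available) by a uniform upper bound on $\mathrm{Leb}(\pi_{e}(\cup\mathcal{B}_{e}))$ for $e$-dependent dense sub-families $\mathcal{B}_{e}$, which is exactly what the sub-family clause in Theorem \ref{mainDiscrete} is designed to absorb.
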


\begin{proof} The lower bound for $\Hd \pi_{e}(K)$ follows immediately from the lower bound for $\dim_{\mathrm{E}}(K)$, combined with a general inequality between Hausdorff dimensions relative to Euclidean and Heisenberg metrics of subsets of $\W_{e}$, see \cite[Theorem 2.8]{MR3047423}. So, we focus on proving that $\dim_{\mathrm{E}}(K) \geq t - 1$ for $\mathcal{H}^{1}$ almost every $e \in S^{1}$.

The first steps of the proof are standard; similar arguments have
appeared, for example the deduction of \cite[Theorem 2]{MR4148151} from \cite[Theorem
1]{MR4148151}. So we only sketch the first part of the proof, and
provide full details where they are non-standard.
First, we may assume that $K \subset
B_{\He}(1)$, and we may assume, applying Frostman's lemma, that $K
= \spt(\mu)$ for some Borel probability measure $\mu$ satisfying
$\mu(B_{\He}(p,r)) \lesssim r^{t}$ for all $p \in \He$ and $r >
0$.

We make the counter assumption that there exists $s \in (1,t)$ such that
\begin{displaymath} \mathcal{H}^{1}(\{e \in S^{1} : \dim_{\mathrm{E}} \pi_{e}(K) \leq s - 1\}) > 0. \end{displaymath}
By several applications of the pigeonhole principle, this assumption can be applied to find the following objects for any $\epsilon > 0$, and for arbitrarily small $\delta > 0$:
\begin{enumerate}
\item A Borel subset $S' \subset S^{1}$ of length $\mathcal{H}^{1}(S') \geq \delta^{\epsilon/2}$.
\item For every $e \in S'$ a collection of $\leq \delta^{1 - s}$ Euclidean $\delta$-discs $\mathcal{W}_{e}$, contained in $\mathbb{W}_{e}$.
\item If $W_{e} := \cup \mathcal{W}_{e}$ and $e \in S'$, then
\begin{equation}\label{form43} \mu(\pi_{e}^{-1}(W_{e})) \geq \delta^{\epsilon/2}. \end{equation}
\end{enumerate}
We claim that (1)-(3) violate Theorem \ref{mainDiscrete} if $\delta,\epsilon > 0$ are small enough. To this end, we first need to construct a relevant $(\delta,t,\delta^{-\epsilon})$-set of (Heisenberg) $\delta$-balls $\mathcal{B}$ contained in $B_{\He}(1)$. Morally, this collection is a $\delta$-approximation of $K = \spt(\mu)$. More precisely,
we need to decompose $K$ to the following subsets:
\begin{displaymath} K_{\alpha} := \{p \in K : \tfrac{\alpha}{2} \leq \mu(B_{\He}(p,\delta)) \leq \alpha\}, \end{displaymath}
where $\alpha > 0$ runs over dyadic rationals with $\alpha \lesssim \delta^{t}$. By one final application of the pigeonhole principle, and recalling \eqref{form43}, one can find a fixed index $\alpha \in 2^{-\N}$ such that
\begin{equation}\label{form44} \mu(\pi_{e}^{-1}(W_{e}) \cap K_{\alpha}) \geq \delta^{\epsilon} \end{equation}
for all $e \in S \subset S'$, where $\mathcal{H}^{1}(S) \geq \delta^{\epsilon}$. In particular, $\mu(K_{\alpha}) \geq \delta^{\epsilon}$. Then, we let $\mathcal{B}$ be a (Vitali) cover of $K_{\alpha}$ by finitely overlapping Heisenberg $\delta$-balls with $(\delta/5)$-separated centres. Note that $\delta^{\epsilon}\alpha^{-1} \lesssim |\mathcal{B}| \lesssim \alpha^{-1}$. Using the definition of $K_{\alpha}$, and the Frostman condition for $\mu$, it is now easy to check that $\mathcal{B}$ is a $(\delta,t,C\delta^{-\epsilon})$-set of $\delta$-balls, where $C$ is roughly the Frostman constant of $\mu$.

Finally, from \eqref{form44} and $\alpha \lesssim |\mathcal{B}|^{-1}$, we deduce that if $e \in S$, then $\pi_{e}^{-1}(W_{e})$ intersects $\gtrsim \delta^{\epsilon}|\mathcal{B}|$ elements of $\mathcal{B}$, since
\begin{displaymath} \delta^{\epsilon} \leq \mu(\pi_{e}^{-1}(W_{e}) \cap K_{\alpha}) \leq \alpha \cdot |\{B \in \mathcal{B} : \pi_{e}^{-1}(W_{e}) \cap B \neq \emptyset\}|, \qquad e \in S. \end{displaymath}
Write $\mathcal{B}_{e} := \{B \in \mathcal{B} : \pi_{e}^{-1}(W_{e}) \cap B \neq \emptyset\}$, thus $|\mathcal{B}_{e}| \gtrsim \delta^{\epsilon}|\mathcal{B}|$. We now arrive at the point where it is crucial that the elements of $\mathcal{W}_{e}$ are Euclidean $\delta$-discs. Namely, if $B \in \mathcal{B}_{e}$, then $\pi_{e}^{-1}(D) \cap B \neq \emptyset$ for some $D \in \mathcal{W}_{e}$. Then, because $D$ is a Euclidean $\delta$-disc, and the Euclidean diameter of $\pi_{e}(B)$ is $\lesssim \delta$, we may conclude that $\pi_{e}(B) \subset 2D$. This could seriously fail if $D$ were a disc in the metric $d_{\He}$. Now, however, we see that
\begin{displaymath} \pi_{e}(\cup \mathcal{B}_{e}) \subset \cup \{2D : D \in \mathcal{W}_{e}\}, \end{displaymath}
and in particular $\mathrm{Leb}(\pi_{e}(\cup \mathcal{B}_{e})) \lesssim \delta^{2} \cdot |\mathcal{W}_{e}| \leq \delta^{3 - s}$ for all $e \in S$. This violates the conclusion of Theorem \ref{mainDiscrete}, and the proof of Theorem \ref{mainV2} is complete. \end{proof}

\appendix

\section{Completing $(\delta,t)$-sets to $(\delta,3)$-sets}

In this section, we use the following notation for the \emph{$\delta$-truncated $s$-dimensional Riesz energy} of a Radon measure $\nu$ on $\He$:
\begin{displaymath} I_{s}^{\delta}(\nu) := \iint \frac{d\nu(x)\,d\nu(y)}{d_{\He,\delta}(x,y)^{s + t}}, \end{displaymath}
where $d_{\He,\delta}(x,y) := \max\{d_{\He}(x,y),\delta\}$. We also recall that $\mu \ast_{\He} \nu$ is the Heisenberg convolution of $\mu$ and $\nu$, that is, the push-forward of $\mu \times \nu$ under the group operation $(p,q) \to p \cdot q$.

\begin{proposition}\label{prop:aug} Let $0 \leq s,t \leq 3$ with $s + t \leq 3$, and let $\delta \in (0,\tfrac{1}{2}]$. Let $\mu$ be a Borel probability measure on $B_{\He}(1)$ with $I^{\delta}_{t}(\mu) \leq \mathbf{C}$.  Then, there exists a set $H \subset B_{\He}(1)$ with $|H| \leq \delta^{-s}$ such that the uniformly distributed (discrete) measure $\eta$ on $H$ satisfies
\begin{displaymath} I_{s + t}^{\delta}(\eta \ast \mu) \leq \mathbf{C}', \end{displaymath}
where $\mathbf{C}' \leq C\log(1/\delta)^{C} \cdot \mathbf{C}$ for some absolute constant $C > 0$.
\end{proposition}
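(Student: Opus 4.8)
The plan is a first-moment (random-sampling) argument: I would construct $H$ at random and estimate the expected energy. Let $\nu$ be normalised Lebesgue measure on $B_{\He}(1)$, set $N := \lfloor \delta^{-s} \rfloor$, draw $p_{1},\dots,p_{N}$ i.i.d.\ from $\nu$, and put $H := \{p_{1},\dots,p_{N}\}$, so that $|H| \le N \le \delta^{-s}$ (and a.s.\ $|H| = N$, since $\nu$ is non-atomic). Let $\eta$ be the uniform probability measure on $H$. The goal is to prove
\begin{displaymath} \mathbb{E}\big[ I_{s+t}^{\delta}(\eta \ast_{\He} \mu) \big] \lesssim \mathbf{C}, \end{displaymath}
after which Markov's inequality produces a deterministic choice of $H$ with $I_{s+t}^{\delta}(\eta \ast_{\He} \mu) \lesssim \mathbf{C}$, which is the assertion.

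The computation rests on the identity $\eta \ast_{\He} \mu = \tfrac{1}{N}\sum_{i=1}^{N} p_{i}\cdot\mu$, where $p\cdot\mu$ is the push-forward of $\mu$ under the left translation $q \mapsto p\cdot q$. Expanding the double integral defining $I_{s+t}^{\delta}$ and using linearity of expectation splits matters into diagonal terms $\mathcal{E}_{i,i}$ and off-diagonal terms $\mathcal{E}_{i,j}$ ($i\neq j$), where $\mathcal{E}_{i,j} := \iint d_{\He,\delta}(x,y)^{-(s+t)}\, d(p_{i}\cdot\mu)(x)\, d(p_{j}\cdot\mu)(y)$. For the diagonal terms, left-invariance of $d_{\He}$ gives $\mathcal{E}_{i,i} = I_{s+t}^{\delta}(\mu)$ identically, and since $d_{\He,\delta}\geq\delta$ one has $I_{s+t}^{\delta}(\mu) \le \delta^{-s}I_{t}^{\delta}(\mu) \le \delta^{-s}\mathbf{C}$; as $N \ge \delta^{-s}/2$, the diagonal contributes $\le \tfrac{1}{N}\delta^{-s}\mathbf{C} \lesssim \mathbf{C}$.

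For the off-diagonal terms, independence of $p_{i}$ and $p_{j}$ turns $\mathbb{E}\,\mathcal{E}_{i,j}$ into $\iint \big( \iint d_{\He,\delta}(p\cdot x', q\cdot y')^{-(s+t)}\, d\nu(p)\,d\nu(q) \big)\, d\mu(x')\,d\mu(y')$. Here I would use that $\He$ is unimodular: for fixed $x'$ the law of $p\cdot x'$ under $p\sim\nu$ is normalised Lebesgue on $B_{\He}(1)\cdot x' \subset B_{\He}(2)$, so the inner integral is at most $\mathrm{Leb}(B_{\He}(1))^{-2}\iint_{B_{\He}(2)\times B_{\He}(2)} d_{\He,\delta}(u,v)^{-(s+t)}\,du\,dv$. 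Since $(\He,d_{\He})$ is Ahlfors $4$-regular and $s+t \le 3 < 4$, a layer-cake estimate bounds $\sup_{u}\int_{B_{\He}(2)} d_{\He,\delta}(u,v)^{-(s+t)}\,dv$ by $\delta^{-(s+t)}\cdot\delta^{4} + \int_{\delta}^{2}\lambda^{3-(s+t)}\,d\lambda \lesssim 1$; it is precisely here that $s+t \le 3$ enters, keeping the near-diagonal term $\delta^{4-(s+t)}$ bounded. Hence $\mathbb{E}\,\mathcal{E}_{i,j} \lesssim 1$, so the off-diagonal part contributes $\lesssim 1 \lesssim \mathbf{C}$ (note $\mathbf{C} \ge I_{t}^{\delta}(\mu) \ge 2^{-t}\mu(B_{\He}(1))^{2} = 2^{-t} \ge 2^{-3}$, since $d_{\He,\delta} \le 2$ on $B_{\He}(1)$).

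I do not expect a genuine obstacle: this is standard random sampling plus a routine kernel estimate. The two points that require mild care are the layer-cake bound for the $\delta$-truncated Riesz kernel on the $4$-regular space $\He$ — specifically checking that the near-diagonal contribution $\delta^{4-(s+t)}$ stays $\lesssim 1$, which is where $s+t \le 3$ is used — and the unimodularity of $\He$, so that right translations (not only left translations) preserve Lebesgue measure. As a bonus, this route yields the sharper bound $\mathbf{C}' \lesssim \mathbf{C}$, with no logarithmic loss over $\mathbf{C}$.
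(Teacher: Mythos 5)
Your proof is correct, and it is the same first-moment random-sampling strategy as the paper's, but with a genuinely different (and cleaner) implementation of the randomness. The paper builds $H$ as a Bernoulli random subset of the Euclidean $\delta$-lattice $Z = \delta\Z^{3}\cap B_{\He}(1)$, including each point independently with probability $\sim\delta^{3-s}$, and estimates the expected energy by counting lattice points in Euclidean $r$-balls; at $s+t=3$ that lattice count produces a dyadic sum $\sum_{\delta\le r\le 1} r^{3-(s+t)}\sim\log(1/\delta)$, which is precisely the source of the $\log(1/\delta)^{C}$ factor in the statement. You instead draw $N=\lfloor\delta^{-s}\rfloor$ i.i.d.\ points from normalised Lebesgue measure on $B_{\He}(1)$, split into diagonal and off-diagonal terms (matching the paper's $\Sigma_{1}$ and $\Sigma_{2}$), and for the off-diagonal term use unimodularity (right translations preserve Lebesgue) to change variables $u=p\cdot x'$, $v=q\cdot y'$, reducing everything to $\sup_{u}\int_{B_{\He}(2)}d_{\He,\delta}(u,v)^{-(s+t)}\,dv\lesssim 1$, which follows from $4$-regularity of $(\He,d_{\He})$ and $s+t\le 3$. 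This sidesteps the paper's discrete count-in-a-Euclidean-ball inefficiency entirely (and the non-commutativity caveat the paper discusses, since the change of variables absorbs the right translation), and accordingly you get $\mathbf{C}'\lesssim\mathbf{C}$ with no logarithmic loss — strictly stronger than what is claimed. Your diagonal estimate $I_{s+t}^{\delta}(\mu)\le\delta^{-s}I_{t}^{\delta}(\mu)$ and the observation that $\mathbf{C}\ge I_{t}^{\delta}(\mu)\gtrsim 1$ (so that $1\lesssim\mathbf{C}$) are both used implicitly in the paper as well. In short: same idea, cleaner execution, slightly sharper conclusion.
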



\begin{proof} Let $Z := \delta \cdot \Z^{3} \cap B_{\He}(1)$ be a grid of Euclidean $\delta$-separated lattice points in $B_{\He}(1)$. Then $|Z| \sim \delta^{-3}$. Let $H_{\omega} \subset Z$ be a random set, where each point of $Z$ is included independently with probability $\delta^{-s}/(2|Z|)$. In particular, $\mathbb{E}_{\omega}|H_{\omega}| = \delta^{-s}/2$. While we use the symbol "$\omega$" to index the elements in the underlying probability space, no explicit reference to this space will be needed. Let $\eta_{\omega}$ be the random measure
\begin{displaymath} \eta_{\omega} := \delta^{s} \sum_{p \in H_{\omega}} \delta_{p} = \delta^{s} \sum_{p \in Z} \mathbf{1}_{H_{\omega}}(p) \cdot \delta_{p}. \end{displaymath}
We claim that
\begin{equation}\label{form31} \mathbb{E}_{\omega} \left( I^{\delta}_{s + t}(\eta_{\omega} \ast_{\He} \mu) \right) = \iint \E_{\omega} \iint \frac{d\eta_{\omega}(p) d\eta_{\omega}(q)}{d_{\He,\delta}(p \cdot x,q \cdot y)^{s + t}} \, d\mu(x) \, d\mu(y) \leq \mathbf{C}'. \end{equation}
for some $\mathbf{C}' \lessapprox \mathbf{C}$. In this argument, the notation "$\lessapprox$" hides a constant of the form $C\log(1/\delta)^{C}$. The inequality \eqref{form31} will complete the proof of the proposition, because $|H_{\omega}| \leq \delta^{-s}$ with probability $\geq \tfrac{1}{2}$ (for $\delta > 0$ small enough), and therefore, by Chebychev's inequality, $I_{s + t}^{\delta}(\eta_{\omega} \ast_{\He} \mu) \lesssim \mathbf{C}'$ for some "$\omega$" with $|H_{\omega}| \leq \delta^{-s}$.

To prove \eqref{form31}, it clearly suffices to establish that
\begin{equation}\label{form29} \E_{\omega} \iint \frac{d\eta_{\omega}(p) d\eta_{\omega}(q)}{d_{\He,\delta}(p \cdot x,q \cdot y)^{s + t}} \lessapprox \frac{1}{d_{\He,\delta}(x \cdot y)^{t}}, \qquad x,y \in \spt(\mu) \subset B_{\He}(1). \end{equation}
By definition of $\eta_{\omega}$, we have
\begin{align*} \iint \frac{d\eta_{\omega}(p) d\eta_{\omega}(q)}{d_{\He,\delta}(p \cdot x,q \cdot y)^{s + t}}
& = \delta^{2s} \sum_{p,q \in Z} \frac{\mathbf{1}_{H_{\omega}}(p)\mathbf{1}_{H_{\omega}}(q)}{d_{\He,\delta}(p \cdot x,q \cdot y)^{s + t}}\\
& = \delta^{2s} \sum_{p \in Z}
\frac{\mathbf{1}_{H_{\omega}}(p)}{d_{\He,\delta}(x,y)^{s + t}} +
\delta^{2s} \sum_{\substack{p,q\in Z\\p \neq q}}
\frac{\mathbf{1}_{H_{\omega}(p)}\mathbf{1}_{H_{\omega}(q)}}{d_{\He,\delta}(p
\cdot x,q \cdot y)^{s + t}} =: \Sigma_{1}(\omega) +
\Sigma_{2}(\omega). \end{align*} We consider the expectations of
$\Sigma_{1}(\omega)$ and $\Sigma_{2}(\omega)$ separately. The
former one is simple, using that
$\E_{\omega}(\mathbf{1}_{H_{\omega}}(p)) = \tn_{\omega}\{p \in
H_{\omega}\} = \delta^{-s}/(2|Z|) \sim \delta^{3 - s}$:
\begin{displaymath} \E_{\omega} \Sigma_{1}(\omega) \sim \delta^{2s} \sum_{p \in Z} \frac{\delta^{3 - s}}{d_{\He,\delta}(x,y)^{s + t}} = \frac{|Z| \cdot \delta^{3 + s}}{d_{\He,\delta}(x,y)^{s + t}} \lesssim \frac{\delta^{s}}{d_{\He,\delta}(x,y)^{s + t}} \leq \frac{1}{d_{\He,\delta}(x,y)^{t}}, \end{displaymath}
recalling that $|Z| \lesssim \delta^{-3}$. To handle the expectation of $\Sigma_{2}(\omega)$, we note that $\{p \in H_{\omega}\}$ and $\{q \in H_{\omega}\}$ are independent events for $p \neq q$, hence
\begin{displaymath} \E_{\omega}\Sigma_{2}(\omega) \sim \delta^{2s} \sum_{\substack{p,q\in Z\\p \neq q}} \frac{\delta^{6 - 2s}}{d_{\He,\delta}(p \cdot x,q \cdot y)^{s + t}} \sim \delta^{6} \sum_{p \in Z} \sum_{\delta \leq r \leq 1} r^{-s - t} |\{q \in Z : d_{\He,\delta}(p \cdot x,q \cdot y) \sim r\}|, \end{displaymath}
where "$r$" runs over dyadic rationals. Since the product "$\cdot$" is not commutative, in general $d_{\He,\delta}(p \cdot x,q \cdot y) \neq d_{\He,\delta}(p \cdot x \cdot y^{-1},q)$, so the set $\{q \in Z : d_{\He,\delta}(p \cdot x,q \cdot y) \sim r\}$ is \textbf{not} contained in a $\He$-ball of radius $\sim r$ around $p \cdot x \cdot y^{-1}$. This is the key inefficiency in the argument, and causes the restriction $s + t \leq 3$: under this restriction, it actually suffices to note that $\{q \in Z : d_{\He,\delta}(p \cdot x,q \cdot y) \sim r\}$ is contained in a Euclidean $Cr$-ball. To see this, note that if $q \in Z$ satisfies $d_{\He,\delta}(p \cdot x,q \cdot y) \lesssim r$ with $r \geq \delta$, then
\begin{displaymath} q \in B_{\He}(p \cdot x,Cr) \cdot y^{-1}. \end{displaymath}
Here $B_{\He}(p \cdot x,Cr)$ is contained in a Euclidean ball of radius $\lesssim r$ (using $r \leq 1$). The same remains true after the right translation by $y^{-1}$, because $|y| \lesssim 1$ (by assumption), and the right translation $z \mapsto z \cdot y^{-1}$ is Euclidean Lipschitz with constant depending only on $|y|$.

Now, since a Euclidean $r$-ball contains $\lesssim (r/\delta)^{3}$ points of $Z$, we see that
\begin{displaymath} \E_{\omega}\Sigma_{2}(\omega) \lesssim \delta^{3} \sum_{p \in Z} \sum_{\delta \leq r \leq 1} r^{3 - s - t} \lessapprox 1 \leq \frac{1}{d_{\He,\delta}(x,y)^{s + t}}, \end{displaymath}
where in the final inequality we used again that $x,y \in
\mathrm{spt}(\mu) \subset B_{\He}(1)$. This completes the proof of
\eqref{form29}, and therefore the proof of the proposition.
\end{proof}

\bibliographystyle{plain}
\bibliography{references}
\end{document}